\theoremstyle{plain} \newtheorem*{theorem*}{Theorem}
\newtheorem{theorem}{Theorem}[section] \newtheorem{lemma}[theorem]{Lemma}
\newtheorem{cor}[theorem]{Corollary}
\theoremstyle{definition}
\newtheorem{definition}[theorem]{Definition}
\theoremstyle{remark}
\newtheorem{rem}[theorem]{Remark}
\numberwithin{equation}{section}
\newcommand{\del}{\partial}
\newcommand{\delbar}{\bar{\del}}
\newcommand{\R}{\mathbb{R}}
\newcommand{\C}{\mathbb{ C}}
\newcommand{\Z}{\mathbb{ Z}}
\newcommand{\W}{\mathcal{W}}
\renewcommand{\H}{\mathbb{ H}}
\renewcommand{\P}{\mathbb{ P}}
\newcommand{\HP}{\H\P}
\newcommand{\dbar}{{\bar{\partial}}}
\newcommand{\invers}{^{-1}}
\DeclareMathOperator{\End}{End}
\DeclareMathOperator{\Hom}{Hom}
\DeclareMathOperator{\Harm}{Harm}
\DeclareMathOperator{\ord}{ord}
\DeclareMathOperator{\ev}{ev}
\DeclareMathOperator{\rank}{rank}
\DeclareMathOperator{\Spec}{Spec}
\newcommand{\Specs}{{\widetilde{\Spec}}}
\newcommand{\T}{{\mathcal T}}
\begin{document}
\title{Conformal maps from a $2$--torus to the $4$--sphere}
\date{\today}
\author{Christoph Bohle}
\author{Katrin Leschke}
\author{Franz Pedit}
\author{Ulrich Pinkall}

\address{Christoph Bohle\\
  Institut f\"ur Mathematik\\
  Technische Universit\"at Berlin\\
  Strasse des 17.~Juni 136\\
  10623 Berlin, Germany}

\address{Katrin Leschke\\
Department of Mathematics\\
 University of Leicester\\
 Leicester, LE1 7RH, UK 
}

\address{Franz Pedit\\
  Mathematisches Institut der  Universit{\"a}t T\"ubingen\\
  Auf der Morgenstelle 10\\
  72076 T\"ubingen\\
  Germany\\
  and \\
  Department of Mathematics \\
  University of Massachusetts\\
  Amherst, MA 01003, USA}

\address{Ulrich Pinkall\\
  Institut f\"ur Mathematik\\
  Technische Universit\"at Berlin\\
  Strasse des 17.~Juni 136\\
  10623 Berlin, Germany}


\thanks{First, third, and forth author supported by DFG SPP 1154 ``Global
  Differential Geometry''. Third author additionally supported by Alexander
  von Humboldt Foundation.}

\maketitle

\section{Introduction}\label{sec:introduction}

Much of differential geometry, as we know it today, has its origins in
the theory of surfaces. Initially, this theory had been local,
example oriented, and focused on special surface classes in $\R^3$.
This changed in the middle of the twentieth century when global
questions about surfaces in $\R^3$ were considered, e.g., do there
exists complete or compact surfaces in $\R^3$ with special
properties such as constant mean or Gaussian curvatures? The
integrability conditions for a surface in $\R^3$ are a system of
non--linear partial differential equations and those questions led to
significant advances in the global analysis of geometrically defined
partial differential equations. Aside from minimal surfaces, whose
analysis is essentially governed by the Cauchy--Riemann equation,
significant progress has been made in the description of (non--zero)
constant mean curvature surfaces and Willmore surfaces. These are
solutions to the elliptic variational problems for area under
constrained volume, and for the bending, or Willmore, energy $\int
H^2$ given by averaging the mean curvature square over the surface.

For constant mean curvature surfaces the combination of non--linear elliptic
analysis and methods from integrable system theory goes some way towards
describing the moduli spaces of such surfaces of a given topology. A
particularly fortuitous situation occurs for constant mean curvature tori:
those are the periodic orbits of an algebraically completely integrable system
whose phase space is the universal Jacobian over an appropriate moduli of
compact Riemann surfaces \cite{PS89,Hi,Bob}.

The analysis of Willmore surfaces turns out to be more complicated since the
integrability condition is a system of non--linear fourth order partial
differential equations. Results are less complete and details not all worked
out when compared to the constant mean curvature case. Nevertheless, Willmore
tori not M\"obius congruent to Euclidean minimal surfaces are still given by
an algebraically completely integrable system \cite{S02,cw_tori}.

That surfaces of genus one should play such a prominent role can
be seen in a number of ways. A constant mean curvature surface, and also a
Willmore surface, can be described by a circle family of flat
${\bf SU}(2)$ respectively ${\bf Sp}(2)$--connections.
Complexifying, one obtains a meromorphic  $\P^1$--family of ${\bf
Sl}(2,\C)$ respectively ${\bf Sp}(2,\C)$--connections with simple
poles at $0$ and $\infty$ in $\P^1$. On a  surface of genus one
the holonomy representation of this family of flat connections,
with respect to a chosen base point on the torus, is abelian. The
characteristic polynomial of any of the generators of the holonomy
thus gives rise to the same Riemann surface, the {\em spectral curve},
 which is a double, respectively
quadruple, branched cover over $\C_{*}=\P^1\setminus\{0,\infty\}$.
The ellipticity of the two variational problems allows to compactify
the spectral curve \cite{Hi,cw_tori} 
to a double, respectively quadruple, branched
cover of the Riemann sphere $\P^1$. The spectral curve has a real
structure induced by the complexification of the circle family to
a $\C_{*}$--family of flat connections. 

The eigenlines of the $\C_{*}$--family of holonomies extend to a holomorphic
line bundle, the {\em eigenline bundle}, over the compactified spectral curve.
Changing the base point on the torus, with respect to which the holonomy
representations were computed, the $\C_{*}$--family of holonomies gets
conjugated and thus the spectral curve is left unchanged.  What does change is
the holomorphic isomorphism type of the eigenline bundle due to contributions
from the simple poles at $0$ and $\infty$ of the $\C_{*}$--family of flat
connections. In fact, the eigenline bundles for the various base points on the
torus describe a real 2--dimensional subtorus in the Jacobian of the spectral
curve. It turns out that this subtorus is tangent to the real part of the Abel
image of the spectral curve inside the Jacobian at the origin.

What we have just described is known in the mathematical physics literature as
finite gap integration. The integrability conditions for constant mean
curvature and Willmore tori in $\R^3$, the zero--curvature equations of the
corresponding families of flat connections, are the elliptic affine Toda field
equations for the groups ${\bf SU}(2)$ --- the elliptic sinh--Gordon equation
--- and ${\bf Sp}(2)$ respectively \cite{S4Paper, DurhamBoys}. Such equations,
often referred to as soliton equations, are studied in mathematical physics
and are known to have an infinite hierarchy of commuting flows. The crucial
observation here is that at a doubly periodic solution those flows span a
finite dimensional space \cite{PS89, BFPP}, which turns out to be the Jacobian
of our previous description. The osculating flag to the Abel image of the
spectral curve inside the Jacobian at the origin describes the hierarchy of
flows modulo lower order flows. The importance of finite gap solutions, which
include doubly periodic solutions, comes from their explicit nature: these
solutions can be written down in terms of theta functions on the corresponding
spectral curve. As a consequence, one obtains theta function parameterizations
of constant mean curvature tori \cite{Bob} and, in principle, also of Willmore
tori.

Despite this rather complete picture of the moduli spaces of constant mean
curvature and Willmore tori a number of basic questions remain unanswered. For
instance, what are the minimal values of the two variational problems and on
which tori are they achieved? A well--known conjecture, first formulated by
Willmore, states that the minimal value of the bending energy $\int H^2$ over
immersed tori is $2\pi^2$ and is attained on the Clifford torus in the
$3$--sphere.  Note that the bending energy is invariant under the larger group
of M\"obius transformations of $\R^3$. Restricting to constant mean curvature
surfaces, which themselves are not M\"obius invariant, the bending energy
becomes the area functional and one deals with area estimates of constant mean
curvature tori. In this context one should also mention the conjecture, due to
Lawson, that the Clifford torus in $S^3$ is the only embedded minimal torus.
Using techniques from integrable system theory together with quaternionic
holomorphic geometry some progress has been made towards a resolution of these
conjectures \cite{S02, Klassiker,M&M}.

Instead of focusing on the ``classical" solutions of the above action
functionals on surfaces this paper shifts attention to the study of the moduli
space of all conformal immersions of a Riemann surface. This idea is
reminiscent to the path integral quantization in physics where one averages
over the space of all contending fields rather then just the classical
configurations given by the critical points of the action. Describing this
more fundamental space of all conformal immersions of a Riemann surface also
sheds new light on the constructions of constant mean curvature and Willmore
tori.

Even though one is traditionally interested in surfaces in $\R^3$, our
constructions necessitate to also consider surfaces in $\R^4$. The
conformality condition for an immersed surface is invariant under M\"obius
transformations and thus we are concerned with the space $\mathcal{M}$ of
(branched) conformal immersions $f\colon M\to S^4$ of a Riemann surface $M$
into the $4$--sphere $S^4$ up to M\"obius equivalence.  A special situation
arises for oriented branched conformal immersions of a compact Riemann surface
$M$ taking values in $S^2\subset S^4$: these are the meromorphic functions on
$M$ which can all be written down in terms of theta functions on $M$.
Projections of holomorphic curves in $\P^3$ via the twistor map to $S^4$
result in branched conformal immersions $f\colon M\to S^4$ into the
$4$--sphere which, in fact, are Willmore surfaces. These make up a portion of
the moduli space $\mathcal{M}$ described by the meromorphic functions on $M$.
Generally though, a conformal immersion $f\colon M\to S^4$ will not be of this
simple type and it is exactly those which are of interest in the present
paper.

To provide some intuition how we view the moduli space of conformal
immersions let us assume that $\mathcal{M}$ is a phase space of an integrable
system similar to the ones described above. If this were the case, we would
have to see abelian groups --- the energy shells of the integrable system ---
acting on the phase space. The least we should see though are Darboux
transformations: these are transformations on $\mathcal{M}$ which obey Bianchi
permutability and thus span an abelian group. In the afore mentioned example
of a constant mean curvature torus $f\colon T^2\to \R^3$ this abelian group is
the Jacobian of its spectral curve. The Darboux transforms are the secants to
the Abel image at the origin which span the Jacobian. In this way we can think
of the spectral curve, or rather its Abel image, as the space of all Darboux
transformations. This suggests that finding a notion of Darboux
transformations for a general conformal immersion $f\colon M\to S^4$ will
allow us to construct an analogue of the spectral curve.

Historically, there is the {\em classical} Darboux transformation of
isothermic surfaces \cite{darboux,udo_habil}: two conformally immersed
surfaces $f, f^{\sharp}\colon M\to S^4$ are classical Darboux transforms of
each other if there exists a $2$--sphere congruence $S$, that is, $S(p)$ is an
oriented round $2$--sphere in $S^4$, touching $f$ and $f^{\sharp}$ at $p\in
M$.  Darboux showed that if two surfaces are related by a classical Darboux
transform then both surfaces have to be isothermic, meaning that the principal
curvature lines of the surface are conformal coordinates on $M$.

In order to study general conformal immersions $f\colon M\to S^4$ we have to
relax the touching conditions which characterize Darboux transforms: instead
of demanding that the $2$--sphere congruence $S$ touches both surfaces $f$ and
$f^{\sharp}$, we only demand that $S$ touches $f$ and left--touches
$f^{\sharp}$ at the corresponding points. Two oriented planes through the
origin in $\R^4$ are left--touching, if their oriented intersection great
circles in $S^3\subset \R^4$ correspond under right multiplication by $S^3$.
Note that even if the original surface $f$ is contained in $3$--space, the
Darboux transform $f^{\sharp}$ has to lie in $4$--space to avoid both surfaces
to be isothermic: left--touching planes in $\R^3$ automatically coincide.
This explains the necessity to study conformal immersions into $4$--space,
even if one is mainly concerned about surfaces in $3$--space.  The Darboux
transformation so defined is invariant under M\"obius transformations and
satisfies Bianchi permutability: if $f^{\sharp}$ and $f^{\flat}$ are Darboux
transforms of $f\colon M\to S^4$ then there is a conformal map $\hat{f}\colon
M\to S^4$ which is a Darboux transform of $f^{\sharp}$ and $f^{\flat}$.

Following our intuitive reasoning, we view the space of all Darboux transforms
$f^{\sharp}\colon M\to S^4$ of a conformal immersion $f\colon M\to S^4$ as an
analog of the spectral curve. Our aim is to show that under certain
circumstances this space indeed has the structure of a Riemann surface.

We model the M\"obius geometry of the $4$--sphere by the projective
geometry of the quaternionic projective line $\HP^1$. A map
$f\colon M\to\HP^1$ is given by the quaternionic line subbundle
$L\subset V$ of the trivial $\H^2$--bundle $V$ with fiber
$L_p=f(p)\subset\H^2$ for $p\in M$. Conformality of $f$
is expressed by the existence of a (quaternionic) holomorphic
structure, that is, a first order elliptic operator
\begin{equation*}
\label{eq:i-holo}
 D\colon \Gamma(V/L)\to \Gamma(\bar{K}\,V/L)
\end{equation*}
on the line bundle $V/L$. The kernel $H^0(V/L)$ of $D$ defines the space of
(quaternionic) holomorphic sections of $V/L$ which contains the
$2$--dimensional linear system $H\subset H^0(V/L)$ obtained by projecting
constant sections in $V$ modulo $L$. This linear system gives rise to the
conformal immersion $f$ via the Kodaira correspondence \eqref{eq:Kodaira}.

In order to obtain an analytic description of Darboux transformations, we show
that a Darboux transform $f^{\sharp}$ of $f$ corresponds to a non--trivial
holomorphic section $\psi$ with monodromy of $V/L$, that is, a section $\psi$
on the universal cover of $M$ satisfying
\begin{equation*}
\label{eq:i-holo-mono} D\psi=0\quad\text{and}\quad
\gamma^{*}\psi=\psi h_{\gamma}
\end{equation*}
for some representation $h\colon \pi_1(M)\to \H_{*}$ of the fundamental
group of $M$. Therefore, to describe the space of Darboux
transforms of $f$, we have to characterize the
subspace of possible monodromies $h\in\Hom(\pi_1(M),\H_{*})$
up to conjugation by $\H_{*}$ of the holomorphic 
structure $D$ on $V/L$.

At this juncture we specialize to the situation when $M=T^2$ is a
$2$--torus and $f\colon T^2\to S^4$ is a conformal immersion whose
normal bundle has degree zero. Then the  fundamental group $\pi_1(T^2)$
is a rank $2$ lattice $\Gamma\subset \R^2$ and every
representation $h\in \Hom(\Gamma,\H_{*})$ can be conjugated to a
complex representation unique up to complex conjugation. The subspace
\[
\Spec(V/L,D)\subset \Hom(\Gamma,\C_{*})
\]
of possible complex monodromies of $D$ on $V/L$ over the torus $T^2$ is called
the {\em spectrum} of $D$. Here we make contact to the Floquet theory of the
periodic operator $D$ outlined by Taimanov \cite{Ta98}, Grinevich and Schmidt
\cite{GS98,S02} in their approach to the spectral curve.  Conjugating the
operator $D$ by the function $e^{\int\omega}$ on $\R^2$, where $\omega\in
\Harm(T^2,\C)$ is a harmonic $1$--form, we obtain a holomorphic family
\[
D_{\omega}\colon \Gamma(V/L)\to\Gamma(\bar{K}\,V/L)
\]
of first order elliptic operators parameterized over $ \Harm(T^2,\C)\cong
\C^2$. The kernel of $D_{\omega}$ describes precisely the holomorphic sections
of $D$ with monodromy $h=e^{\int\omega}$.  Since the normal bundle degree of
the immersion $f\colon T^2\to S^4$ is zero the family $D_{\omega}$ has index
zero. This implies \cite{ana} that the spectrum $\Spec(V/L,D)\subset
\Hom(\Gamma,\C_{*})$ is a $1$--dimensional analytic subvariety given by the
zero locus of the determinant $\det D_{\omega}$ of the holomorphic
family~$D_{\omega}$. Moreover, the kernel of $D_{\omega}$ along the spectrum
is generically $1$--dimensional.  The normalization
$h\colon\Sigma\to\Spec(V/L,D)$ of the spectrum to a Riemann surface is the
{\em spectral curve} $\Sigma$ of the immersed torus $f\colon T^2\to S^4$. It
carries a fixed point free real structure $\rho$ induced by complex
conjugation on the spectrum $\Spec(V/L,D)$.  The kernels $\ker D_{\omega}$
give rise to a complex holomorphic line bundle $\mathcal{L}$ over $\Sigma$
whose fiber $\mathcal{L}_x$ over $x\in\Sigma$ is spanned by a holomorphic
section $\psi$ of $V/L$ with monodromy $h(x)$. Such a section gives rise to a
Darboux transform $f^{\sharp}$ of $f$. Therefore $\Sigma/\rho$ parametrizes
the space of generic Darboux transforms of the conformal immersion $f\colon
T^2\to S^4$.

Analyzing the behavior of the holomorphic family $D_{\omega}$
for large monodromies shows \cite{ana} that the spectrum $\Spec(V/L,D)$ is
asymptotic to the {\em vacuum} spectrum $\Spec(V/L,\delbar)$ belonging to the
complex linear part $\delbar$ of $D$ on $V/L$. The vacuum spectrum
$\Spec(V/L,\delbar)\subset \Hom(\Gamma,\C_{*})$ is a singular curve
isomorphic to $\C\cup\C$ with the $\Z^2$--lattice of real
representations as double points. Therefore, the normalization of
the vacuum spectrum can be compactified to two copies of $\P^1$ by
adding two points at infinity and the real structure $\rho$
exchanges the two components.

To summarize, we have associated to a conformal immersion $f\colon T^2\to S^4$
from a $2$--torus to the $4$--sphere with zero normal bundle degree a Riemann
surface $\Sigma$ with either one or two ends, the spectral curve of $f$.
Depending on whether $\Sigma$ has one or two ends, the genus of $\Sigma$ is
infinite or finite. The spectral curve has a fixed point free real structure
$\rho$ induced by complex conjugation on the spectrum $\Spec(V/L,D)$.  The
kernels of $D_{\omega}$ define a complex holomorphic line bundle $\mathcal{L}$
over $\Sigma$ of quaternionic type, that is,
$\rho^*{\mathcal{L}}\cong\bar{\mathcal{L}}$. The fibers of $\mathcal{L}$ over
$x\in\Sigma$ describe Darboux transformations $f^{x}$ of $f$, whereby fibers
over $x$ and $\rho(x)$ give rise to the same Darboux transform. The resulting
map
\[
F\colon T^2\times {\Sigma} \to S^4\,,\quad F(p,x)=f^x(p)
\]
is conformal in the first factor
and lifts, via the twistor projection $\P^3\to S^4$, to a  map
\[
\hat{F}\colon T^2\times\Sigma\to\P^3
\]
which is holomorphic in the second factor.

As an example, we consider homogeneous tori $f\colon T^2\to S^4$ given by the
products of two circles of varying radii.  The holomorphic structure $D
=\delbar+Q$ on $V/L$ has constant $Q$ in an appropriate trivialization. The
spectrum $\Spec(V/L,D)$ is a perturbation of the vacuum spectrum
$\Spec(V/L,\delbar)$ by $Q$ which has the effect that one of the double points
of $\Spec(V/L,\delbar)$ resolves into a handle. Consequently, $\Spec(V/L,D)$
is connected and its normalization $\Sigma$ compactifies to the Riemann sphere
$\P^1$ by adding two points $o$ and $\infty=\rho(o)$ at infinity .  The map
$\hat{F}$ extends holomorphically into the points at infinity and is given by
a certain $T^2$--family of rational cubics.  Therefore, the map $F$, obtained
from $\hat{F}$ by twistor projection, yields a $T^2$--family of M\"obius
congruent Veronese embeddings of $\R\P^2=\P^1/\rho$ in $S^4$. The original
homogeneous torus $f$ is recovered by evaluating this family at infinity,
$f=F(-,o)=F(-,\infty)$.

In contrast to this example, the general conformal immersion $f\colon T^2\to
S^4$ of a $2$--torus of zero normal bundle degree will not have a spectral
curve $\Sigma$ which can be compactified: the holomorphic structure $D=\delbar
+Q$ on $V/L$ is a perturbation of the vacuum $\delbar$ by some ``potential"
$Q\in\Gamma(\bar{K}\End(V/L))$ which, based on physical intuition, has the
effect that the $\Z^2$--lattice of double points of the vacuum spectrum
$\Spec(V/L,\delbar)$ resolves into a $\Z^2$--lattice of handles for $\Sigma$
accumulating at infinity . Even though the theory of such Riemann surfaces of
infinite genus \cite{FKT03} resembles to some degree the theory of compact
Riemann surfaces, it is not algebro--geometric in nature.

This leads us to consider conformal tori $f\colon T^2\to S^4$ of zero normal
bundle degree whose spectral curves have finite genus and therefore two ends.
In this case only finitely many double points of the vacuum spectrum
$\Spec(V/L,\delbar)$ become handles for $\Spec(V/L,D)$. In the mathematical
physics literature the corresponding potentials $Q\in\Gamma(\bar{K}\End(V/L))$
with $D=\delbar +Q$ are called finite gap potentials. The normalization
$\Sigma$ of $\Spec(V/L,D)$ can therefore be compactified by the addition of
two points $o$ and $\infty=\rho(o)$ at infinity as in the example of
homogeneous tori above. The genus of the compactified spectral curve
$\bar{\Sigma}$ is called the {\em spectral genus} of the conformal torus
$f\colon T^2\to S^4$. Important examples of conformal tori of finite spectral
genus include, but are not confined to, constant mean curvature \cite{PS89,Hi}
and (constrained) Willmore tori \cite{S02,cw_tori}. In fact, there is
reasonable evidence from analogous situations in the literature \cite{igor,
  march} that conformal tori of finite spectral genus are dense in the space
of all conformal tori of zero normal bundle degree in the $4$--sphere.  When
$\Sigma$ has finite genus, we show that the map $\hat{F}\colon
T^2\times\Sigma\to\P^3$ extends holomorphically in the second component to
$T^2\times \bar\Sigma$.  The resulting $T^2$--family of algebraic curves in
$\P^3$, respectively their twistor projections to the $4$--sphere, recovers
the initial conformal torus $f\colon T^2\to S^4$ of finite spectral genus by
evaluation $f=F(-,o)=F(-,\infty)$ at infinity.

At this stage, we have come some way in describing conformal immersions of
$2$--tori into the $4$--sphere with zero normal bundle degree and finite
spectral genus in terms of algebro--geometric data: a compact finite genus
curve $\bar\Sigma$ with fixed point free real structure and a $T^2$--family of
algebraic curves into $\P^3$ compatible with the real structure.  As it turns
out \cite{ana} this $T^2$--family is given by linear motion on the real part
of the Jacobian of $\bar\Sigma$ tangent to the real part of its Abel image at
the origin.  In fact, this flow is the first in a hierarchy of commuting flows
preserving the Willmore energy coming from the osculating flag to the Abel
image at the origin.  It is precisely this observation which makes the space
of conformal maps $f\colon T^2\to S^4$ the phase space of a completely
integrable system, containing constant mean curvature and Willmore tori as
invariant subspaces.  This system is a geometric manifestation of the
Davey--Stewartson \cite{DS} hierarchy known to mathematical physicists and
what we have described is a geometric version of its finite gap integration
theory.

\section{The Darboux transformation}\label{sec:darboux}

We introduce the Darboux transformation for conformal immersions $f\colon M
\to S^4$ of a Riemann surface $M$ into the 4--sphere.  Similar to Darboux's
classical transformation of isothermic surfaces, our transformation of
conformal immersions is also given by a non--linear, M\"obius geometric
touching condition with respect to a $2$--sphere congruence.  Whereas the
classical Darboux transformation can be computed by solving a system of linear
ordinary differential equations, the general Darboux transformation of
conformal maps is described analytically by a quaternionic holomorphicity
condition, a linear first order elliptic partial differential equation.

\subsection{Conformal maps into $S^4$}
We model the M\"obius geometry of $S^4$ by the projective geometry of the
(quaternionic) projective line $\HP^1$. Thus, a map $f\colon M \to S^4$ is
given by the line subbundle $L\subset V$ of the trivial $\H^2$--bundle $V$
over $M$, where the fiber $L_p$ over $p\in M$ is the projective point $L_p =
f(p)$. In other words, $L =f^*\T$ is the pullback under $f$ of the
tautological line bundle $\T$ over $\HP^1$. Since the tangent bundle of
$\HP^1$ is $\Hom(\T,\underline{\H}^2/\T)$, the derivative of $f$ corresponds
to the 1--form
\[
\delta = \pi d|_L \in\Omega^1(\Hom(L,V/L))\,.
\]
Here $ d $ denotes the trivial connection on $V$ and $\pi\colon V \to V/L$
is the canonical projection.

To describe the conformality of $f$ note \cite{coimbra,PP} that over immersed
points $p\in M$ the real 2-plane $\delta(T_p M)\subset \Hom(L_p,\H^2/L_p)$ is
given by
\[
J_pX = X\tilde J_p\,, \quad  X\in \Hom(L_p,\H^2/L_p)
\]
for uniquely existing complex structures $J_p$ on $\H^2/L_p$ and $\tilde J_p$ on
$L_p$ which are compatible with the orientation on $\delta(TM)$ induced
from the Riemann surface $M$. 
Therefore, if $*$ denotes the complex structure on $TM^*$, the
conformality equation for $f$ over immersed points reads
\begin{equation}
\label{eq:conformal}
*\delta  = J\delta = \delta \tilde J\,.
\end{equation}
In what follows, we consider conformal maps $f\colon M \to S^4$ for which
at least one of the complex structures $J$ or $\tilde J$ extends smoothly
across the branch points.  This class of conformal maps, which
includes conformal immersions, can be described in terms of
quaternionic holomorphic geometry \cite{Klassiker}.  Notice that the
point--point duality of $\HP^1$ exchanges $L\subset V$ with
$L^\perp\subset V^*$.  Therefore, a conformal map $f$ with $*\delta
=J\delta$ becomes, via this duality, the conformal map $f^\perp$ with
$*\delta^\perp = \delta^\perp J^*$, where we identify $V/L =
(L^\perp)^*$ and $\delta^\perp = - \delta^*$.

An important invariant of a conformal map $f$ with
$*\delta = J \delta$ is its associated quaternionic holomorphic
structure (\ref{eq:hol_structure}) on the line bundle $V/L$. This
structure is given by the first order linear elliptic operator
\begin{equation}
\label{eq:hol_structure_V/L}
D\colon \Gamma(V/L) \to \Gamma(\bar{K}\,V/L) \quad \text{ defined by }\quad D \pi = (\pi  d )''\,,
\end{equation}
where $K$ denotes the canonical bundle of the Riemann surface $M$. The
operator $D$ is well--defined since $\pi d |_L=\delta\in\Gamma(K\Hom(L,V/L))$
and thus $(\pi d |_L)''=0$. If $f$ is non--constant the canonical projection
$\pi$ realizes $\H^2$ as a 2--dimensional linear system $H \subset H^0(V/L)$
whose Kodaira embedding \eqref{eq:Kodaira} is $L^\perp\subset V^*$.

The \emph{Willmore energy} of the conformal map $f$ is given by the
Willmore energy (\ref{eq:Willmore_energy_holbundle}) of the
holomorphic line bundle $V/L$, that is,
\begin{equation}
\label{eq:Willmore_energy}
\W(f)=\W(V/L,D) = 2\int_M <Q\wedge *Q>\,,
\end{equation}
where $Q\in\Gamma(\bar{K}\End_{-}(V/L))$ is the $J$--anticommuting part
of $D$.  

If $f\colon M \to S^4$ is an immersion also $L\subset V$ has a complex
structure $\tilde{J}$ and $*\delta=J\delta=\delta\tilde{J}$ by
\eqref{eq:conformal}. In particular, $\Hom_{+}(L,V/L)$ is the tangent bundle
and $N_f=\Hom_{-}(L,V/L)$ the normal bundle of $f$ where $\Hom_{\pm}(L,V/L)$
denote the complex linear, respectively complex antilinear, homomorphisms.
Since $\delta\in\Gamma(K\Hom_{+}(L,V/L))$ is a complex bundle isomorphism the
normal bundle degree of the conformal immersion $f\colon M\to S^4$, in case
$M$ is compact, calculates to
\begin{equation}
\label{eq:normal_degree}
\deg N_f=2\deg V/L + \deg K\,,
\end{equation}
where the degree of a quaternionic bundle is defined in \eqref{eq:degree}.  As
shown in \cite{coimbra}, Proposition~11, the M\"obius invariant $2$--form
$2<Q\wedge *Q>$, the integrand for the Willmore energy
\eqref{eq:Willmore_energy}, coincides over immersed points of $f$ with
\[
2<Q\wedge *Q>= ( |\mathcal{H}|^2 - \mathcal{K} - \mathcal{K}^\perp)d\mathcal{A}\,.
\]
Here we have chosen a point at infinity $L_0\in S^4$ and $\mathcal{H}$ is the
mean curvature vector, $\mathcal{K}$ the Gaussian curvature,
$\mathcal{K}^\perp$ the normal bundle curvature, and $d\mathcal{A}$ the
induced area of $f$ as a map into $\R^4=S^4\setminus\{L_0\}$.  Since
$V=L\oplus L_0$ and $V/L\cong L_0$ the trivial connection $d$ on $V$ restricts
to a flat connection $\nabla$ on $V/L$ for which $\nabla''=D=\delbar+Q$.  Let
$\nabla=\hat{\nabla}+A+Q$ be the decomposition of $\nabla$ into the
$J$--commuting part $\hat{\nabla}$ and the $J$--anticommuting part
$A+Q\in\Omega^{1}(\End(V/L))$, which we have further decomposed into type.  If
we denote by $\hat{R}$ the curvature of the complex connection $\hat{\nabla}$,
then flatness of $\nabla$ implies
\begin{equation}\label{eq:chern}
J\hat{R}=<A\wedge *A>-<Q\wedge *Q>\,.
\end{equation}
From \cite{coimbra}, Proposition~8, we see that
\[
2<A\wedge *A>=|\mathcal{H}|^2 d\mathcal{A}
\]
over immersed points of $f$. Thus, by \eqref{eq:chern} the $2$--form
$(\mathcal{K} +\mathcal{K}^\perp)d\mathcal{A}$ is the Chern form of the bundle
$V/L$, the classical Willmore integrand $|\mathcal{H}|^2 d\mathcal{A}$ extends
smoothly into the branch points of $f$, and
\begin{equation}
\label{eq:Willmore-classical}
\int_{M} |\mathcal{H}|^2  d\mathcal{A}= \W(f)+4\pi\deg(V/L)\,.
\end{equation}
\begin{lemma}
\label{l:embedded}
Let $f\colon M\to S^4$ be a non--constant conformal map with $*\delta=J\delta$
of a compact Riemann surface $M$ into $S^4$.  If the classical Willmore energy
of $f$ as a map into $\R^4$ satisfies
\[
\int_{M}|\mathcal{H}|^2 d\mathcal{A}< 8\pi\,,
\]
then $f$ is a conformal embedding with trivial normal bundle. Moreover, $\dim
H^0(V/L)=2$ which means that all holomorphic sections $\psi\in H^0(V/L)$ are
of the form $\psi=\pi(v)$ with $v\in\H^2$ where $\pi\colon V\to V/L$ denotes
the canonical projection.
\end{lemma}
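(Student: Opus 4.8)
The plan is to read off the three assertions --- embedding, trivial normal bundle, and $\dim H^0(V/L)=2$ --- from three independent inputs, each fed by the energy hypothesis $\int_M|\mathcal{H}|^2\,d\mathcal{A}<8\pi$: a Li--Yau multiplicity bound for embeddedness, the topology of $S^4$ for the normal bundle, and a quaternionic Plücker/Riemann--Roch estimate for the dimension count. First I would establish that $f$ is an embedding. By the Li--Yau inequality (in the Montiel--Ros form valid for branched conformal immersions into $S^4$), if a point of $S^4$ has $k$ preimages under $f$ counted with branch orders, then $\int_M|\mathcal{H}|^2\,d\mathcal{A}\ge 4\pi k$. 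Since the left--hand side is $<8\pi$, every fibre of $f$ has total multiplicity at most one: $f$ has no branch points and is injective, hence a conformal embedding.

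Triviality of the normal bundle is then purely topological. The image $f(M)$ is a closed oriented surface embedded in $S^4$, and since $H_2(S^4;\Z)=0$ it is null--homologous; therefore its normal Euler number equals the self--intersection $[f(M)]\cdot[f(M)]=0$, so the oriented rank--two normal bundle $N_f$ is trivial. Feeding $\deg N_f=0$ into \eqref{eq:normal_degree} gives $\deg(V/L)=-\tfrac12\deg K=1-g$, and then \eqref{eq:Willmore-classical} together with $\W(f)=2\int_M\langle Q\wedge *Q\rangle\ge 0$ yields the Willmore budget $\W(f)=\int_M|\mathcal{H}|^2\,d\mathcal{A}-4\pi(1-g)<4\pi(1+g)$.

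For the dimension count, note that the canonical projection already realizes $\H^2$ as the $2$--dimensional linear system $H\subset H^0(V/L)$, so $\dim H^0(V/L)\ge 2$ and only the reverse inequality needs proof. Here I would invoke the quaternionic Plücker formula / Riemann--Roch estimate for the holomorphic line bundle $(V/L,D)$. Any holomorphic section not arising from a constant section of $V$ enlarges the linear system, and through the Plücker relation its presence forces additional Weierstrass (inflection) order, hence a strictly larger Willmore energy. With $\deg(V/L)=1-g$ now fixed, the budget $\W(f)<4\pi(1+g)$ obtained above is too small to accommodate a third independent holomorphic section, so $\dim H^0(V/L)=2$; consequently $H^0(V/L)=H$ and every holomorphic section is $\pi(v)$ with $v\in\H^2$.

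The main obstacle is this last step: making precise the exact ``cost'' in Willmore energy of an extra holomorphic section and checking that the threshold $4\pi(1+g)$ genuinely excludes it. Unlike the complex case there is no holomorphic Wronskian, so the excess dimension is paid for by Willmore energy rather than by a vanishing order alone, and the Plücker estimate must be calibrated precisely against the $8\pi$ bound rather than used as a soft inequality. A secondary technical point, to be dispatched at the outset, is the validity of the Li--Yau inequality at branch points within the branched--conformal class considered here, i.e. that a branch point of order $k$ contributes at least $4\pi k$ to $\int_M|\mathcal{H}|^2\,d\mathcal{A}$.
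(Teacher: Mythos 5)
Your outline gets the embeddedness and normal--bundle steps right in substance, but there is a genuine gap at the dimension count, and it sits exactly where you flag your ``main obstacle.'' Your budget $\W(f)<4\pi(1+g)$ fed softly into the $n$--dimensional Pl\"ucker formula \eqref{eq:Pluecker} does not exclude a third holomorphic section once $g\geq 1$: with $\deg(V/L)=1-g$, a $3$--dimensional linear system $H$ only yields $\tfrac{1}{4\pi}\W(V/L)\geq 3(1-g)+\ord H$, which for $g=1$ reads $\W(V/L)\geq 4\pi\ord H$; against a budget $<8\pi$ you would still need to \emph{force} $\ord H\geq 2$, and nothing in your argument produces that vanishing. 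The paper closes precisely this hole with an evaluation--map trick followed by the $n=1$ Pl\"ucker formula: if $\dim H^0(V/L)>2$, the evaluation $H^0(V/L)\to \H^2/L_p\oplus\H^2/L_q$ at $p\neq q$ has nontrivial kernel, giving a $1$--dimensional system $H$ with $\ord H\geq 2$; but for any $1$--dimensional system, \eqref{eq:Pluecker} combined with \eqref{eq:Willmore-classical} gives $\ord H\leq \tfrac{1}{4\pi}\W(V/L)+\deg(V/L)=\tfrac{1}{4\pi}\int_M|\mathcal{H}|^2\,d\mathcal{A}<2$ --- a contradiction uniform in genus, because the degree term cancels exactly against \eqref{eq:Willmore-classical}. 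Note also a mild circularity in your bookkeeping: \eqref{eq:normal_degree} is derived for immersions, so you cannot fix $\deg(V/L)=1-g$ before immersedness is established; the paper's route never needs the value of $\deg(V/L)$ at all.

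Your first step is also weaker than it should be: you import the Li--Yau/Montiel--Ros inequality as a black box and explicitly defer its validity at branch points for the class of conformal maps with $*\delta=J\delta$, which is exactly the delicate point. The paper instead makes the lemma self--contained by running injectivity and immersedness through the same $1$--dimensional Pl\"ucker mechanism: a double point $L_p=L_q$ gives $\psi=\pi(v)$, $v\in L_p$ nonzero, vanishing at $p$ and $q$; a zero of $\delta$ at $p$ gives $\psi=\pi(v)$, $v\in L_p$, vanishing to \emph{second} order at $p$, via the computation $d_p<\alpha,\psi>=<\delta^\perp_p(\alpha),v>=0$ using $\delta^\perp=-\delta^*$. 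In effect the quaternionic Pl\"ucker formula subsumes the Li--Yau bound here, so invoking the latter separately is both redundant and, at branch points, unverified in your write--up. Your normal--bundle argument (null--homologous in $S^4$, hence self--intersection and Euler number zero) agrees with the paper's one--line conclusion, granted embeddedness.
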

\begin{proof}
  The proof is a repeated application of the Pl\"ucker formula
  \eqref{eq:Pluecker} for $1$--dimensional linear systems $H\subset H^0(V/L)$
  which, under the assumption of the lemma together with
  \eqref{eq:Willmore-classical}, satisfy
\[
\ord H<2\,.
\]
If $\dim H^0(V/L)>2$ then the linear map $H^0(V/L)\to \H^2/L_p\oplus \H^2/L_q$
given by evaluation of sections at $p\neq q\in M$ has at least a
$1$--dimensional kernel $H\subset H^0(V/L)$. This means that $H$ contains a
holomorphic section vanishing at $p$ and $q$ and thus $\ord H\geq 2$.

Next we show that $f$ is injective. If this is not the case there are distinct
points $p\neq q$ on $M$ with $L_p=L_q\subset \H^2$. Choosing any non--zero
$v\in L_p$ the corresponding non--trivial holomorphic section $\psi=\pi (v)$
of $V/L$ vanishes at $p$ and $q$ and we again have $\ord H\geq 2$.

To prove that $f$ is an immersion, we have to show that $\delta\in
\Gamma(K\Hom(L, V/L))$ has no zeros.  If $\delta$ had a zero at
$p\in M$, we construct a non--trivial holomorphic section $\psi\in
H^0(V/L)$ which vanishes to second order at $p\,$: let $\psi$ be the
holomorphic section of $V/L$ given by $\psi=\pi(v)$, where $v\in
L_p\subset \H^2$ is non--zero. Then $\psi$ has a zero at $p\in M$ and for any
$\alpha\in \Gamma(L^{\perp})$ we get
\[
d_p<\alpha,\psi>=d_p<\alpha,v>=<d_p\alpha,v>=<\pi^\perp d_p\alpha,v>=
<\delta^{\perp}_p(\alpha),v>=0\,,
\]
where we used that $\delta^{\perp}=-\delta^{*}$ and thus
$\delta^\perp_p=0$. This shows that $\psi\in H^0(V/L)$ vanishes to
second order at $p\in M$ and therefore $\ord H\geq 2$.

The normal bundle degree of $f$ is the self intersection number of $f(M)$
which is zero since $f$ is an embedding.
\end{proof}
\subsection{Darboux transforms}
An oriented round 2--sphere in $S^4=\HP^1$ is given by a linear map $S\colon
\H^2 \to \H^2$ which has $S^2 = -1$: points on the 2--sphere are the fixed
lines of $S$. The resulting line subbundle $L_S\subset V$ of the trivial
$\H^2$--bundle over the 2--sphere satisfies $SL_S = L_S$.  Thus, we have
complex structures on $L_S$ and $V/L_S$ and the conformality equation
\eqref{eq:conformal} of the embedded round sphere $S$ is $*\delta_S =
S\delta_S = \delta_S S$.

Given a Riemann surface $M$ a \emph{sphere congruence} assigns to each
point $p\in M$ an oriented round 2--sphere $S(p)$ in $S^4$. In other
words, a sphere congruence is a complex structure
$S\in\Gamma(\End(V))$ on the trivial $\H^2$--bundle $V$ over $M$.

Now let $f\colon M \to S^4$ be a conformal map with induced line bundle $L=
f^*\T\subset V$.  A sphere congruence $S\in\Gamma(\End(V))$
\emph{envelopes} $f$ if for all points $p\in M$ the spheres $S(p)$
pass through $f(p)$, and the oriented tangent spaces to $f$ coincide
with the oriented tangent spaces to the spheres $S(p)$ at $f(p)$ over
immersed points $p\in M$:
\begin{equation}
\label{eq:touch}
 SL = L \quad \text{ and } \quad *\delta = S\delta = \delta S\,.
\end{equation}
It is a classical result \cite{darboux,udo_habil} that two distinct conformal
immersions $f$ and $f^\sharp$ from the same Riemann surface $M$
which are both enveloped by the same sphere
congruence $S$ have to be isothermic surfaces. To overcome this restriction, we
need to relax the enveloping condition: two oriented planes through the
origin in $\R^4$ are \emph{left--touching}, respectively
\emph{right--touching}, if their associated oriented great circles on $S^3$
correspond via right, respectively left, translation in the group $S^3$. Hence,
we say that a sphere congruence $S$ \emph{left--envelopes}, respectively
\emph{right--envelopes}, $f$ if for all points $p\in M$ the spheres $S(p)$
pass through $f(p)$, and the oriented tangent spaces to $f$ are left--touching,
respectively right--touching, to the oriented tangent spaces to the spheres
$S(p)$ at $f(p)$ over immersed points $p\in M$:
\begin{equation}
\label{eq:half_touch}
 SL = L \quad \text{ and } \quad *\delta = S\delta\,,  \quad \text{
 respectively } \quad *\delta = \delta S\,.
\end{equation}

\begin{definition}
\label{def:darboux}
Let $M$ be a Riemann surface. A conformal map $f^\sharp\colon M \to S^4$ is
called a \emph{Darboux transform} of a conformal immersion $f\colon M \to
S^4$ if $f^\sharp(p)$ is distinct from $f(p)$ at all points $p\in M$, and if
there exists a sphere congruence $S$ which envelopes $f$ and
left--envelopes~$f^\sharp$:
\begin{gather}
\label{eq:darboux}
V = L \oplus L^\sharp, \quad
SL = L,\quad *\delta  = S\delta = \delta S,  \quad \text{ and } \quad SL^\sharp = L^\sharp,\quad *\delta^\sharp  = S\delta^\sharp\,.
\end{gather}
\end{definition}
In particular, if $f$ and $f^\sharp$ are conformal immersions into
$S^3$ and $f^\sharp$ is a Darboux transform of $f$, then both $f$ and
$f^\sharp$ are isothermic. This follows from the fact that in 3--space
a half--enveloping sphere congruence is always enveloping.

There are a number of equivalent characterizations of Darboux transforms
$f^\sharp\colon M \to S^4$ of a conformal immersion $f\colon M\to S^4$,
including a description in terms of flat adapted connections and, more
generally, holomorphic sections with monodromy of the bundle $V/L$.  It is
this last analytic characterization which will play a fundamental role in our
construction of the spectral curve.

Let $f, f^{\sharp}\colon M\to S^4$ be maps distinct from each other at all
points $p\in M$.  Then $V = L \oplus L^\sharp$ and $\pi$ identifies $V/L=
L^\sharp$ whereas $\pi^\sharp$ identifies $V/L^\sharp=L$. The trivial
connection $d$ on $V$ decomposes as
\begin{equation}
\label{eq:splitted_connection}
 d  =
 \begin{pmatrix} \nabla^L& \delta^\sharp\\
                   \delta& \nabla^\sharp
\end{pmatrix}
\end{equation}
and flatness of $d$ implies that
\begin{equation}
\label{eq:d_flat}
d\delta=0, \quad  d\delta^\sharp =0,  \quad \text{ and } \quad R^\sharp = -
\delta\wedge\delta^\sharp\,,
\end{equation}
where $R^\sharp$ denotes the curvature of $\nabla^\sharp$.
\begin{lemma}
\label{l:darboux_equivalent}
Let $f\colon M \to S^4$ be a conformal immersion with
$*\delta=J\delta=\delta\tilde{J}$ and $f^\sharp\colon M \to S^4$ a map so that
$V = L \oplus L^\sharp$. Then we have the following equivalent
characterizations of Darboux transforms:
\begin{enumerate}
\item The map $f^\sharp$ is a Darboux transform of $f$.
\item The map $f^\sharp$ is conformal with $*\delta^\sharp=\tilde
  J\delta^\sharp$.
\item The connection $\nabla^\sharp$ on $L^\sharp$ induced by the
  splitting $V = L \oplus L^\sharp$ is flat.
\item There is a non--trivial section
  $\psi^\sharp\in\Gamma(\widetilde{L^\sharp})$ with monodromy satisfying
  $d \psi^\sharp\in\Omega^1(\tilde L)$.
\end{enumerate}
\end{lemma}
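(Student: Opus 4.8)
The plan is to prove the three pairwise equivalences (i)$\Leftrightarrow$(ii), (ii)$\Leftrightarrow$(iii), and (iii)$\Leftrightarrow$(iv), each of which draws on a different piece of structure. The common input is the block form \eqref{eq:splitted_connection} of the trivial connection together with the flatness relations \eqref{eq:d_flat}; in particular I will use the curvature identity $R^\sharp=-\delta\wedge\delta^\sharp$. I will repeatedly exploit that, since $f$ is an immersion, $\delta$ is nowhere vanishing and hence a bundle isomorphism $L\xrightarrow{\sim}V/L\cong L^\sharp$ with $J\delta=\delta\tilde J$; this invertibility of $\delta$ is the structural fact that makes all three equivalences go through.

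For (i)$\Leftrightarrow$(ii) I would translate the (left--)enveloping conditions into the splitting $V=L\oplus L^\sharp$. Given (ii), let $S\in\Gamma(\End(V))$ be the complex structure acting as $\tilde J$ on $L$ and as $J$ on $L^\sharp\cong V/L$; then $S^2=-1$, $SL=L$ and $SL^\sharp=L^\sharp$. Since $S$ equals $J$ on the target $V/L$ of $\delta$ and $\tilde J$ on its source $L$, the conformality relation $*\delta=J\delta=\delta\tilde J$ becomes $*\delta=S\delta=\delta S$, so $S$ envelopes $f$; and the hypothesis $*\delta^\sharp=\tilde J\delta^\sharp$ is exactly $*\delta^\sharp=S\delta^\sharp$, so $S$ left--envelopes $f^\sharp$ and \eqref{eq:darboux} holds. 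Conversely, given $S$ as in \eqref{eq:darboux}, comparing $*\delta=S\delta=\delta S$ with $*\delta=J\delta=\delta\tilde J$ and using that $\delta$ is an isomorphism forces $S=J$ on $L^\sharp$ and $S=\tilde J$ on $L$; the left--enveloping relation $*\delta^\sharp=S\delta^\sharp$ then reads $*\delta^\sharp=\tilde J\delta^\sharp$, which is (ii).

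The computational core is (ii)$\Leftrightarrow$(iii), where I would show directly that $\delta\wedge\delta^\sharp=0$ if and only if $*\delta^\sharp=\tilde J\delta^\sharp$, and then invoke $R^\sharp=-\delta\wedge\delta^\sharp$. In a local oriented conformal frame $e_1,e_2=*e_1$ the relation $*\delta=J\delta$ gives $\delta(e_2)=-J\delta(e_1)$, so evaluating on $(e_1,e_2)$ and using $J\delta=\delta\tilde J$ one obtains $(\delta\wedge\delta^\sharp)(e_1,e_2)=\delta(e_1)\bigl(\delta^\sharp(e_2)+\tilde J\delta^\sharp(e_1)\bigr)$. As $\delta(e_1)$ is injective, this vanishes exactly when $\delta^\sharp(e_2)=-\tilde J\delta^\sharp(e_1)$, which is the frame form of $*\delta^\sharp=\tilde J\delta^\sharp$; hence $R^\sharp=0$ is equivalent to (ii). I expect this to be the main obstacle --- not for analytic depth but because the quaternionic operators $J$ and $\tilde J$ live on different lines and need not commute with quaternionic scalars, so one must keep them on the correct side throughout, and because one should confirm that $*\delta^\sharp=\tilde J\delta^\sharp$ really does express conformality of $f^\sharp$ with the complex structure $\tilde J$ extending across any branch points of $f^\sharp$.

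Finally, (iii)$\Leftrightarrow$(iv) is the standard flat--bundle/parallel--section correspondence. Reading off \eqref{eq:splitted_connection}, for $\psi^\sharp\in\Gamma(\widetilde{L^\sharp})$ the $L^\sharp$--part of $d\psi^\sharp$ is $\nabla^\sharp\psi^\sharp$ and the $\tilde L$--part is $\delta^\sharp\psi^\sharp$, so $d\psi^\sharp\in\Omega^1(\tilde L)$ is equivalent to $\nabla^\sharp\psi^\sharp=0$. If $\nabla^\sharp$ is flat, its pullback to the universal cover admits a nowhere--vanishing parallel section $\psi^\sharp$, whose deck--transformation behaviour defines a monodromy $h\colon\pi_1(M)\to\H_*$, giving (iv); conversely any nontrivial $\nabla^\sharp$--parallel section is nowhere vanishing, trivializes $\widetilde{L^\sharp}$, and exhibits $\nabla^\sharp$ as trivial in that frame, so $R^\sharp=0$. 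Combining the three equivalences completes the proof.
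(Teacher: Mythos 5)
Your proposal is correct and takes essentially the same route as the paper's proof: the sphere congruence $\begin{pmatrix}\tilde J&0\\0&J\end{pmatrix}$ in the splitting $V=L\oplus L^\sharp$ for (i)$\Leftrightarrow$(ii), the curvature identity $R^\sharp=-\delta\wedge\delta^\sharp$ for (ii)$\Leftrightarrow$(iii), and the flat--connection/parallel--section--with--monodromy correspondence for (iii)$\Leftrightarrow$(iv). Your local frame computation simply makes explicit what the paper compresses into the phrase ``a type consideration,'' and your forcing argument that any enveloping $S$ must equal $\tilde J$ on $L$ and $J$ on $L^\sharp$ is the uniqueness statement the paper asserts without proof.
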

A section $\psi$ with monodromy of a vector bundle $W\to M$ is
a section of the pull--back bundle $\tilde{W}\to \tilde M$ of
$W$ to the universal cover $\tilde{M}\to M$ with 
\[
 \gamma^*\psi=\psi h_\gamma\,,
 \] 
 where $h\colon \pi_1(M)\to \H_*$ is a 
representation and $\gamma\in\pi_1(M)$ acts as a deck transformation.
\begin{proof}
  The unique sphere congruence $S$ touching $f$ and containing $f^{\sharp}$,
  expressed in the splitting $V=L\oplus L^{\sharp}$, is
\[
\begin{pmatrix} \tilde{J}&0\\0&J\end{pmatrix}\,.
\]
Therefore, the sphere congruence $S$ left--touches $f^{\sharp}$ if and only if
$*\delta^\sharp=\tilde J\delta^\sharp$, which proves the first equivalence by
\eqref{eq:darboux}.  The second equivalence is a direct consequence of
$R^\sharp = - \delta\wedge\delta^\sharp$ in \eqref{eq:d_flat} together with a
type consideration.  The last equivalence follows from
\eqref{eq:splitted_connection} because flatness of $\nabla^{\sharp}$ is
equivalent to the existence of parallel sections with monodromy of the line
bundle $\tilde{L}^{\sharp}$.
\end{proof}
The isomorphism $\pi\colon L^{\sharp}\to V/L$ pushes forward the connection
$\nabla^{\sharp}$ to a connection $\nabla$ on $V/L$ satisfying
\begin{equation}
  \label{eq:splitting_connection}
  \nabla\pi|_{\Gamma(L^\sharp)} = \pi\nabla^{\sharp}=\pi  d|_{\Gamma(L^\sharp)}.
\end{equation}
By construction the connection $\nabla$ is {\em adapted} to the complex
structure $D$ on $V/L$ defined in \eqref{eq:hol_structure_V/L}, that is,
$\nabla'' = D$.

For a fixed immersion $f$ the spaces of splittings $V = L \oplus L^\sharp$ and
the space of adapted connections on $V/L$ are affine spaces modelled on the
vector spaces $\Hom(V/L,L)$ and $\Gamma(K\End(V/L))$, respectively.  The map
assigning to a splitting the induced adapted connection
\eqref{eq:splitting_connection} is affine with underlying vector space
homomorphism
\begin{equation}
  \label{eq:homomorphism}
  \Gamma(\Hom(V/L, L)) \to \Gamma(\Gamma(K\End(V/L))) \qquad R \mapsto \delta R.
  \end{equation}

Since $f$ is an immersion and thus $\delta$ an isomorphism, the correspondence
assigning to a splitting the induced adapted connection on $V/L$ is an affine
isomorphism.  Together with Lemma~\ref{l:darboux_equivalent} we get a
characterization of Darboux transforms in terms of flat adapted connections on
$V/L$.
\begin{cor}
\label{c:flat_adapted}
Let $f\colon M\to S^4$ be a conformal immersion. Then there is a bijective
correspondence between the space of Darboux transforms $f^\sharp\colon M \to
S^4$ of $f$ and the space of flat adapted connections on $V/L$.  This
correspondence is given by the restriction to the space of Darboux transforms
of the affine isomorphism \eqref{eq:splitting_connection} assigning to a
splitting $V = L \oplus L^\sharp$ the adapted connection $\nabla$ on $V/L$.
\end{cor}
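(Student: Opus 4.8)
The plan is to obtain the corollary as a restriction of the affine isomorphism constructed in the paragraph immediately preceding the statement. That discussion already establishes that the assignment \eqref{eq:splitting_connection}, sending a splitting $V = L\oplus L^\sharp$ to its induced adapted connection $\nabla$ on $V/L$, is an affine isomorphism onto the space of \emph{all} adapted connections, with underlying linear isomorphism \eqref{eq:homomorphism} $R\mapsto \delta R$ (an isomorphism precisely because $\delta$ is, $f$ being an immersion). Thus the only thing left to check is that this bijection carries the subset of splittings giving Darboux transforms exactly onto the subset of flat adapted connections. First I would record the tautological observation that splittings $V=L\oplus L^\sharp$ are the same data as maps $f^\sharp\colon M\to\HP^1$ with $f^\sharp(p)\neq f(p)$ at every $p$, via $f^\sharp(p)=L^\sharp_p$; this matches the distinctness hypothesis in Definition~\ref{def:darboux}.

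Next I would invoke Lemma~\ref{l:darboux_equivalent}, specifically the equivalence of (i) and (iii): the map $f^\sharp$ is a Darboux transform of $f$ if and only if the connection $\nabla^\sharp$ on $L^\sharp$ induced by the splitting is flat. Note that conformality of $f^\sharp$ is already subsumed in this equivalence, so no separate check is needed. It then remains only to transport flatness across the identification $\pi\colon L^\sharp\to V/L$. The defining relation \eqref{eq:splitting_connection}, namely $\nabla\pi=\pi\nabla^\sharp$ on $\Gamma(L^\sharp)$, says precisely that the bundle isomorphism $\pi$ intertwines $\nabla^\sharp$ and $\nabla$; differentiating once more, the curvatures satisfy $R^\nabla\pi=\pi R^\sharp$, so $\nabla$ is flat if and only if $\nabla^\sharp$ is flat.

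Chaining these equivalences yields the desired statement: a splitting $V=L\oplus L^\sharp$ produces a Darboux transform $f^\sharp$ $\iff$ $\nabla^\sharp$ is flat $\iff$ the corresponding adapted connection $\nabla$ on $V/L$ is flat. Hence the affine isomorphism \eqref{eq:splitting_connection} restricts to a bijection between the space of Darboux transforms of $f$ and the space of flat adapted connections on $V/L$, which is exactly the claim.

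I expect no serious analytic obstacle here, since the substance is already carried by Lemma~\ref{l:darboux_equivalent} and by the affine-isomorphism statement; the corollary is essentially a bookkeeping consequence of restricting a known bijection to matching subsets on each side. The one point to handle with a little care is the transport of flatness under $\pi$: one must make sure that $\nabla$ really is the genuine push-forward of $\nabla^\sharp$, and not merely a connection agreeing with it to first order, so that the two curvatures are truly conjugate. This is precisely what the equality $\nabla\pi=\pi\nabla^\sharp$ on all of $\Gamma(L^\sharp)$ guarantees, so the argument closes cleanly.
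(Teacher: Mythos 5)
Your proposal is correct and follows essentially the same route as the paper, which likewise obtains the corollary by combining the affine isomorphism \eqref{eq:splitting_connection} (bijective because $\delta$ is an isomorphism for immersed $f$) with the equivalence (i)$\iff$(iii) of Lemma~\ref{l:darboux_equivalent}. Your explicit verification that $\pi$ intertwines the connections, so that $R^\nabla\pi=\pi R^\sharp$ and flatness transports across the identification $L^\sharp\cong V/L$, is a detail the paper leaves implicit but is exactly the intended argument.
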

In what follows it is necessary to compute Darboux transforms from flat
adapted connections by using prolongations of holomorphic sections on $V/L$.
\begin{lemma}
\label{l:prolong}
Let $f\colon M\to S^4$ be a conformal immersion. Then the canonical projection
$\pi\colon V \rightarrow V/L$ induces a bijective correspondence between
sections $\hat{\psi}\in\Gamma(V)$ of $V$ satisfying
$d\hat{\psi}\in\Omega^1(L)$ and holomorphic sections $\psi \in H^0(V/L)$.
\end{lemma}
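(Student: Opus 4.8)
The plan is to check that $\pi$ sends prolongations to holomorphic sections and then to invert it by prolonging each holomorphic section, the immersion hypothesis entering only through the nonvanishing of $\delta$. For well--definedness, take $\hat\psi\in\Gamma(V)$ with $d\hat\psi\in\Omega^1(L)$ and set $\psi=\pi\hat\psi$. Since $\pi$ annihilates $L$ we have $\pi d\hat\psi=0$, so the defining relation $D\pi=(\pi d)''$ from \eqref{eq:hol_structure_V/L} gives $D\psi=(\pi d\hat\psi)''=0$; hence $\psi\in H^0(V/L)$ and the assignment $\hat\psi\mapsto\psi$ lands in the holomorphic sections.

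For injectivity, suppose $\hat\psi_1$ and $\hat\psi_2$ prolong the same $\psi$. Their difference $s=\hat\psi_1-\hat\psi_2$ is a section of $L$ with $ds\in\Omega^1(L)$, and using that $\delta(s)=\pi\,ds$ for sections $s$ of $L$ we obtain $\delta(s)=0$. Because $f$ is an immersion, $\delta\in\Gamma(K\Hom(L,V/L))$ is nowhere vanishing, hence invertible on the line bundle $L$, so $s=0$. This is the same nonvanishing of $\delta$ already used to identify splittings with adapted connections.

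The substantive step is surjectivity, i.e.\ producing a prolongation of a given $\psi\in H^0(V/L)$. I would start from any smooth lift $\hat\psi_0\in\Gamma(V)$ with $\pi\hat\psi_0=\psi$; the obstruction to $d\hat\psi_0\in\Omega^1(L)$ is the $V/L$--valued one--form $\pi d\hat\psi_0$. Here holomorphicity is decisive: its $\bar K$--part vanishes, $(\pi d\hat\psi_0)''=D\psi=0$, so $\pi d\hat\psi_0$ is purely of type $K$, i.e.\ lies in $\Gamma(K\,V/L)$. Since $\delta$ is a nowhere vanishing, hence invertible, bundle map $L\to K\,V/L$, the equation $\delta(s)=-\pi d\hat\psi_0$ has a unique solution $s\in\Gamma(L)$. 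Setting $\hat\psi=\hat\psi_0+s$ then gives $\pi\hat\psi=\psi$ together with $\pi d\hat\psi=\pi d\hat\psi_0+\delta(s)=0$, so $d\hat\psi\in\Omega^1(L)$; by the injectivity above this $\hat\psi$ is the unique prolongation, independent of the auxiliary lift $\hat\psi_0$.

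I expect the main obstacle to be a matching of types rather than any hard analysis: the correction lives in $L$ and $\delta$ converts it into a one--form of type $K$, which is exactly the type carried by the obstruction $\pi d\hat\psi_0$ precisely because $\psi$ is holomorphic. Without the immersion hypothesis $\delta$ could degenerate and the prolongation would fail to exist, so the invertibility of $\delta$ together with this type bookkeeping is the heart of the argument, while well--definedness and injectivity are formal consequences of $D\pi=(\pi d)''$ and the nonvanishing of $\delta$.
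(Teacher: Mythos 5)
Your proposal is correct and follows essentially the same route as the paper's proof: lift $\psi$ arbitrarily, observe that holomorphicity forces the obstruction $\pi d\hat\psi_0$ to be of type $K$, and use the pointwise invertibility of $\delta\colon L\to K\,V/L$ (valid since $f$ is an immersion) to solve uniquely for the correction in $\Gamma(L)$. Your explicit treatment of well--definedness and injectivity merely spells out what the paper compresses into the word ``unique,'' so there is no substantive difference.
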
\begin{proof}
  Let $\hat\psi_0\in\Gamma(V)$ be a lift of $\psi\in H^0(V/L)$, that is, $\pi
  \hat\psi_0 = \psi$. Then \eqref{eq:hol_structure_V/L} implies $ (\pi d
  \hat\psi_0)'' = D \pi \hat \psi_0 = D\psi = 0 $ and, since $f$ is an
  immersion, there is a unique section $\varphi\in \Gamma(L)$ with
  $\delta\varphi = (\pi d \hat\psi_0)$. But then $\hat \psi= \hat \psi_0
  -\varphi$ is the unique section of $V$ with the required properties.
\end{proof}
\begin{definition}
  Let $f\colon M\to S^4$ a conformal immersion. The {\em prolongation} of a
  holomorphic section $\psi\in H^0(V/L)$ is the unique section
  $\hat{\psi}\in\Gamma(V)$ with $\pi\hat{\psi}=\psi$ satisfying
  $d\hat{\psi}\in\Omega^1(L)$, that is, $\pi d\hat{\psi}=0$.
\end{definition}
Given a flat adapted connection $\nabla$ on $V/L$, we want to compute the
corresponding Darboux transform of $f$ from Corollary~\ref{c:flat_adapted}. We
take a parallel section $\psi\in\Gamma(\widetilde{V/L})$ over the universal
cover $\tilde{M}$ of $M$. Since $V/L$ is a line bundle this section has
monodromy.  But $\nabla$ is adapted so that $\psi\in H^0(\widetilde{V/L})$ is
also a holomorphic section with monodromy, that is,
 \begin{equation}
\label{eq:hol_mon}
D\psi = 0\quad\text{and}\quad \gamma^*\psi=\psi h_\gamma\,,
\end{equation}
for a representation $h\colon \pi_1(M)\to \H_*$.  Then the prolongation
$\hat{\psi}\in \Gamma(\tilde{V})$ is a section with the same monodromy $h$.
Moreover, as a parallel section $\psi$ has no zeros and neither
does~$\hat{\psi}$. This shows that the line bundle
\[
L^{\sharp}=\hat{\psi}\H\subset V
\]
is well defined over $M$ and satisfies $V=L\oplus L^{\sharp}$. As a
prolongation $\hat{\psi}$ has $d\hat{\psi}\in\Omega^1(\tilde{L})$ which is one
of the equivalent characterizations in Lemma~\ref{l:darboux_equivalent} for
Darboux transforms.  This shows that the map $f^\sharp\colon M\to S^4$
corresponding to the line bundle $L^\sharp$ is the Darboux transform of $f$
belonging to the adapted connection $\nabla$.

The parallel sections of flat adapted connections on $V/L$ are precisely the
holomorphic sections with monodromy of $V/L$ that are nowhere vanishing.  If a
holomorphic section $\psi\in\Gamma(\widetilde{V/L})$ has zeroes these are
isolated \cite{Klassiker} and, away from the finite set of zeros, there is a
unique flat adapted connection on $V/L$ which makes $\psi$ parallel.  Hence, a
holomorphic section $\psi$ of $V/L$ with monodromy gives rise to a
Darboux transform $f^\sharp$ of $f$ defined away from the zero locus of
$\psi$ by $L^\sharp = \hat \psi\H$, where $\hat \psi$ is the prolongation of
$\psi$. It follows from \cite{Klassiker}, Lemma~3.9,  that $f^\sharp$ extends 
continuously across the
zeros of $\psi$ where it agrees with~$f$.  We call such $f^\sharp$
\emph{singular} Darboux transforms of $f$.

\begin{lemma}
\label{l:singular_darboux}
Let $f\colon M\to S^4$ be a conformal immersion. Then there is a bijective
correspondence between the space of (singular) Darboux transforms
$f^\sharp\colon M \to S^4$ of $f$ and the space of non--trivial holomorphic
sections with monodromy up to scale of $ V/L$.  Under this correspondence,
non--singular Darboux transforms get mapped up to scale to nowhere vanishing
holomorphic sections with monodromy of $V/L$.
\end{lemma}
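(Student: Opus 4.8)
The plan is to read off the correspondence from the constructions immediately preceding the statement, where a \emph{singular Darboux transform} was in fact \emph{defined} to be the map $f^\sharp$ produced from a non-trivial holomorphic section with monodromy $\psi$ via the prolongation $\hat\psi$ and $L^\sharp=\hat\psi\H$. Thus the forward assignment $\psi\mapsto f^\sharp$ is surjective onto singular Darboux transforms by construction, and the real work is to check that it descends to scale classes and is injective. First I would verify that the prolongation of Lemma~\ref{l:prolong} is right $\H$-linear: since $\delta$ is a quaternionic bundle isomorphism, scaling $\psi\mapsto\psi\lambda$ with $\lambda\in\H_*$ scales the defining equation $\delta\varphi=\pi d\hat\psi_0$, so that $\widehat{\psi\lambda}=\hat\psi\lambda$. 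Consequently $L^\sharp=\hat\psi\H=\widehat{\psi\lambda}\,\H$ is unchanged and $f^\sharp$ depends only on the class $\psi\H_*$. This shows the map is well defined on non-trivial holomorphic sections with monodromy up to scale.

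For injectivity I would suppose two such sections $\psi_1,\psi_2$ induce the same $f^\sharp$. The coincidence set where $f^\sharp=f$ is then the same for both, and it is precisely their common zero set, which is discrete by \cite{Klassiker}. On its complement both prolongations span the identical line bundle $L^\sharp$, so by Corollary~\ref{c:flat_adapted} both $\psi_1$ and $\psi_2$ are parallel for the single flat adapted connection $\nabla$ determined by the splitting $V=L\oplus L^\sharp$. Parallel sections of a flat connection on a quaternionic line bundle differ by a constant right factor, whence $\psi_2=\psi_1\lambda$ for some $\lambda\in\H_*$ off the zeros; by continuity, the zeros being isolated, this identity extends over all of $M$. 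Hence $\psi_1$ and $\psi_2$ lie in the same scale class and the correspondence is a bijection.

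It remains to identify the non-singular transforms. Here I would observe that $f^\sharp(p)=f(p)$ exactly when $L^\sharp_p=L_p$, i.e. when $\hat\psi(p)\in L_p$, which via $\pi\hat\psi=\psi$ means $\psi(p)=0$. Therefore $f^\sharp$ is a genuine (non-singular) Darboux transform, with $V=L\oplus L^\sharp$ holding over all of $M$, precisely when $\psi$ is nowhere vanishing; this is the final sentence of the lemma, and it dovetails with the earlier remark that such $\psi$ are exactly the parallel sections of flat adapted connections. The step I expect to demand the most care is the passage across the singular set: justifying both the continuous extension of $L^\sharp$ across the zeros of $\psi$ and the extension of the constant-scale identification in the injectivity argument. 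Both rest on the isolation of zeros of holomorphic sections with monodromy and on the continuous-extension statement of \cite{Klassiker}, Lemma~3.9, which I would invoke rather than reprove, since the underlying analysis of quaternionic holomorphic line bundles is developed there.
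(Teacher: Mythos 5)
Your proposal is correct and takes essentially the same route as the paper, which states the lemma as a summary of the immediately preceding construction: prolongation of holomorphic sections (Lemma~\ref{l:prolong}), the bijection with flat adapted connections (Corollary~\ref{c:flat_adapted}), isolation of zeros, and the continuous extension across zeros from \cite{Klassiker}, Lemma~3.9. Your explicit verifications that the prolongation is right--$\H$--equivariant (so the correspondence descends to scale classes) and that injectivity follows because two sections inducing the same $L^\sharp$ are both parallel for the unique flat adapted connection, hence differ by a constant factor, merely fill in details the paper leaves implicit.
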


In contrast to Definition~\ref{def:darboux}, Lemma~\ref{l:darboux_equivalent},
and Corollary~\ref{c:flat_adapted} which characterize Darboux transforms by
non--linear M\"obius geometric or zero curvature conditions,
Lemma~\ref{l:singular_darboux} characterizes Darboux transforms in terms of
solutions to a linear elliptic equation.  Therefore, locally $V/L$ has an
infinite dimensional space of holomorphic sections without zeros and we get an
infinite dimensional space of local Darboux transforms of $f$. The situation
is rather different when considering global Darboux transforms $f^\sharp\colon
M \to S^4$ where $M$ has non--trivial topology. We shall see in the next
section that when $f\colon T^2\to S^4$ is a conformally immersed 2--torus of
zero normal bundle degree there is a Riemann surface worth of Darboux
transforms of $f$.  There are always the trivial Darboux transforms, the
constant maps $f^\sharp$, coming from holomorphic sections (without monodromy)
of the $2$--dimensional linear system $H \subset H^0(V/L)$.  Whenever such a
trivial Darboux transform $f^\sharp=f(p)$ is a point contained in the image of
$f$, we have a singular Darboux transform and at $p\in M$ the sphere
congruence $S$ degenerates to a point.

\subsection{Bianchi permutability}
An important feature of the classical Darboux transformation of isothermic
surfaces is the following permutability property \cite{Bia,udo_habil}: if
$f^\sharp$ and $f^\flat$ are Darboux transforms of an isothermic immersion $f$
then there exists an isothermic immersion $\hat f$ which simultaneously is a
Darboux transform of $f^\sharp$ and $f^\flat$. This property carries over to
general Darboux transforms of conformal immersions and will be used in
Section~\ref{sec:spectral} to show that the Darboux transformation is {\em
  isospectral}.
\begin{theorem}
\label{thm:bianchi}
Let $f\colon M\to S^4$ be a conformal immersion and let $f^{\sharp},
f^{\flat}\colon M\to S^4$ be two immersed Darboux transforms of $f$ so that
$f^\sharp$ and $f^\flat$ are distinct for all points in~$M$.  Then there
exists a conformal map $\hat f\colon M\to S^4$ which is a Darboux transform of
$f^\sharp$ and~$f^\flat$.
\end{theorem}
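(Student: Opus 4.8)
The plan is to produce the common Darboux transform $\hat f$ as a single line bundle $\hat L\subset V$ that simultaneously arises from a holomorphic section with monodromy of $V/L^\sharp$ and of $V/L^\flat$. I would set up the three bundles symmetrically: by Lemma~\ref{l:singular_darboux} and Lemma~\ref{l:prolong} the immersed Darboux transforms $f^\sharp,f^\flat$ are given by nowhere vanishing prolongations $\hat\psi^\sharp,\hat\psi^\flat\in\Gamma(\tilde V)$ with monodromy, where $L^\sharp=\hat\psi^\sharp\H$, $L^\flat=\hat\psi^\flat\H$ and $d\hat\psi^\sharp,d\hat\psi^\flat\in\Omega^1(\tilde L)$. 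Since $f^\sharp(p)\neq f^\flat(p)$ for all $p$ we have $V=L^\sharp\oplus L^\flat$, so $\hat\psi^\sharp$ and $\hat\psi^\flat$ span $\tilde V$ at every point.

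The structural input I would isolate first is that the normal complex structure of a Darboux transform coincides with the tangent complex structure $\tilde J$ of $f$. Comparing the conformality $*\delta^\sharp=J^\sharp\delta^\sharp$ of the immersion $f^\sharp$ with the characterization $*\delta^\sharp=\tilde J\delta^\sharp$ of Lemma~\ref{l:darboux_equivalent}(ii), and using that $\delta^\sharp$ is an isomorphism, gives $J^\sharp=\tilde J$ under the isomorphism $L\cong V/L^\sharp$ induced by $\pi^\sharp$; likewise $J^\flat=\tilde J$ under $\pi^\flat$. I would then check that $\pi^\sharp\hat\psi^\flat$ is a holomorphic section with monodromy of $V/L^\sharp$: since $d\hat\psi^\flat\in\Omega^1(\tilde L)$ and $*\,d\hat\psi^\flat=\tilde J\,d\hat\psi^\flat$ (this is $*\delta^\flat=\tilde J\delta^\flat$ read off the prolongation), pushing forward by $\pi^\sharp$ and invoking $J^\sharp=\tilde J$ yields $(\pi^\sharp d\hat\psi^\flat)''=0$, that is $D^\sharp(\pi^\sharp\hat\psi^\flat)=0$. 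As $\hat\psi^\flat\notin L^\sharp$ this section is nowhere zero, so by Lemma~\ref{l:singular_darboux} applied to $f^\sharp$ it defines a (non--singular) Darboux transform $\hat f$ of $f^\sharp$ whose line bundle $\hat L=\hat\phi\,\H$ is the prolongation $\hat\phi$ of $\pi^\sharp\hat\psi^\flat$ with respect to $f^\sharp$. Interchanging the roles of $\sharp$ and $\flat$, the section $\pi^\flat\hat\psi^\sharp$ defines a Darboux transform of $f^\flat$ with line bundle $\hat L'=\hat\phi'\,\H$.

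The main obstacle, and the actual content of Bianchi permutability, is the coincidence $\hat L=\hat L'$. Writing $\hat\phi=\hat\psi^\flat+\hat\psi^\sharp\mu$ (the correction lies in $L^\sharp$ because $\pi^\sharp\hat\phi=\pi^\sharp\hat\psi^\flat$), the prolongation condition $d\hat\phi\in\Omega^1(L^\sharp)$ forces the $\tilde L$--valued parts to cancel. Both $d\hat\psi^\sharp$ and $d\hat\psi^\flat$ lie in the same quaternionic line of $\tilde L$--valued forms satisfying $*\eta=\tilde J\eta$, so I can write $d\hat\psi^\sharp=\theta\,a^\sharp$ and $d\hat\psi^\flat=\theta\,a^\flat$ for a common $\tilde L$--valued $(1,0)$--form $\theta$ and $\H$--valued functions $a^\sharp,a^\flat$ that are nowhere zero because $f^\sharp,f^\flat$ are immersions. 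The cancellation then gives $\mu=-(a^\sharp)^{-1}a^\flat$, hence $\hat\phi=\hat\psi^\flat-\hat\psi^\sharp(a^\sharp)^{-1}a^\flat$, while the symmetric computation produces $\hat\phi'=\hat\psi^\sharp-\hat\psi^\flat(a^\flat)^{-1}a^\sharp$. A short quaternionic manipulation shows
\[
\hat\phi\cdot\bigl(-(a^\flat)^{-1}a^\sharp\bigr)=\hat\phi',
\]
so $\hat\phi$ and $\hat\phi'$ span the same line and $\hat L=\hat L'$.

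Finally I would record that $\hat L$ is complementary to both $L^\sharp$ and $L^\flat$ and distinct from each at every point (again using $a^\sharp,a^\flat\neq 0$), so the resulting conformal map $\hat f$ is genuinely a Darboux transform of $f^\sharp$ and of $f^\flat$, conformality being automatic from the holomorphic--section--with--monodromy construction. The delicate point throughout is not the existence of the two transforms but the fact that the two a priori different lines $\hat L$ and $\hat L'$ coincide; this forces the explicit quaternionic computation above, where the common eigenform $\theta$ for $\tilde J$ is exactly what makes the two expressions proportional.
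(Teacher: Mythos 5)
Your proposal is correct and is essentially the paper's own argument in slightly different packaging: your function $\chi=(a^\sharp)^{-1}a^\flat$ defined by $d\hat\psi^\flat=d\hat\psi^\sharp\chi$ is exactly the paper's $\chi$, your $\hat\phi=\hat\psi^\flat-\hat\psi^\sharp\chi$ is exactly the paper's section $\varphi$ in \eqref{eq:double_darboux}, and your coincidence computation $\hat\phi\bigl(-(a^\flat)^{-1}a^\sharp\bigr)=\hat\phi'$ is the paper's observation that $\varphi\chi^{-1}$ spans the same line and exhibits $\hat f$ as a Darboux transform of $f^\flat$. The only differences are expository — the paper writes down the single section $\varphi$ directly instead of constructing two prolongations and matching them, and your auxiliary form $\theta$ is unnecessary since one may take $\theta=d\hat\psi^\sharp$ — so both proofs rest on the same use of Lemma~\ref{l:darboux_equivalent} and quaternionic division in the line bundle $KL$.
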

\begin{proof}
From Lemma~\ref{l:darboux_equivalent} we know that there exist
non--trivial sections $\psi^\sharp\in\Gamma(\widetilde{L^\sharp})$ and
$\psi^\flat \in\Gamma(\widetilde{L^\flat})$ with monodromies
$h^\sharp, h^\flat\colon \pi_1(M) \to \H_*$ satisfying $\pi
d\psi^\sharp = \pi d\psi^\flat =0$. 
Using the splitting $V = L \oplus L^\sharp$ we obtain
\[
d\psi^\sharp = \nabla^\sharp\psi^\sharp + \delta^\sharp\psi^\sharp =
\delta^\sharp\psi^\sharp \in\Gamma(\widetilde {KL})\,,
\]
where we again used Lemma~\ref{l:darboux_equivalent} to see that
$*\delta^\sharp= \tilde J \delta^\sharp$.  In particular, as a
parallel section $\psi^\sharp$ is nowhere vanishing and, since
$f^\sharp$ is an immersion, also $d \psi^\sharp
=\delta^\sharp\psi^\sharp$ is nowhere vanishing.
The same argument applied to $\psi^\flat$ yields the nowhere vanishing
sections $d\psi^\sharp$ and $d\psi^\flat$ with monodromies $h^\sharp$
and $h^\flat$  of the line bundle $ KL$.
Therefore, 
\begin{equation}
\label{eq:bianchi}
d\psi^\flat = d
\psi^\sharp\chi,
\end{equation}
where $\chi\colon \tilde M \to \H_*$ and $\gamma^*\chi= (h^\sharp)\invers \chi
h^\flat$. Since $V = L^\sharp \oplus L^\flat$ the section
\begin{equation}
\label{eq:double_darboux}
\varphi = \psi^\flat- \psi^\sharp \chi \in\Gamma(\tilde V)
\end{equation}
with monodromy $h^\flat$ has a nowhere vanishing projection to $V/L^{\sharp}$.
Then the line bundle $\hat L\subset V$ spanned by the nowhere vanishing
section $\varphi$ is a Darboux transform $\hat f\colon M \to S^4$ of
$f^\sharp$: by Lemma~\ref{l:darboux_equivalent} it is sufficient to show that
$\pi^\sharp d\varphi=0$ which follows immediately from
\eqref{eq:double_darboux} and (\ref{eq:bianchi}).  On the other hand, the
nowhere vanishing section $\varphi \chi\invers\in\Gamma(\tilde{\hat L})$ has
monodromy $h^\sharp$ and exhibits $\hat f$ as a Darboux transform of
$f^\flat$.
\end{proof}
\begin{rem}\label{rem:bianchi}
  The proof of the previous theorem shows the following: given a conformal
  immersion $f\colon M\to S^4$ and an immersed Darboux transform
  $f^{\sharp}\colon M\to S^4$ of $f$ then \eqref{eq:double_darboux} defines a
  monodromy preserving map between holomorphic sections with monodromy of
  $V/L$ and $V/L^{\sharp}$. 
\end{rem}

\subsection{The Willmore energy of Darboux transforms}
For immersions of compact surfaces the Darboux transform preserves the
Willmore energy up to topological quantities. In particular, for conformally
immersed tori with trivial normal bundle the Darboux transform preserves the
Willmore energy.
\begin{lemma}
\label{l:willmore_darboux}
Let $f\colon M\to S^4$ be a conformal immersion of a compact Riemann surface and
let $f^\sharp\colon M \to S^4$ be a Darboux transform of $f$. Then
\begin{equation*}
\W(f^\sharp)= \W(f) + 2\pi(\deg N_f -\deg K)\,,
\end{equation*}
where $N_f = \Hom_-(L,V/L)$ is the normal bundle of $f$.  
\end{lemma}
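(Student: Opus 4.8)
The plan is to reduce the formula to a statement about a single Hopf field. Combining \eqref{eq:normal_degree}, which gives $\deg N_f-\deg K=2\deg(V/L)$, with \eqref{eq:Willmore-classical}, the assertion $\W(f^\sharp)=\W(f)+2\pi(\deg N_f-\deg K)$ is equivalent to
\[
\W(f^\sharp)=\int_M|\mathcal H|^2\,d\mathcal A\,,
\]
that is, the Willmore energy of the Darboux transform equals the \emph{classical} Willmore energy of $f$. This is the identity I would aim to prove.

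I would start from the holomorphic description $\W(f^\sharp)=\W(V/L^\sharp,D^\sharp)=2\int_M<Q^\sharp\wedge *Q^\sharp>$, where $Q^\sharp\in\Gamma(\bar K\End_{-}(V/L^\sharp))$ is the $\tilde J$--anticommuting part of the holomorphic structure $D^\sharp$. Using the splitting $V=L\oplus L^\sharp$ and the projection $\pi^\sharp$ to identify $V/L^\sharp\cong L$, the defining relation $D^\sharp\pi^\sharp=(\pi^\sharp d)''$ together with \eqref{eq:splitted_connection} shows that $D^\sharp$ corresponds to $(\nabla^L)''$, so that $Q^\sharp$ is the $\tilde J$--anticommuting part of $\nabla^L$. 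On the side of $f$, let $\nabla$ be the flat adapted connection on $V/L$ pushed forward from $\nabla^\sharp$ by $\pi$, and write $\nabla=\hat\nabla+A+Q$ for its decomposition into the $J$--commuting part and the $(1,0)$-- and $(0,1)$--Hopf fields $A\in\Gamma(K\End_{-}(V/L))$, $Q\in\Gamma(\bar K\End_{-}(V/L))$.

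The heart of the proof is the integrated identity
\[
\int_M<Q^\sharp\wedge *Q^\sharp>=\int_M<A\wedge *A>\,,
\]
which expresses that the Darboux transformation \emph{exchanges} the two Hopf fields: the antiholomorphic Hopf field of $f^\sharp$ carries the same energy as the holomorphic Hopf field of $f$. I would establish this by writing $A$ and $Q^\sharp$ explicitly through the shared off--diagonal forms $\delta,\delta^\sharp$ of the trivial connection $d$, using the flatness relation $R^\sharp=-\delta\wedge\delta^\sharp$ from \eqref{eq:d_flat} and, analogously, $R^L=-\delta^\sharp\wedge\delta$, together with the enveloping conditions $*\delta=J\delta=\delta\tilde J$ and $*\delta^\sharp=\tilde J\delta^\sharp$ that encode the common sphere congruence $S=\left(\begin{smallmatrix}\tilde J&0\\0&J\end{smallmatrix}\right)$. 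Keeping careful track of the $J$-- and $\tilde J$--decompositions and of the interchange of $(1,0)$-- and $(0,1)$--types is the main obstacle; this bookkeeping is exactly where the left--touching condition, as opposed to full enveloping, enters.

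Granting the key identity, the conclusion follows. The integral $\int_M<A\wedge *A>$ is independent of the chosen flat adapted connection, since integrating \eqref{eq:chern} over $M$ expresses it through the topological quantity $\int_M J\hat R$ and the fixed term $\int_M<Q\wedge *Q>$; hence by Proposition~8 of \cite{coimbra} it equals $\tfrac12\int_M|\mathcal H|^2\,d\mathcal A$, the integrand extending smoothly across branch points. Therefore
\[
\W(f^\sharp)=2\int_M<Q^\sharp\wedge *Q^\sharp>=2\int_M<A\wedge *A>=\int_M|\mathcal H|^2\,d\mathcal A=\W(f)+2\pi(\deg N_f-\deg K)\,,
\]
as claimed.
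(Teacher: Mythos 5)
Your outer scaffolding is correct: the reduction of the lemma, via \eqref{eq:normal_degree} and \eqref{eq:Willmore-classical}, to the statement $\W(f^\sharp)=\int_M|\mathcal{H}|^2\,d\mathcal{A}$ is valid; the identification $V/L^\sharp\cong L$ with $D^\sharp=(\nabla^L)''$ is right; and your observation that $\int_M<A\wedge *A>$ is independent of the choice of flat adapted connection, by integrating \eqref{eq:chern} against the fixed $Q$ and the topological term $\int_M J\hat R=2\pi\deg(V/L)$, is sound and lets you legitimately import Proposition~8 of \cite{coimbra}. You have also isolated exactly the right key identity, $\int_M<Q^\sharp\wedge *Q^\sharp>=\int_M<A\wedge *A>$, which is precisely the content of the paper's chain $\W(V/L^\sharp)=\W(L)=\W(K\,V/L)=2\int_M<A\wedge *A>$. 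The gap is that you never prove this identity, and the route you sketch would not deliver it as described. The Hopf fields are not algebraic expressions in the off--diagonal forms: $Q^\sharp$ is the anticommuting part of $(\nabla^L)''$ and $A+Q$ the anticommuting part of $\nabla$, so both are built from first derivatives of the complex structures (essentially $\nabla^L\tilde J$ and $\nabla J$), and no amount of type bookkeeping on $\delta,\delta^\sharp$ alone can ``write $A$ and $Q^\sharp$ explicitly'' through them. Moreover, the identities you invoke from \eqref{eq:d_flat} are the wrong ones: $R^\sharp=-\delta\wedge\delta^\sharp$ vanishes identically for a Darboux transform (flatness of $\nabla^\sharp$ is the Darboux condition, Lemma~\ref{l:darboux_equivalent}), and neither curvature relation enters the energy comparison.

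The operative ingredient is the Codazzi part $d\delta=0$ of \eqref{eq:d_flat}. The paper uses it to show that $\delta$ is a \emph{holomorphic} bundle isomorphism from $(L,(\nabla^L)'')\cong(V/L^\sharp,D^\sharp)$ onto $K\,V/L$ equipped with the holomorphic structure $d^\nabla$, where $\nabla$ is the flat adapted connection belonging to $f^\sharp$ from Corollary~\ref{c:flat_adapted}: for $\varphi$ holomorphic one computes $d^\nabla\delta\varphi=(d\delta)\varphi-\delta\wedge\nabla^L\varphi=-\delta\wedge(\nabla^L)''\varphi=0$, the $(\nabla^L)'$--term dying by type. Hence $\W(f^\sharp)=\W(L)=\W(K\,V/L)$, and a short computation with the decomposition $d^\nabla=\delbar+\tilde Q$ identifies the Hopf field of $K\,V/L$ as $\tilde Q\,\omega=A\wedge\omega$, giving $\W(K\,V/L)=2\int_M<A\wedge *A>$ --- your key identity. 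With this inserted your argument closes, and it is then only a mild repackaging of the paper's own proof, which integrates \eqref{eq:chern} to get $4\pi\deg(V/L)=\W(K\,V/L)-\W(V/L)$ directly rather than detouring through $\int_M|\mathcal{H}|^2\,d\mathcal{A}$. Note also that your side remark misplaces where the left--touching condition enters: it is consumed entirely by the flatness of $\nabla$ (used in \eqref{eq:chern}), not by the $(1,0)$/$(0,1)$ bookkeeping.
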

\begin{proof}
We first note that $ \delta \in
\Gamma(\Hom_+(L,K \, V/L))$ is a holomorphic bundle isomorphism: the
holomorphic structure $(\nabla^L)''$ on $L$ comes from the splitting
(\ref{eq:splitted_connection}) and $K\, V/L$ has the holomorphic
structure $d^\nabla$, where $\nabla$ is the flat adapted connection on
$V/L$ given in Corollary~\ref{c:flat_adapted}.  If $\varphi\in H^0(L)$
then (\ref{eq:d_flat}) implies
\[
d^\nabla\delta\varphi = (d \delta)\varphi - \delta\wedge
\nabla^L\varphi =- \delta\wedge (\nabla^L)''\varphi = 0\,,
\]
i.e., $\delta\varphi\in H^0(K\, V/L)$, which shows that $\delta$ is a
holomorphic bundle map. Since $f$ is an immersion $\delta$ is a
holomorphic isomorphism and thus 
$\W(L)=\W(K\, V/L)$. 
On the other
hand, $\pi^\sharp$ identifies the quaternionic holomorphic bundle
$V/L^\sharp$ with $L$ so that
\begin{equation}
\label{eq:willmore_rel}
\W(V/L^\sharp)=\W(L)=\W(K\, V/L)\,.
\end{equation}
The adapted flat connection $\nabla$ of $V/L$ can be decomposed into $J$
commuting and anti\-commuting parts $ \nabla = \hat\nabla +A+Q$ where
$\hat\nabla = \partial + \delbar$ is a complex connection and
$A\in\Gamma(K\End_-(V/L))$ and $Q\in\Gamma(K\End_-(V/L))$ are endomorphism
valued $1$--forms of type $K$ and $\bar{K}$ respectively. Since $\nabla$ is
adapted, $D = \nabla'' = \delbar + Q$, the Willmore
energy~(\ref{eq:Willmore_energy}) of $V/L$ is given by
\begin{equation}
\label{eq:willmore_Q}
\W(V/L)=2\int_M<Q\wedge *Q>\,.
\end{equation}
Decomposing \eqref{eq:hol_structure_decomp} the holomorphic structure
$d^\nabla$ on $K\, V/L$ into $J$ commuting and anticommuting parts
$d^\nabla = \delbar + \tilde Q$, one checks that
\[
\tilde Q \omega = A\wedge \omega
\]
for $ \omega\in \Gamma(K\, V/L)$. Therefore the Willmore energy of
$K\, V/L$ is given by
\begin{equation}
\label{eq:willmore_KV/L}
\W( K\, V/L) = 2\int_M <A\wedge *A>\,.
\end{equation}
The flatness of $\nabla=\hat{\nabla}+A+Q$ implies
\[
J\hat R = <A \wedge *A> - <Q \wedge *Q>\,,
\]
and (\ref{eq:willmore_Q}), (\ref{eq:willmore_KV/L}) yield
\[
 4\pi\deg(V/L) =  \W(K\, V/L) - \W(V/L)\,.
\]
The lemma now follows from (\ref{eq:willmore_rel}) together with the formula
\eqref{eq:normal_degree} for the normal bundle degree of $f$.
\end{proof}

\subsection{Conformal tori with $\int_{T^2}|\mathcal{H}|^2 d\mathcal{A}<8\pi$}
We know from Lemma~\ref{l:embedded} that the sublevel set
$\int_{T^2}|\mathcal{H}|^2 d\mathcal{A} <8\pi$ in the space of conformal tori
$f\colon T^2 \to S^4$ in the $4$--sphere consists of conformal embeddings with
trivial normal bundles.  From \eqref{eq:normal_degree} we see that $\deg
V/L=0$ which implies $\mathcal{W}(f)=\int_{T^2}|\mathcal{H}|^2 d\mathcal{A}$
by \eqref{eq:Willmore-classical}.  Every non--constant Darboux transform
$f^\sharp\colon T^2\to S^4$ of $f$ is again a conformal embedding and we can
apply Bianchi permutability repeatedly for conformal tori in this sublevel
set.
\begin{lemma}
\label{l:emb_Darboux}
Let $f\colon T^2\to S^4$ be a conformal map with $\int_{T^2}|\mathcal{H}|^2
d\mathcal{A}<8\pi$.  Then
\begin{enumerate}
\item $f$ is a conformal embedding and
  $\mathcal{W}(f)=\int_{T^2}|\mathcal{H}|^2 d\mathcal{A}$,
\item every non--constant Darboux transform $f^\sharp\colon T^2\to S^4$ is a
  conformal embedding and $\int_{T^2}|\mathcal{H}^\sharp|^2
  d\mathcal{A^\sharp}= \int_{T^2}|\mathcal{H}|^2 d\mathcal{A}$,
\item two distinct non--constant Darboux transforms $f^{\sharp}$ and $
  f^{\flat}$ of $f$ satisfy $f^{\sharp}(p)\neq f^{\flat}(p)$ for all $p\in
  T^2$, and
\item the only singular Darboux transforms are the constant maps contained in
  the image of $f$.
\end{enumerate}
\end{lemma}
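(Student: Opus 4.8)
The plan is to reduce all four assertions to Lemma~\ref{l:embedded} and the energy identity of Lemma~\ref{l:willmore_darboux}, using that $\deg K=0$ on the torus. For (i), the hypothesis $\int_{T^2}|\mathcal{H}|^2 d\mathcal{A}<8\pi$ is precisely the hypothesis of Lemma~\ref{l:embedded}, so $f$ is a conformal embedding with trivial normal bundle; then \eqref{eq:normal_degree} forces $\deg V/L=0$ and \eqref{eq:Willmore-classical} collapses to $\mathcal{W}(f)=\int_{T^2}|\mathcal{H}|^2 d\mathcal{A}$. For (ii), let $f^\sharp$ be a non--constant Darboux transform. The exact sequence $0\to L\to V\to V/L\to 0$ with $V$ trivial and $\deg V/L=0$ gives $\deg L=0$, and since $\pi^\sharp$ identifies $V/L^\sharp\cong L$ we get $\deg V/L^\sharp=0$. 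Lemma~\ref{l:willmore_darboux} with $\deg N_f=\deg K=0$ yields $\mathcal{W}(f^\sharp)=\mathcal{W}(f)$, so \eqref{eq:Willmore-classical} applied to $f^\sharp$ gives $\int_{T^2}|\mathcal{H}^\sharp|^2 d\mathcal{A}^\sharp=\mathcal{W}(f^\sharp)=\int_{T^2}|\mathcal{H}|^2 d\mathcal{A}<8\pi$. As $f^\sharp$ is a non--constant conformal map with $*\delta^\sharp=\tilde J\delta^\sharp$ by Lemma~\ref{l:darboux_equivalent}, Lemma~\ref{l:embedded} applies to $f^\sharp$ and shows it is a conformal embedding.

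The heart of the matter is the following rigidity statement, which I would isolate first and then use for both (iv) and (iii): \emph{a non--trivial holomorphic section with monodromy of $V/L$ (or of $V/L^\sharp$) that has a zero is, up to scale, the projection of a constant in $V$, and in particular has trivial monodromy.} The natural line of attack is the Pl\"ucker formula \eqref{eq:Pluecker}: exactly as in the proof of Lemma~\ref{l:embedded}, the data $\deg V/L=0$, $g=1$ and $\mathcal{W}(V/L)<8\pi$ force $\ord(\psi\H)<2$ for every such $\psi$ (the Pl\"ucker formula is insensitive to monodromy), so a section with a zero has a single simple zero $p_0$. Since $\dim H^0(V/L)=2$ equals the dimension of the linear system $H$ of projected constants, there is also a genuine section $\psi_0=\pi(v_0)\in H$, $v_0\in L_{p_0}$, vanishing simply at the same point $p_0$. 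One then wants to compare $\psi$ with $\psi_0$ and conclude they are proportional; this is the step where the quaternionic setting is delicate, because the naive ``holomorphic ratio'' argument fails due to the antilinear part $Q$ of $D$, and the $8\pi$ energy bound must be fed back in through the Pl\"ucker estimate to rule out a genuinely monodromic solution. Granting the rigidity statement, (iv) is immediate: a singular Darboux transform comes by Lemma~\ref{l:singular_darboux} from such a $\psi$ with a zero, hence $\psi=\pi(v)$ with trivial monodromy, so $L^\sharp=v\H$ is constant and $f^\sharp\equiv v\H$; since $f^\sharp(p_0)=f(p_0)$ this constant lies in the image of $f$.

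For (iii) I would then argue by contradiction. Let $f^\sharp$ and $f^\flat$ be distinct non--constant Darboux transforms, coming from nowhere--vanishing holomorphic sections with monodromy of $V/L$. By Remark~\ref{rem:bianchi} the Bianchi section \eqref{eq:double_darboux} sends $f^\flat$ to a holomorphic section $s$ with monodromy $h^\flat$ of $V/L^\sharp$, and by construction $s$ vanishes at $p$ precisely when $L^\flat_p=L^\sharp_p$, i.e.\ when $f^\flat(p)=f^\sharp(p)$. By (ii) the bundle $V/L^\sharp$ satisfies the hypotheses of the rigidity statement ($\deg V/L^\sharp=0$, $\mathcal{W}(V/L^\sharp)<8\pi$, $f^\sharp$ an embedding), so if $s$ had a zero it would, up to scale, be a projection of a constant and hence have trivial monodromy $h^\flat$. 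But trivial $h^\flat$ makes $\psi^\flat$ an honest section of $V/L$, i.e.\ a constant Darboux transform, contradicting that $f^\flat$ is non--constant. Therefore $s$ is nowhere vanishing and $f^\sharp(p)\neq f^\flat(p)$ for all $p$.

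The main obstacle is exactly the rigidity statement of the second paragraph: turning the purely homological information ``$\ord(\psi\H)<2$'' into the conclusion that a zero forces the monodromy to be trivial. This cannot be done by the complex-analytic ratio of two sections with a common simple zero, since the antilinear term $Q$ obstructs holomorphicity of that ratio; the sub--$8\pi$ Willmore bound has to enter a second time, through the Pl\"ucker formula applied to the section together with its comparison $\psi_0$, to exclude a nonconstant monodromic solution. Once this is established, (iv) and (iii) follow formally as above, and (i), (ii) are bookkeeping from Lemmas~\ref{l:embedded} and~\ref{l:willmore_darboux}.
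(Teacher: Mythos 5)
Your parts (i) and (ii) are correct and follow the paper's own bookkeeping, with one small inaccuracy: in (ii) you deduce $\deg L=0$ from additivity in the exact sequence $0\to L\to V\to V/L\to 0$ with ``$V$ trivial'', but quaternionic degrees depend on the choice of complex structures and $V$ carries no preferred one, so this step is not well posed as stated. The paper instead gets $\deg L=0$ from the fact that $\delta\in\Gamma(K\Hom_+(L,V/L))$ is a complex bundle isomorphism together with $\deg K=0$ and $\deg V/L=0$ on $T^2$; this is easily substituted.

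The genuine gap is at what you yourself call the heart of the matter: you leave the rigidity statement unproven and predict it needs a delicate comparison in which the $8\pi$ bound ``enters a second time'' through a ratio-type argument. It does not; the missing idea is that the Pl\"ucker formula \eqref{eq:Pluecker} is established in the appendix for linear systems \emph{with monodromy}, i.e. for any quaternionic subspace $H\subset H^0(\widetilde{V/L})$ with $\gamma^*H=H$, and this lets you mix monodromic and monodromy-free sections in one system. Concretely, as in the paper: if $\psi$ is the section of a non-constant singular Darboux transform, vanishing at $p$, choose by $\dim H^0(V/L)=2$ (Lemma~\ref{l:embedded}) a projected constant $\varphi=\pi(v)$, $v\in L_p$, vanishing at the same point. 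The span $H=\psi\H\oplus\varphi\H$ is $\gamma^*$-invariant because each summand is ($\gamma^*\psi=\psi h_\gamma$, $\gamma^*\varphi=\varphi$), and it is honestly $2$-dimensional, since otherwise $\psi$ would be a scale of $\varphi$ and $f^\sharp$ constant. Both generators vanish at $p$, so the Weierstrass gap sequence there is at least $(1,2)$ and $\ord H\ge 2$; then \eqref{eq:Pluecker} with $n=2$, $g=1$, $\deg V/L=0$ gives $\W(f)=\W(V/L)\ge 8\pi$, contradicting the hypothesis. This two-line argument replaces your rigidity lemma outright: no proportionality of $\psi$ with $\psi_0$ is needed, your preliminary reduction to a single simple zero via $\ord(\psi\H)<2$ is superfluous, and your remark that ``the Pl\"ucker formula is insensitive to monodromy'' points exactly the wrong way --- its indifference to everything but $\gamma^*$-invariance is what makes the mixed system admissible. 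With this repaired, your deductions of (iv) and then (iii) go through and agree in structure with the paper's proof, which phrases (iii) as: a common point of $f^\sharp$ and $f^\flat$ makes the Bianchi section \eqref{eq:double_darboux} project under $\pi^\sharp$ to a monodromic holomorphic section of $V/L^\sharp$ with a zero, so the common transform $\hat f$ would be a non-constant singular Darboux transform of $f^\sharp$, excluded by the first part applied to $f^\sharp$ --- legitimate since (ii) and Lemma~\ref{l:embedded} give $\W(V/L^\sharp)<8\pi$ and $\dim H^0(V/L^\sharp)=2$.
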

\begin{proof}
  From Lemma \ref{l:singular_darboux} we see that a non-constant Darboux
  transform $f^\sharp$ is singular if and only if its corresponding
  holomorphic section $\psi\in H^0(\widetilde{V/L})$ with monodromy has a zero
  at, say, $p\in T^2$.  Since $\dim H^0(V/L) \geq 2$ there always is a
  holomorphic section $\varphi \in H^0(V/L)$ vanishing at $p\in T^2$.
  Therefore, the 2--dimensional linear system $H \subset H^0(\widetilde{V/L})$
  with monodromy (\ref{eq:system_monodromy}), (\ref{eq:ord_H}) spanned by
  $\psi$ and $\varphi$ has $\ord H\ge 2$.  But then the Pl\"ucker formula
  \eqref{eq:Pluecker} together with $\deg V/L=0$ gives the contradiction
  $\W(f)=\W(V/L)\geq 8\pi$.

  Therefore all non--constant Darboux transform $f^\sharp$ are non--singular
  so that $V=L\oplus L^\sharp$.  Since $f$ is an embedding,
  $*\delta=J\delta=\delta\tilde{J}$ where $\delta\in \Gamma(K\Hom_{+}(L,V/L))$
  is an isomorphism, which shows that the line bundle $L$ with complex
  structure $\tilde{J}$ has $\deg L=0$. On the other hand,
  Lemma~\ref{l:darboux_equivalent} says that the Darboux transform $f^\sharp$
  has $*\delta^\sharp=\tilde{J}\delta^\sharp$ which implies that the line
  bundle $V/L^\sharp=L$ with complex structure $\tilde{J}$ has $\deg
  V/L^\sharp=0$.  Applying \eqref{eq:Willmore-classical} and
  Lemma~\ref{l:willmore_darboux}, we therefore obtain
\[
\int_{T^2}|\mathcal{H}^\sharp|^2
d\mathcal{A^\sharp}=\mathcal{W}(f^\sharp)=\mathcal{W}(f)=\int_{T^2}|\mathcal{H}|^2
d\mathcal{A}<8\pi
\]
which shows that also $f^\sharp$ is a conformal embedding by
Lemma~\ref{l:embedded}.

If two non-constant Darboux transforms $f^{\sharp}$ and $f^{\flat}$ of $f$ had
a point in common, the section $\varphi$ in \eqref{eq:double_darboux} would
project under $\pi^{\sharp}$ to a holomorphic section with monodromy of
$V/L^{\sharp}$ with a zero.  Thus, the common non-constant Darboux transform
$\hat{f}$ of $f^{\sharp}$ and $f^{\flat}$ would be singular.

Finally, if $f^\sharp$ is constant then $V=L\oplus L^\sharp$ if and only if
$f^\sharp$ is not in the image of $f$.
\end{proof}

\section{The spectral curve}\label{sec:spectral}
In this section we show that a conformally immersed torus $f\colon T^2\to S^4$
of zero normal bundle degree gives rise to a Riemann surface, the spectral
curve $\Sigma$ of $f$. Our approach is geometric because the spectral curve is
introduced as the space parametrizing Darboux transforms of $f$. This point of
view yields a natural $T^2$--family of {\em holomorphic} maps of $\Sigma$ into
$\P^3$ from which the conformal immersion $f\colon T^2\to S^4$ can be
reconstructed.
\subsection{The spectrum of a holomorphic line bundle over a torus} 
We recall that Lemma \ref{l:singular_darboux} characterizes a Darboux
transform $f^\sharp\colon M\to S^4$ of a conformal immersion $f\colon M\to
S^4$ by a non--trivial holomorphic section $\psi\in \Gamma(\widetilde{V/L})$
with monodromy $h\colon\pi_1(M)\to\H_*$.  Scaling this holomorphic section
conjugates the representation $h$ but does not affect the Darboux transform
$f^{\sharp}$. This suggests to describe the parameter space of Darboux
transforms analytically in terms of the possible monodromies $h\colon
\pi_1(M)\to \H_*$ of non--trivial holomorphic sections of $\widetilde{V/L}$ up
to conjugation.  
\begin{definition}
\label{def:spec}
Let $W$ be a quaternionic line bundle with holomorphic structure $D$
over a Riemann surface $M$.  The {\em quaternionic spectrum} of $W$ is the
subspace
\begin{equation*}
\Spec_\H(W,D) \subset \Hom(\pi_1(M),\H_*)/\H_*
\end{equation*}
of conjugacy classes of possible monodromy representations $h\colon
\pi_1(M)\to \H_*$ for holomorphic sections of $\tilde{W}$. In other words, $h$
represents a point in $\Spec_\H(W,D)$ if and only if there exists a
non--trivial section $\psi\in \Gamma(\tilde{W})$ with
\[
D\psi = 0\, \quad\text{and}\quad \gamma^*\psi=\psi h_\gamma\,,
\]
where $\gamma\in\pi_1(M)$ acts by deck transformations.
\end{definition}
Applying this notion to the holomorphic line bundle $V/L$ induced by a
conformal immersion $f\colon M \to S^4$, we obtain from
Lemma~\ref{l:singular_darboux} a surjective map from the space of
(singular) Darboux transforms of $f$ onto the quaternionic spectrum
$\Spec_\H(V/L,D)$ of $V/L$. Under this map the constant Darboux
transforms, arising from the linear system $H\subset H^0(V/L)$,
correspond to the trivial representation.

In what follows, we confine our discussion to the case when $M=T^2$ is a
torus. Then more can be said about the structure of the quaternionic spectrum
$\Spec_{\H}(W,D)$ of a holomorphic line bundle $W$. 
Since $T^2 = \R^2/\Gamma$ has abelian fundamental group
$\pi_1(T^2)=\Gamma$, every representation in $\Hom(\Gamma,\H_*)$ can
be conjugated into a complex representation in $\Hom(\Gamma,\C_*)$.
Furthermore, conjugating a complex representation $h$ by the
quaternion $j$ results in the representation $\bar h\in
\Hom(\Gamma,\C_*)$. Therefore, the map
\[
\Hom(\Gamma,\C_*)\to\Hom(\Gamma,\H_*)/\H_*
\]
is $2:1$ away from real representations and the quaternionic spectrum
$\Spec_\H(W,D)$ of $W$ lifts to the {\em complex spectrum}
\[
\Spec(W,D)\subset\Hom(\Gamma,\C_*)\,
\]
given by the possible complex monodromies for  holomorphic sections of
$\tilde{W}$.
By construction the complex spectrum is invariant under complex conjugation
$\rho(h)=\bar{h}$ and the quaternionic spectrum
\begin{equation}
\label{eq:mod-rho}
\Spec_\H(W,D)=\Spec(W,D)/\rho
\end{equation}
is the quotient of the complex spectrum under $\rho$.  

The abelian complex Lie group $\Hom(\Gamma,\C_*)$ has as Lie algebra
$\Hom(\Gamma,\C)$ which, via the period map, is isomorphic to the vector space
of harmonic $1$-forms $\Harm(T^2,\C)$ on the torus $T^2$. The exponential map
\begin{equation}
\label{eq:harm_hom}
\exp\colon \Harm(T^2,\C) \to \Hom(\Gamma,\C_*)\quad\text{where}\quad\exp( \omega)=e^{\int\omega}
\end{equation}
has the integer harmonic forms $\Gamma^* = \Harm(T^2,2\pi i \Z)$ as its kernel
and thus induces the holomorphic isomorphism
\begin{equation}
  \label{eq:isomorphism}
\Harm(T^2, \C)/\Gamma^* \cong \Hom(\Gamma,\C_*)\,.
\end{equation}
In order to see that the spectrum is an analytic variety, we lift the spectrum
to the $\Gamma^*$-periodic {\em logarithmic spectrum}
\begin{equation}
\label{eq:spec_log}
\Specs(W,D)=\exp^{-1}(\Spec(W,D))\subset \Harm(T^2,\C)
\end{equation}
which consists of harmonic forms $\omega$ for which there is a holomorphic
section $\psi\in H^0(\tilde{W})$ with monodromy $h = e^{\int\omega}$.
Interpreting $e^{\int \omega}\in\Hom(\R^2,\C_*)$ as a (non--periodic) gauge
transformation the section $\psi e^{-\int \omega} \in \Gamma(W)$ has trivial
monodromy and lies in the kernel of the gauged operator
\begin{equation}
\label{eq:D_omega_def}
D_{\omega}=e^{-\int \omega} \circ D\circ e^{\int\omega}:\Gamma(W)\to \Gamma(\bar K W)\,.
\end{equation}
The operator $D_{\omega}$ is defined on the torus (even though the gauge is
not) because the Leibniz rule (\ref{eq:hol_structure}) of a quaternionic
holomorphic structure implies
\begin{equation}
\label{eq:D_omega}
 D_\omega(\psi)  = D\psi + (\psi \omega)''\,,
\end{equation}
and both $D$ and $\omega$ are defined on the torus $T^2$. Moreover, $D_\omega$
is elliptic and, due to the term $(\psi \omega)''$ in (\ref{eq:D_omega}),
complex linear (rather than quaternionic linear) between the complex rank 2
bundles $W$ and $\bar KW$ whose complex structures are given by right
multiplication by the quaternion $i$. Thus, the gauge transformation
$e^{\int\omega}$ induces the complex linear isomorphism
\begin{equation}
\label{eq:kerD_omega}
\ker D_{\omega}\to H^{0}_{h}(\tilde{W})\,, \qquad\psi\mapsto \psi
e^{\int\omega}\, ,
\end{equation}
where $H^{0}_{h}(\tilde{W})$ denotes the complex vector space of holomorphic
sections of $W$ with monodromy $h=e^{\int\omega}$. The logarithmic spectrum
therefore is the locus of harmonic forms
\begin{equation}
\label{eq:locus}
\Specs(W,D) = \{ \omega\in\Harm(T^2,\C) \mid \ker D_\omega \neq 0 \}\subset \Harm(T^2,\C)
\end{equation}
for which $D_{\omega}$ has a non--trivial kernel.  But $D_\omega$ is a
holomorphic family of elliptic operators over $\Harm(T^2,\C)$ which implies
that the logarithmic spectrum $\Specs(W,D)$, and hence also the spectrum
$\Spec(W,D)=\Specs(W,D)/\Gamma^{*} $ as a quotient by $\Gamma^*$, are analytic
varieties in $\Harm(T^2,\C)$ and $\Hom(\Gamma,\C_*)$ respectively.

\subsection{Homogeneous bundles}
At this stage it is instructive to discuss an explicit example, namely the
spectrum of a homogeneous torus in $S^4$.  Such a conformally immersed torus
$f\colon T^2 \to S^4$ is a $T^2$--orbit of the M\"obius group and hence a
product of two circles in perpendicular planes in~$S^3$. The induced
holomorphic line bundle $V/L$ is therefore invariant under translations by
$T^2$ and has zero degree. More generally, we call a quaternionic holomorphic
line bundle $W$ over a torus $T^2$ {\em homogeneous} if for each $a\in T^2$
there is a holomorphic bundle isomorphism $T_a\colon W\to a^*W$.  Such bundles
always have degree zero.  What makes it possible to explicitly compute the
spectrum of a homogeneous line bundle is the fact that the holomorphic
structure $D=\delbar+Q$ has constant $Q$ in an appropriate trivialization.
\begin{lemma} 
  Let $W$ be a quaternionic line bundle over a torus $T^2$ with holomorphic
  structure $D=\delbar +Q$. If $Q\neq 0$, then $W$ is homogeneous if and only
  if $W$ is, up to tensoring by a $\Z^2$--bundle, holomorphically isomorphic
  to the trivial $\C^2$--bundle with holomorphic structure
  \[
  \hat D =\begin{pmatrix} \hat\delbar & - \bar{\hat{q}}\\
    \hat{q} & \hat\partial
  \end{pmatrix}
  \]
  where $\hat{q}\in H^0(K)$ is ``constant."  The Willmore integrand of the
  holomorphic structure $D$ is given by $<Q\wedge
  *Q>=i\hat{q}\wedge\bar{\hat{q}}$.
\end{lemma}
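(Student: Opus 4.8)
\emph{Plan.} I would work throughout in the complex picture, viewing the quaternionic line bundle $W$ as a complex rank two bundle (right multiplication by $i$ being the complex structure) and splitting it into the $\pm i$--eigenbundles of the complex structure $J$, say $W=L\oplus\overline{L}$, where the quaternionic structure (left multiplication by $j$) provides an antilinear bundle isomorphism identifying the second summand with the conjugate $\overline{L}$. With respect to this splitting the $J$--commuting part $\delbar$ of $D$ is diagonal and equips $L$ with a holomorphic line bundle structure (and $\overline{L}$ with the conjugate one), while the $J$--anticommuting part $Q\in\Gamma(\bar K\End_-(W))$ is off diagonal, hence encoded by a single bundle map $q\colon L\to\bar K\,\overline{L}$; the reality condition coming from the quaternionic structure forces the companion entry to be $-\bar q$, which is exactly the shape of $\hat D$. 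The first task is then to record that homogeneity of the quaternionic holomorphic bundle $(W,D,J)$ renders the whole operator $D$, together with its $J$--splitting, translation invariant up to gauge, so that it descends to homogeneity of the holomorphic line bundle $L$ and to translation invariance of the coupling $q$.

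Granting this, the forward implication runs as follows. Since $L$ is a translation invariant holomorphic line bundle on $T^2$ it has $\deg L=0$ and hence is flat; realizing $L$ by a flat unitary character one has $\overline{L}\cong L\invers$. Now the hypothesis $Q\neq 0$ enters decisively: $q$ is a \emph{nonzero} section over $T^2$ of $\bar K\,\overline{L}\,L\invers\cong\bar K\,L^{-2}$, and in a flat frame this section has constant coefficients by translation invariance. A constant--coefficient section of the flat bundle $\bar K\,L^{-2}$ descends from the universal cover to $T^2$ only if its monodromy is trivial, i.e.\ only if $L^{2}$ is holomorphically trivial. Thus $L$ is a two--torsion point of $\Jac(T^2)$, that is, a $\Z^2$--bundle. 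Tensoring $W$ by the $\Z^2$--bundle $L$ makes the eigenbundle $L\otimes L=L^{2}$ trivial, so after this twist $W=\underline{\C}^2$ with $\delbar$ the standard operator (and $\hat\partial$ on the conjugate factor); the now globally constant $q$ corresponds, under the conjugate identification of the second summand, to a constant $\hat q\in H^0(K)$, yielding the normal form $\hat D$.

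The converse is comparatively soft. If $W$, after tensoring by some $\Z^2$--bundle, is holomorphically $(\underline{\C}^2,\hat D)$ with $\hat q=c\,dz$ constant, then the holomorphic one--form $dz$ is translation invariant, so $a^*\hat D=\hat D$ for every $a\in T^2$ and the tautological identification of $\underline{\C}^2$ with its translate is a holomorphic isomorphism; hence $(\underline{\C}^2,\hat D)$ is homogeneous. Flat bundles, in particular $\Z^2$--bundles, are homogeneous, and a tensor product of homogeneous bundles is homogeneous, so $W$ itself is homogeneous. Finally the Willmore integrand is a direct computation: inserting the off diagonal $Q$ into the pairing $<Q\wedge *Q>$ and using that the Hodge star acts as multiplication by $-i$ on $(1,0)$--forms and by $+i$ on $(0,1)$--forms gives $<Q\wedge *Q>=i\,\hat q\wedge\bar{\hat q}$.

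I expect the main obstacle to lie in the bookkeeping of the first paragraph: one must check that the homogeneity isomorphisms $T_a$ can be taken to respect $J$ (equivalently, that $J$ and the eigenbundle splitting are themselves translation invariant) so that translation invariance genuinely descends to $L$ and conjugates $q$ equivariantly, and one must keep the $K$ versus $\bar K$ types straight through the conjugation of the second summand, so that the constant coupling really produces a \emph{holomorphic} $\hat q\in H^0(K)$ rather than an antiholomorphic form. It is worth emphasizing where $Q\neq 0$ is used: without it $L$ could be an arbitrary degree zero bundle that no $\Z^2$--twist could trivialize, whereas a nonzero $q$ rigidifies the flat bundle $L$ to two--torsion, which is precisely the content of the phrase ``up to tensoring by a $\Z^2$--bundle.''
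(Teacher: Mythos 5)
Your converse direction, the eigenbundle bookkeeping ($W=W_+\oplus W_+j$ with off--diagonal coupling $q$ and reality forcing the entry $-\bar{\hat q}$), and the Willmore integrand are all fine and consistent with the paper. The forward direction, however, breaks at the step ``in a flat frame this section has constant coefficients by translation invariance.'' Homogeneity does \emph{not} make $q$ translation invariant: the isomorphisms $T_a$ act on the trivializing section of the eigenline by complex constants $\lambda_a$, and since for $Q\neq 0$ the only holomorphic automorphisms of $(W,J,D)$ are real scalars (an automorphism must commute with $J$, hence be a complex constant, and commute with $Q$, hence be real), the phases cannot be normalized away. Intertwining $Q$ therefore only yields $a^*q=u_a^2\,q$ with $u_a$ the unitary part of $\bar\lambda_a$ --- this is exactly equation \eqref{eq:q-shift} in the paper's proof, and it is the heart of the matter that your first paragraph skips. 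In a flat unitary frame of $L^{-2}$ this relation makes $q$ a constant times a unitary exponential $e^{i\langle\xi,z\rangle}$, whose monodromy merely \emph{extends} the character $\chi^2$ of $L^{-2}$ from $\Gamma$ to $\R^2$; nothing forces $\chi^2=1$. Your conclusion that $L=W_+$ is two--torsion and that the twisted $\delbar$ is standard is in fact false: the operator $\hat D$ with diagonal $\delbar_0-\bar{\hat\alpha}$ for a \emph{generic} constant $\hat\alpha\in H^0(K)$ and constant $\hat q\neq 0$ has translation--invariant coefficients, hence is homogeneous, yet its eigenbundle $W_+$ is an arbitrary degree--zero point of $\Jac(T^2)$, which no two--torsion twist can trivialize. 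Read the lemma's normal form accordingly: $\hat\delbar$ is trivial \emph{plus a constant form} $\bar{\hat\alpha}$; the paper trivializes it only after the lemma, at the cost of shifting the logarithmic spectrum by $\alpha+\bar\alpha$, not by a $\Z_2$--twist.

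The correct mechanism uses the frame opposite to yours. Write $\delbar=\delbar_0-\bar\alpha$ and take a global $\delbar_0$--holomorphic trivialization $\psi$ of $W_+$ over $T^2$; then $q$, defined by $Q\psi=\psi jq$, is a \emph{globally defined} $(1,0)$--form on the torus, so in the relation $a^*q=u_a^2q$ the parameter $a$ runs over $T^2$ itself and $u^2$ is a character of the torus: $u^2=e^{\int\eta}$ with $\eta\in\Gamma^*$ integral. Hence $q=e^{\int\eta}\hat q$ with $\hat q$ constant, and the advertised twist is by the flat bundle $R$ with monodromy $e^{\frac12\int\eta}\colon\Gamma\to\Z_2$: tensoring by $R$ absorbs the half--phase, makes the coupling constant, and shifts $\bar\alpha$ by $-\tfrac12\eta''$ --- it does not, and need not, trivialize $W_+$. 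In your flat--unitary frame the same information degenerates into the vacuous statement that some character of $\R^2$ extends $\chi^2$, which is why your argument could not locate the $\Z_2$--bundle correctly. (Your flagged type worry resolves as you hoped: $Q\in\Gamma(\bar K\End_-(W))$ corresponds to $q$ of type $K$ as a map $W_+\to\overline{W}_+$, as in \eqref{eq:hol_structure_I}, so the invariant coupling is indeed a holomorphic $\hat q\in H^0(K)$; that was only a notational slip, not the gap.)
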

\begin{proof}
  Let $J\in\Gamma(\End(W))$ denote the complex structure of $W$. The
  $i$--eigenspace $W_+$ of $J$ is a degree zero complex line bundle with
  holomorphic structure $\delbar$. Since $W_+$ has zero degree $\delbar
  =\delbar_0 -\bar\alpha$ with $\alpha\in H^0(K)$ a holomorphic $1$--form and
  $\delbar_0$ a trivial holomorphic structure. Let $\psi\in \Gamma(W_+)$ be a
  holomorphic trivialization with respect to $\delbar_0$, that is
  $\dbar_0\psi=0$.  In this trivialization 
  \[
  Q\psi=\psi j q
  \]
  for some nonzero $q\in \Gamma(K)$ since $Q$ anticommutes with $J$.  For each
  $a\in T^2$ the holomorphic bundle isomorphism $T_a$ intertwines the complex
  and holomorphic structures on $W$ and $a^*W$, namely
  \[
  (a^*J)T_a=T_aJ \,,\quad (a^*\delbar) T_a = T_a\delbar\,,\quad (a^*Q)T_a =
  T_aQ\,.
  \]
  Evaluating these conditions on the trivializing section $\psi$ gives
  $T_a\psi = (a^*\psi) \lambda_a$ with $\lambda_a\in\C$ and 
  \begin{equation}\label{eq:q-shift}
  a^*q = u_a^2 q\,,
  \end{equation}
  where $u_a$ is the unitary part of $\overline{\lambda_a}$.  This implies
  that $u^2\colon T^2 \to S^1$ is a representation if $q$ is nonzero, and
  hence $u^2 =e^{\int \eta}$ with $\eta\in\Gamma^*$ an integer period harmonic
  form.  Let $R$ be the flat real bundle defined by the representation
  $e^{\frac{1}{2}\int\eta}:\Gamma\to\Z_2$ and denote by $\varphi$ a parallel
  section of $R$ with monodromy $e^{\frac{1}{2}\int\eta}$. Viewing
  $e^{\frac{1}{2}\int\eta}$ as a function on the universal cover $\R^2$, we
  see that $\hat{\psi}=\psi\otimes\phi \,e^{-\frac{1}{2}\int\eta}$ is a
  trivializing section of $W\otimes R$. By construction
  $\hat{\psi}\in\Gamma((W\otimes R)_+)$ and
  \[
  \hat\delbar\hat{\psi}=\hat{\psi}(\bar\alpha-\frac{1}{2}\eta'')=\hat{\psi}\bar{\hat{\alpha}}
  \]
  with $\hat{\alpha}\in H^0(K)$.  Moreover, $\hat{Q}$ is constant in this
  trivialization since
  \[
  \hat{Q}\hat{\psi}=(Q\psi)\otimes\phi \,e^{-\frac{1}{2}\int\eta}=\psi jq  \otimes\phi \,e^{-\frac{1}{2}\int\eta}=   \hat{\psi}j \,e^{-\int\eta}q
  \]
  and by \eqref{eq:q-shift} the $(1,0)$--form $\hat{q}=e^{-\int\eta}q$ is
  translation invariant and thus holomorphic. The form of $\hat{D}$ now
  follows from trivializing $W\otimes R$ using the frame $\psi,\psi j$ and
  \eqref{eq:hol_structure_I}.
  \end{proof}

  We are now in a position to calculate the spectrum of a homogeneous bundle
  $W$: tensoring by the $\Z_2$--bundle given by the representation
  $e^{-\tfrac{1}{2}\int\eta_0}$ for $\eta_0\in\Gamma^*$ relates the spectra of
  $D$ and $\hat{D}$ by this representation. In particular, the logarithmic
  spectra
\[
\Specs(D)=\tfrac{1}{2}\eta_0+\Specs(\hat{D})
\]
are related via a shift by the half lattice vector $\tfrac{1}{2}\eta_0$.
Ignoring this shift for the moment, we may assume that the holomorphic
structure $D$ on $W$ already has constant $Q$ and $D$ is given as in the
lemma.  Moreover, replacing the complex holomorphic structure
$\delbar=\delbar_0-\bar{\alpha}$ in $D=\delbar+Q$, where $\alpha\in H^0(K)$,
by the trivial holomorphic structure $\delbar_0$ results in an additional
shift of $\Specs(D)$ by $\alpha+\bar{\alpha}$ .  Thus, we may assume that
$\delbar$ is already trivial. The holomorphic family of elliptic operators
${D}_{\omega}$, parametrized by $\omega\in \Harm(T^2,\C)$, then has the form
\[
{D}_{\omega} = \begin{pmatrix} {\delbar} + \omega'' & - \bar{ q}\\
     {q} & \partial + \omega'
     \end{pmatrix}
\]
with $q\in H^0(K)$.
Therefore ${D}_\omega$ commutes with translations of the torus $T^2$ so that the
finite dimensional kernel of ${D}_\omega$ is spanned by eigenlines of the
translation operators on $\Gamma(\underline{\C}^2)$. These are given by
the Fourier modes $\psi_\eta = v e^{\int\eta}$ with $v\in\C^2$ and
$\eta\in\Gamma^*$.  From
\begin{equation}
\label{eq:ker_D_omega}
{D}_\omega\psi_\eta = \begin{pmatrix} (\eta +\omega)''& -\bar{ q}\\
     {q} & (\eta +\omega)'
     \end{pmatrix} \psi_\eta
   \end{equation}
we see that $\omega\in \Specs({D})$ if and only if there exists
$\eta\in\Gamma^*$ such that
\begin{equation}
\label{eq:spec_homogeneous}
(\omega+ \eta)'(\omega  +\eta)'' + |q|^2 =0\,.
\end{equation}
We first discuss the vacuum spectrum, that is $Q=0$. In this case the
logarithmic spectrum
 \[
\Specs(\delbar) = \{ \omega\in\Harm(T^2,\C) \, \mid
\,\omega'=\eta'\, \text{ or } \, \omega''=\eta'' \text{ for }
\eta\in\Gamma^*\}
\]
consists of $\Gamma^*$--translates of the complex lines $H^0(K)$ and
$\overline{H^0(K)}$ in $\Harm(T^2,\C)$.  Therefore, the vacuum spectrum
$\Spec(\delbar)\subset \Hom(\Gamma,\C_{*})$ is the union
\begin{equation}\label{eq:vacuumspec}
\Spec(\delbar)=\exp(H^0(K))\cup \exp(\overline{H^0(K)})
\end{equation}
which is a singular curve with double points at the lattice of real
representations. The normalization $\Sigma_0$ of the vacuum spectrum is
disconnected and consists of two copies of $\C$ which are exchanged under the
real structure $\rho$. By adding two points ``at infinity"
$\Sigma_0$ can be compactified to two copies of $\P^1$.  From
\eqref{eq:ker_D_omega} we see that the kernel $\ker \delbar_\omega$ for
$\omega\in\Specs(\delbar)$ is spanned by $(0, e^{\int\eta})$ or
$(e^{\int\eta},0)$, depending on whether $\omega'=-\eta'$ or
$\omega''=-\eta''$. Thus, $\ker \delbar_\omega$ for $\omega
\in\Specs(\delbar)$ is 1--dimensional except for double points of the
spectrum, where it is 2--dimensional.

If $Q\neq 0$ then \eqref{eq:spec_homogeneous} shows that the logarithmic
spectrum $\Specs({D})$ is a $\Gamma^*$--periodic union of conics which are
asymptotic to the logarithmic vacuum spectrum $\Specs(\delbar)$. From
\eqref{eq:spec_homogeneous} we also see that the double point at the trivial
representation in the spectrum $\Spec(\delbar)$ is resolved into a handle in
the spectrum $\Spec({D})$. This has the effect that the normalization $\Sigma$
of the spectrum $\Spec(D)$ is connected with two ends asymptotic to
$\exp(H^0(K))$ and $\exp(\overline{H^0(K)})$ respectively. Thus, $\Sigma$ can
be compactified to $\P^1$ by adding two points at infinity which are
interchanged by the real structure $\rho$. Moreover, from
\eqref{eq:ker_D_omega} we can calculate that the kernel of ${D}_{\omega}$ is
generically one dimensional and approaches the vacuum kernel
$\ker{\delbar}_{\omega}$ near the ends of $\Sigma$.

We conclude this discussion with the case that the homogeneous bundle $W$ has
a 2--dimensional linear system $H\subset H^0(W)$, which is certainly the case
when $W$ is the induced bundle $V/L$ from a homogeneous torus $f:T^2\to S^3$.
Then one can show that the complex holomorphic structure $\hat{\delbar}$ is
necessarily trivial. The half lattice vector $\tfrac{1}{2}\eta_0$ which shifts
the spectrum of $\hat{D}$ to that of the homogeneous torus $f:T^2\to S^3$ is
its induced spin structure.  Since the holomorphic structure on $V/L$ has
sections without monodromy the trivial representation has to be a point of the
spectrum $\Spec(V/L)$.  From \eqref{eq:spec_homogeneous} we see that $|q|^2$
has to be bounded below by the minimum of $|\tfrac{1}{2}\eta_0+\eta|^2$ for
$\eta \in \Gamma^*$. This lower bound on the length of $|q|^2$ translates into
a lower bound on the Willmore energy $\mathcal{W}(f)$ of the homogeneous torus
depending on its conformal and induced spin structures, attaining its minimum
$2\pi^2$ at the Clifford torus.
\begin{figure}[h]
\begin{center}
\begin{minipage}[t]{4cm}
\resizebox{3.5cm}{!}{\includegraphics{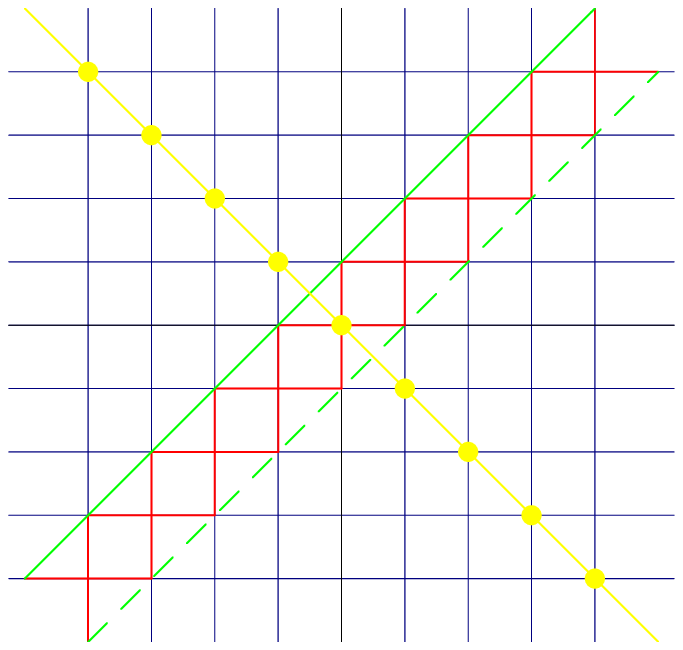}}
\center{$\Specs(W,\delbar)$}
\end{minipage}
\qquad
\begin{minipage}[t]{4cm}
\resizebox{3.5cm}{!}{\includegraphics{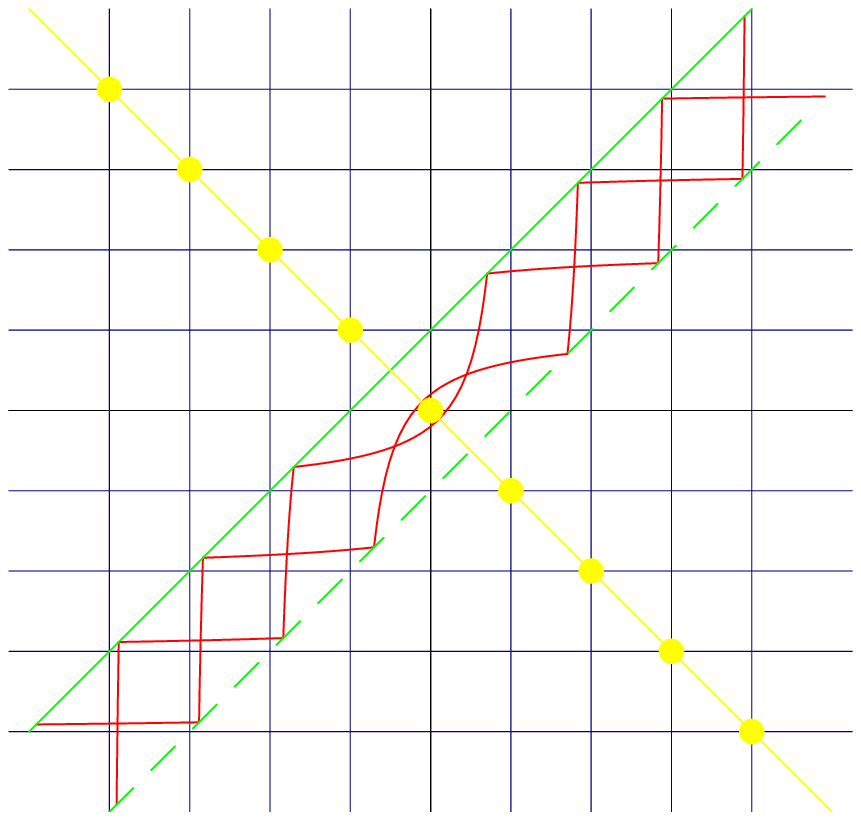}}
\center{$0\not\in\Specs(W,\delbar +Q)$}
\end{minipage}
\qquad
\begin{minipage}[t]{4cm}
\resizebox{3.5cm}{!}{\includegraphics{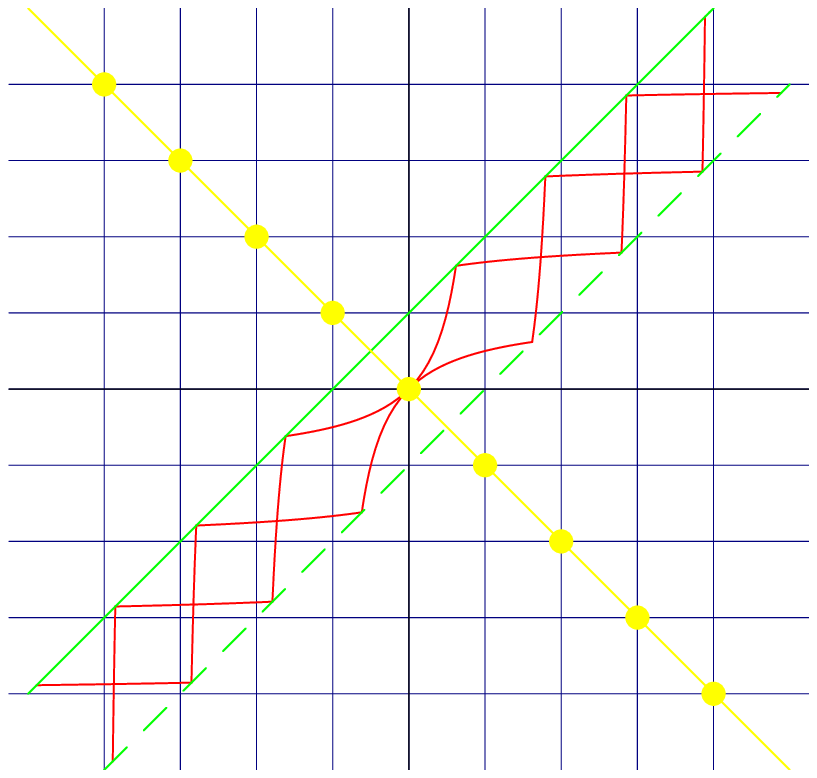}}
\center{$0\in\Specs(W,\delbar +Q)$}
\end{minipage}
\end{center}
\end{figure}
\begin{figure}[h]
\begin{center}
\begin{minipage}[t]{3.5cm}
\resizebox{3.5cm}{!}{\includegraphics{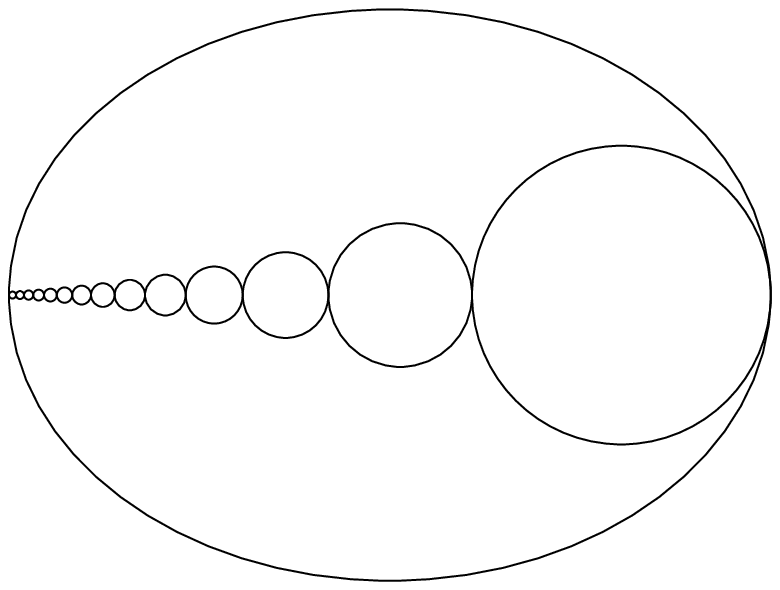}}
\center{$\Spec(W,\delbar)$}
\end{minipage}
\qquad
\begin{minipage}[t]{3.5cm}
\resizebox{3.5cm}{!}{\includegraphics{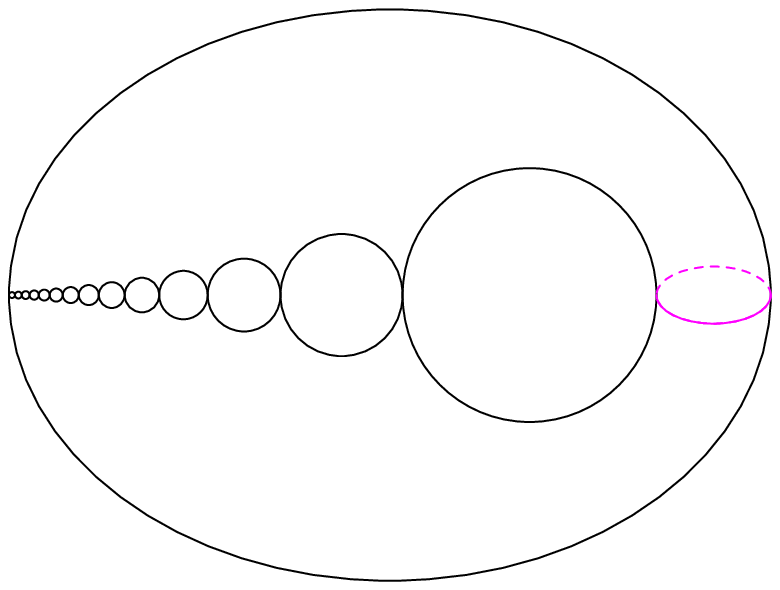}}
\center{$\Spec(W,\delbar+Q)$}
\end{minipage}
\caption{Vacuum spectrum and spectrum of a homogeneous torus.}
\label{fig:tangent0}
\end{center}
\end{figure}
\subsection{The spectral curve and the kernel bundle}
The behavior of the spectrum of a homogeneous line bundle is in many ways
reminiscent of the general case. The following theorem collects some of the
results about the structure of the spectrum of a quaternionic holomorphic line
bundle needed in the present paper and proven in \cite{ana}. We may assume the
line bundle to have zero degree since otherwise its spectrum is finite or all
of $\Hom(\Gamma,\C_{*})$.
\begin{theorem}\label{thm:main1} 
Let $W$ be a quaternionic line bundle of
degree zero over a torus $T^2$ with holomorphic structure $D=\delbar +Q$. 
\begin{enumerate}
\item The spectrum $\Spec(W,D)$ is a $1$--dimensional analytic variety in
  $\Hom(\Gamma,\C_{*})$ invariant under the real structure $\rho(h)=\bar{h}$.
\item The spectrum $\Spec(W,D)$ is asymptotic to the vacuum spectrum
  $\Spec(W,\delbar)$: outside a sufficiently large compact set in
  $\Hom(\Gamma,\C_{*})$ the spectrum $\Spec(W,D)$ is contained in an
  arbitrarily small tube around $\Spec(W,\delbar)$. Away from the double
  points of the vacuum spectrum outside this compact set the spectrum is a
  graph over the vacuum spectrum. Near the double points of the vacuum
  spectrum outside this compact set the spectrum is a smooth annulus or two
  discs intersecting in a double point.
\item For $h=e^{\int\omega}\in\Spec(W,D)$ the operator $D_{\omega}$
  generically has a $1$--dimensional kernel, that is, $\dim
  H^0_{h}(\tilde{W})=1$ away from a discrete set in $\Spec(W,D)$.
\end{enumerate}
\end{theorem}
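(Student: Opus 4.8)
The theorem collects the spectral analysis carried out in full in \cite{ana}; the plan is to read off each assertion from the properties of the holomorphic family $D_\omega$ introduced in \eqref{eq:D_omega_def}, \eqref{eq:D_omega}.

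For (i) the starting point is that $D_\omega$ is a holomorphic family, parametrized by $\omega\in\Harm(T^2,\C)\cong\C^2$, of first order elliptic operators of index zero between the complex rank $2$ bundles $W$ and $\bar K W$. I would localize the kernel condition \eqref{eq:locus} by a Grushin (Lyapunov--Schmidt) reduction: splitting off a sufficiently large finite dimensional subspace, the Schur complement produces on each coordinate patch a holomorphic family of square matrices whose determinant $\det D_\omega$ is a local holomorphic function vanishing exactly on $\Specs(W,D)$. These local determinants patch to exhibit the logarithmic spectrum as the zero divisor of a holomorphic section, hence as an analytic hypersurface; since the ambient space is $\C^2$ this makes $\Specs(W,D)$, and therefore $\Spec(W,D)=\Specs(W,D)/\Gamma^*$, a $1$--dimensional analytic variety, provided the determinant is not identically zero --- which follows from the invertibility of $D_\omega$ for large $\omega$ established in (ii). The invariance under $\rho(h)=\bar h$ is the one elementary point: right multiplication by the quaternion $j$ is a quaternionic--linear, $i$--antilinear bundle automorphism of $W$, so $\psi\mapsto\psi j$ sends a holomorphic section with monodromy $h\in\C_*$ to one with monodromy $\bar h$, whence $\bar\omega\in\Specs(W,D)$ whenever $\omega\in\Specs(W,D)$.

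The heart of the argument is (ii), the asymptotic comparison with the vacuum. I would write $D_\omega=\delbar_\omega+Q$ and treat $Q$ as a perturbation of the explicitly understood vacuum family $\delbar_\omega$. Decomposing $W$ along the $i$--eigenbundle $W_+$ of $J$ reduces $\delbar$ to a holomorphic structure on the degree zero complex line bundle $W_+$, which makes the vacuum spectrum the union of two $\Gamma^*$--periodic families of complex lines as in \eqref{eq:vacuumspec}, with double points along the lattice of real representations. A Fourier decomposition into translation eigenmodes then yields, outside a large compact set and away from the vacuum double points, uniform resolvent bounds for $\delbar_\omega$ on the complement of its one dimensional kernel. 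Feeding these into a Neumann series shows that $D_\omega=\delbar_\omega+Q$ acquires a nontrivial kernel only inside an arbitrarily thin tube about $\Spec(W,\delbar)$, and an implicit function argument there exhibits $\Spec(W,D)$ as a graph over the vacuum spectrum. Near a vacuum double point the kernel of $\delbar_\omega$ is two dimensional, so the Grushin reduction produces a $2\times2$ holomorphic matrix whose vanishing determinant is, to leading order, a quadratic equation in the local coordinates; whether that quadratic is irreducible or factors --- equivalently, whether the Fourier coefficient of $Q$ coupling the two relevant modes is nonzero, exactly as in the homogeneous model \eqref{eq:spec_homogeneous} --- decides between a smooth annulus and two discs intersecting in a double point.

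For (iii), on the locally irreducible $1$--dimensional spectrum the kernel dimension $\dim\ker D_\omega=\dim H^0_h(\tilde W)$ is upper semicontinuous, so the locus where it exceeds its generic value is a proper, hence discrete, analytic subset; concretely it is where the reduced matrix drops rank by at least two, a codimension two condition in $\C^2$. The main obstacle lies entirely in (ii): the delicate part is securing resolvent estimates for $\delbar_\omega$ uniformly as $|\omega|\to\infty$ while controlling the perturbation $Q$ simultaneously near the infinitely many vacuum double points accumulating at infinity. These estimates, the technical core of \cite{ana}, are what force the spectrum into its asymptotically vacuum--like shape.
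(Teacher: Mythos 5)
The paper contains no internal proof of this theorem: it is explicitly presented as a collection of results ``proven in \cite{ana}'', with the surrounding text only indicating the mechanism --- $\Specs(W,D)$ as the vanishing locus of the determinant of the index-zero holomorphic family $D_\omega$ from \eqref{eq:D_omega_def}, $\rho$-invariance from right multiplication by $j$, and asymptotics by perturbing the Fourier-analyzed vacuum exactly as in the homogeneous model \eqref{eq:spec_homogeneous}. Your outline reproduces this strategy point for point (Grushin reduction to a holomorphic determinant, $\psi\mapsto\psi j$ for the real structure, Neumann series against $\delbar_\omega$ with a $2\times 2$ reduction at the vacuum double points, semicontinuity for the generic kernel dimension), and, like the paper itself, it defers the genuinely hard content --- the resolvent estimates uniform as $|\omega|\to\infty$ near the lattice of vacuum double points --- to \cite{ana}, so it matches the paper's own treatment as closely as is possible for a statement the paper quotes rather than proves.
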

Normalizing the spectrum we obtain a Riemann surface, the {\em spectral curve}
of the quaternionic holomorphic line bundle.

\begin{definition}
  \label{def:spectral_curve} Let $W$ be a quaternionic line bundle of degree
  zero over a torus $T^2$ with holomorphic structure $D=\delbar +Q$. The {\em
    spectral curve} $\Sigma$ of $W$ is the normalization
  $h\colon\Sigma\to\Spec(W,D)$ of the spectrum $\Spec(W,D)$ to a (not
  necessarily connected) Riemann surface.

The {\em logarithmic spectral curve} is the Riemann surface $\tilde{\Sigma}$
normalizing the logarithmic spectrum $\omega\colon
\tilde{\Sigma}\to\Specs(W,D)$. By the universal property of the normalization
$\tilde{\Sigma}$ is a $\Gamma^*$-periodic Riemann surface whose quotient under
the lattice $\Gamma^*$ is $\Sigma=\tilde{\Sigma}/\Gamma^*$.

The {\em vacuum spectral curve} $\Sigma_0$ is the Riemann surface which
normalizes the spectrum $\Spec(W,\delbar)$ of the complex holomorphic
structure $\delbar$.
\end{definition}

The normalization map $\omega\colon \tilde{\Sigma}\to \Specs(W,D)$ in
Definition~\ref{def:spectral_curve} pulls back the holomorphic family
$D_{\omega}\colon \Gamma(W)\to\Gamma(\bar{K}W)$ to a holomorphic family of
elliptic operators over the Riemann surface $\tilde{\Sigma}$.  Thus, by
Theorem~\ref{thm:main1} the kernels of $D_{\omega}$ give rise to a holomorphic
line bundle
\[
\tilde{\mathcal{L}}\to\tilde{\Sigma}\,,
\]
a subbundle of the trivial $\Gamma(W)$--bundle over $\tilde{\Sigma}$: the
fibers $\tilde{\mathcal{L}}_{\tilde{x}}= \ker D_{\omega(\tilde{x})}$ coincide
with the kernels of $D_{\omega(\tilde{x})}$ away from the
discrete set of points $\tilde{x}\in\tilde{\Sigma}$ where the kernel dimension
is greater then one. Moreover, on this discrete set
$\tilde{\mathcal{L}}_{\tilde{x}}\subset \ker D_{\omega(\tilde{x})}$.

We have seen in \eqref{eq:kerD_omega} that $\ker
D_{\omega}$ is isomorphic to $H^0_{h}(\tilde{W})$ where $h=e^{\int \omega}$.
Since the spectral curve $\Sigma=\tilde{\Sigma}/\Gamma^*$ is the quotient
under the action of the lattice $\Gamma^*$ of integer harmonic forms, the
bundle $\tilde{\mathcal{L}}$ descends to a holomorphic line bundle
\begin{equation}
\label{eq:kernel-bundle}
\mathcal{L}\to\Sigma\,,\qquad \mathcal{L}_x=\tilde{\mathcal{L}}_{\tilde{x}} e^{\int\omega(\tilde{x})}
\end{equation}
over $\Sigma$, where $x=\tilde{x}+\Gamma^*$.  Therefore, the fiber
$\mathcal{L}_x\subset H^0_{h(x)}(\tilde{W})$ of $\mathcal{L}$ over
$x\in\Sigma$ consists of holomorphic sections of $W$ with monodromy $h(x)$ and
$\mathcal{L}_x= H^0_{h(x)}(\tilde{W})$ away from a discrete set 
in $\Sigma$.  
\begin{definition}\label{def:kernel_bundle}
  Let $W$ be a quaternionic holomorphic line bundle of degree zero over a
  torus $T^2$ with spectral curve $\Sigma$. The complex holomorphic line
  bundle $\mathcal{L}$ over $\Sigma$ given in \eqref{eq:kernel-bundle} with
  generic fiber $\mathcal{L}_x= H^0_{h(x)}(\tilde{W})$ over $x\in\Sigma$ is
  called the {\em kernel bundle}.
\end{definition}
The real structure $\rho$ on $\Sigma$ acts on the kernel bundle via
multiplication by the quaternion $j$: given a holomorphic section $\psi\in
H^0_{h(x)}(\tilde{W})$ the section $\psi j$ has monodromy
$\overline{h(x)}=h(\rho(x))$.  Therefore, the real structure $\rho$ is fixed
point free and the kernel bundle is compatible with the real structure, that
is, $\rho^*\mathcal{L}=\mathcal{L}j$.
In particular,  $\Sigma/\rho$
normalizes $\Spec_{\H}(W,D)$ to a non--orientable (if $\Sigma$ is connected)
Riemann surface by \eqref{eq:mod-rho}.

From Theorem~\ref{thm:main1} and the fact that $\Sigma$ cannot have compact
components, we see that $\Sigma$ has one or two ends and at most two
components each of which contains an end and which are exchanged under $\rho$.
In case $\Sigma$ has one end $\Sigma$ is connected and the genus of $\Sigma$
is necessarily infinite. In the finite genus case $\Sigma$ has two ends. We
summarize the discussion so far:
\begin{theorem}\label{thm:analysis}
  Let $W$ be a quaternionic holomorphic line bundle of degree zero over a
  torus~$T^2$. Then the spectral curve $\Sigma$ is a Riemann surface with a
  fixed point free real structure~$\rho$, one or two ends and at most two
  components each of which contains an end.  Depending on whether $\Sigma$ has
  one or two ends the genus of $\Sigma$ is infinite and $\Sigma$ is connected,
  or the genus of $\Sigma$ is finite.
The kernel bundle
 $\mathcal{L}$ is a complex holomorphic line bundle over $\Sigma$ compatible
 with the real structure, that is, $\rho^*\mathcal{L}=\mathcal{L}j$.
\end{theorem}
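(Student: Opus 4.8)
\emph{Proof plan.}
The plan is to assemble the statement from Theorem~\ref{thm:main1}, the explicit analysis of the vacuum spectrum \eqref{eq:vacuumspec}, and the properties of the kernel bundle \eqref{eq:kernel-bundle}. Since Theorem~\ref{thm:main1}~(i) identifies $\Spec(W,D)$ as a $1$--dimensional analytic variety in $\Hom(\Gamma,\C_*)$ invariant under $\rho(h)=\bar h$, its normalization $\Sigma$ is by construction a Riemann surface, and by the universal property of normalization the real structure $\rho$ lifts to an antiholomorphic involution on $\Sigma$, again denoted $\rho$. To see that $\rho$ has no fixed points, suppose $\rho(x)=x$. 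The compatibility $\rho^*\mathcal{L}=\mathcal{L}j$ of the kernel bundle, established in the discussion preceding Definition~\ref{def:kernel_bundle}, would then force the complex line $\mathcal{L}_x=\psi\C$ to satisfy $\mathcal{L}_x=\mathcal{L}_x j$, i.e. $\psi j=\psi\lambda$ for some $\lambda\in\C$. As $j-\lambda$ is an invertible quaternion and $\psi\neq 0$, this is impossible; right multiplication by $j$ maps no complex line of $\H$ to itself.

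Next I would control the ends and components from the asymptotic analysis of Theorem~\ref{thm:main1}~(ii) together with the vacuum spectrum. The normalization of $\Spec(W,\delbar)$ is two copies of $\C$ with two ends, exchanged by $\rho$, whose double points form the $\Z^2$--lattice of real representations accumulating at infinity. By Theorem~\ref{thm:main1}~(ii), outside a large compact set $\Spec(W,D)$ lies in an arbitrarily thin tube around $\Spec(W,\delbar)$, is a graph over the two vacuum sheets away from the double points, and near each double point is a smooth annulus or two discs meeting in a double point. Hence $\Sigma$ has at most two ends, one for each vacuum sheet. Moreover $\Sigma$ has no compact components: a compact $1$--dimensional analytic subvariety of the Stein manifold $\Hom(\Gamma,\C_*)\cong(\C_*)^2$ would be a finite set of points, contradicting $1$--dimensionality. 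Therefore every component contains at least one end, so $\Sigma$ has at most two components, each containing an end, and these are interchanged by $\rho$.

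The dichotomy between finite and infinite genus is the heart of the matter, and I expect it to be the main obstacle, since it requires reading the topology of $\Sigma$ off the asymptotic handle/node pattern. Each double point lying outside the large compact set resolves, by Theorem~\ref{thm:main1}~(ii), either into a smooth annulus, which joins the two vacuum sheets and contributes a handle, or into two discs whose normalization separates the sheets and contributes nothing. If only finitely many double points become annuli, then outside a sufficiently large compact set $\Sigma$ is two disjoint annular ends asymptotic to $\exp(H^0(K))$ and $\exp(\overline{H^0(K)})$; each such end is compactified by a single point at infinity, so $\Sigma$ compactifies to a compact Riemann surface and has finite genus together with exactly two ends. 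If infinitely many double points become annuli, the two sheets are glued along infinitely many handles accumulating at infinity, so the two ends merge into a single end, forcing $\Sigma$ to be connected, and the accumulation of handles forces the genus to be infinite. This establishes the claimed equivalence of one end with connectedness and infinite genus, and of two ends with finite genus.

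Finally, the kernel bundle assertion follows directly from the construction in \eqref{eq:kernel-bundle}: Theorem~\ref{thm:main1}~(iii) guarantees that the kernels of the holomorphic family $D_\omega$ patch into a complex holomorphic line bundle $\mathcal{L}$ over $\Sigma$ with generic fibre $\mathcal{L}_x=H^0_{h(x)}(\tilde W)$. The compatibility $\rho^*\mathcal{L}=\mathcal{L}j$ holds because, for a holomorphic section $\psi$ with monodromy $h(x)$, the section $\psi j$ is holomorphic with monodromy $\overline{h(x)}=h(\rho(x))$, exactly as recorded before Definition~\ref{def:kernel_bundle}.
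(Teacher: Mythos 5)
Your proposal is correct and follows essentially the same route as the paper, which presents Theorem~\ref{thm:analysis} without a separate proof, as a summary of the preceding discussion: normalization of the spectrum from Theorem~\ref{thm:main1}, the action $\psi\mapsto\psi j$ giving $\rho^*\mathcal{L}=\mathcal{L}j$ and fixed point freeness of $\rho$, and the asymptotic graph/annulus/double--point analysis for the statements about ends, components, and the genus dichotomy. Your only additions fill in details the paper leaves implicit — the Stein--manifold argument ruling out compact components and the explicit computation that right multiplication by $j$ preserves no complex line — and both are correct.
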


We now return to the case of interest to us when the quaternionic holomorphic
line bundle $W$ is the line bundle $V/L$ induced by a conformal immersion
$f\colon T^2\to S^4$ of zero normal bundle degree. In this situation the
fibers of the kernel bundle $\mathcal{L}$ have a geometric interpretation in
terms of Darboux transformations.  By Definition~\ref{def:kernel_bundle} the
fiber $\mathcal{L}_x\subset H^0_{h(x)}(\widetilde{V/L})$ of $\mathcal{L}$ over
$x\in\Sigma$ consists of a complex line worth of non--trivial sections $\psi$
of $\widetilde{V/L}$ with monodromy $h(x) \in\Hom(\Gamma,\C_{*})$.  As we have
seen in Lemma~\ref{l:prolong} such $\psi$ can be prolonged to sections
$\hat{\psi}\in\Gamma_{h(x)}(\tilde{V})$ with the same monodromy $h(x)$.
Therefore, we have constructed a complex holomorphic line subbundle
\[
\hat{\mathcal{L}}\to\Sigma
\]
of the trivial $\Gamma(\tilde{V})$--bundle over the spectral curve $\Sigma$.
Moreover, Lemma~\ref{l:singular_darboux} shows that for $x\in\Sigma$ the
quaternionic line subbundle $L^x=\hat{\psi}\H$ of the trivial $\H^2$--bundle
$V$ over the torus $T^2$ with $\hat{\psi}\in \hat{\mathcal{L}}_x$ is a
(possibly singular) Darboux transform
\[
f^x\colon T^2\to S^4
\]
of $f$.  On the other hand, evaluating the holomorphic line subbundle
$\hat{\mathcal{L}}$ of the trivial $\Gamma(\tilde{V})$--bundle for fixed $p\in
T^2$ gives the holomorphic map
\[
\hat{F}(p,-)\colon\Sigma\to \P^3\quad\text{where}\quad \hat{F}(p,-)=\hat{\mathcal{L}}(p)\subset \H^2\,.
\]
From the invariance of the kernel bundle we see that
$\rho^*\hat{F}(p,-)=\hat{F}(p,-)j$. This implies that under twistor projection
$\C\P^3\to\H\P^1$ the map $\hat{F}(p,-)$ induces the conformal map
\[
F(p,-)\colon \Sigma\to\H\P^1\,,\quad F(p,x)=f^x(p)
\]
realizing the spectral curve $\Sigma$, in fact its quotient $\Sigma/\rho$
under the real structure $\rho$, as the twistor projection into $S^4$ of a
holomorphic curve in $\P^3$, that is, as a super conformal Willmore surface.

Generally, the $T^2$--family $\hat{F}(p,-)$ of holomorphic curves in $\P^3$
will not be smooth in~$p\in T^2$: second and higher order zeros on $T^2$ of
sections $\psi\in\mathcal{L}_x$ give rise to zeros of the prolonged section
$\hat{\psi}\in\Gamma(\tilde{V})$, which can cause bubbling off phenomena.

The situation simplifies considerably under the assumption that the conformal
immersion $f\colon T^2\to S^4$ has $\mathcal{W}(f)<8\pi$. Then
Lemma~\ref{l:emb_Darboux} ensures that the sections in $\hat{\mathcal{L}}_x$
for $x\in\Sigma$ have no zeros and every non--constant Darboux transform $f^x$
is a conformal embedding.
\begin{theorem}\label{thm:F-map}
  Let $f\colon T^2\to S^4$ be a conformal immersion with trivial normal
  bundle, induced holomorphic line bundle $V/L$, and spectral curve $\Sigma$.
  Then there exists a map
\[
F\colon T^2\times \Sigma\to S^4
\]
with the following properties:
\begin{enumerate}
\item For $x\in \Sigma$ the map $f^x=F(-,x)\colon T^2\to S^4$ is a (possibly
  singular) Darboux transform of $f$. In the non-singular case its Willmore
  energy is $\mathcal{W}(f^x)=\mathcal{W}(f)$. If the Willmore energy
  $\W(f)<8\pi$ then each non--constant Darboux transform $f^x$ is a conformal
  embedding.
\item For $p\in T^2$ the map $F(p,-)\colon \Sigma\to S^4$ is the twistor
  projection of a holomorphic curve $\hat{F}(p,-)\colon \Sigma\to \P^3$
  equivariant with respect to the real structure $\rho$ on $\Sigma$ and
  multiplication by $j$ on $\P^3=\P(\H^2)$.
\item If the Willmore energy of $f$ satisfies $\mathcal{W}(f)<8\pi$, then
  $\hat{F}\colon T^2\times\Sigma\to \P^3 $ is a smooth map which is conformal
  in the first and holomorphic in the second factor.
\end{enumerate}
\end{theorem}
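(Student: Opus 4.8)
The construction in the paragraphs preceding the theorem already supplies every ingredient: the kernel bundle $\mathcal{L}\to\Sigma$, its prolongation $\hat{\mathcal{L}}\to\Sigma$ as a holomorphic line subbundle of the trivial $\Gamma(\tilde V)$--bundle, the Darboux transforms $f^x$ determined by $L^x=\hat\psi\H$ for $0\neq\hat\psi\in\hat{\mathcal{L}}_x$, and the maps $\hat F(p,x)=\hat{\mathcal{L}}_x(p)$ and $F(p,x)=f^x(p)$. The plan is to read off (i) and (ii) from this construction together with the lemmas of Section~\ref{sec:darboux}, and then to prove the joint smoothness in (iii), which is the only new analytic point.

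For (i): by Lemma~\ref{l:singular_darboux} each $\hat\psi\in\hat{\mathcal{L}}_x$ is the prolongation of a non--trivial holomorphic section with monodromy of $V/L$, so $f^x=F(-,x)$ is a (possibly singular) Darboux transform of $f$. In the non--singular case $V=L\oplus L^x$, and Lemma~\ref{l:willmore_darboux} gives $\W(f^x)=\W(f)+2\pi(\deg N_f-\deg K)$; as $f$ has trivial normal bundle and $M=T^2$ we have $\deg N_f=\deg K=0$, hence $\W(f^x)=\W(f)$. When $\W(f)<8\pi$, \eqref{eq:normal_degree} gives $\deg V/L=0$ and therefore $\int_{T^2}|\mathcal H|^2\,d\mathcal A=\W(f)<8\pi$ by \eqref{eq:Willmore-classical}, so Lemma~\ref{l:emb_Darboux} shows that every non--constant $f^x$ is a conformal embedding. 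For (ii): since $\hat{\mathcal{L}}$ is a holomorphic line subbundle of the trivial $\Gamma(\tilde V)$--bundle, evaluation at fixed $p$ is complex linear and exhibits $\hat F(p,-)\colon\Sigma\to\P^3$ as a holomorphic curve with twistor projection $F(p,-)$. The prolongation is quaternionic linear, so $\rho^*\mathcal{L}=\mathcal{L}j$ from Theorem~\ref{thm:analysis} upgrades to $\rho^*\hat{\mathcal{L}}=\hat{\mathcal{L}}j$, i.e.\ $\hat F(p,\rho(x))=\hat F(p,x)j$, which is the asserted $\rho$--equivariance.

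The content of the theorem is (iii), so assume $\W(f)<8\pi$. By Lemma~\ref{l:emb_Darboux} no section $\hat\psi\in\hat{\mathcal{L}}_x$ vanishes on $T^2$: a zero of the underlying $\psi$ would make $f^x$ a singular non--constant Darboux transform, forcing $\ord H\geq 2$ and contradicting the Pl\"ucker estimate used there, while the constant Darboux transforms prolong to nowhere--zero constant sections. Hence $\hat F(p,x)=[\hat\psi_x(p)]$ is well defined for all $(p,x)$. To see joint smoothness I would argue locally on $\Sigma$: near any $x_0$ pick a holomorphic frame $x\mapsto s_x\in\ker D_{\omega(x)}\subset\Gamma(V/L)$ of the kernel bundle. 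Because $D_\omega$ is a holomorphic family of elliptic operators with smooth coefficients, the parameter--dependent elliptic theory of \cite{ana} makes $(p,x)\mapsto s_x(p)$ jointly smooth, holomorphic in $x$ and smooth in $p$; the same then holds for $\psi_x=s_xe^{\int\omega(x)}$ and, after applying the fixed continuous linear prolongation $\hat\psi=\hat\psi_0-\delta^{-1}(\pi d\hat\psi_0)$ of Lemma~\ref{l:prolong}, for $(p,x)\mapsto\hat\psi_x(p)\in\H^2$. This family is jointly smooth and nowhere zero, so its projectivization $\hat F$ is smooth on $T^2\times\Sigma$. Holomorphicity in the second factor is (ii); conformality in the first factor holds because $\hat F(-,x)$ is the twistor lift of the conformal Darboux transform $f^x$ determined by the complex structure on $L^x=\hat{\mathcal{L}}_x\H$, and twistor lifts of conformal maps are conformal.

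The one real obstacle is the joint smoothness in (iii), and this is exactly where the hypothesis $\W(f)<8\pi$ enters. In the general case the prolongation $\hat\psi_x$ develops zeros precisely at second-- and higher--order zeros of $\psi_x$, so the twistor lift $\hat F$ degenerates (bubbling) and no globally smooth family exists; the uniform absence of such zeros furnished by Lemma~\ref{l:emb_Darboux} is what lets the holomorphic--in--$x$, smooth--in--$p$ family of prolongations projectivize to a smooth map. The remaining input --- holomorphic dependence of the kernel bundle on $x$ and smoothness in $p$ --- is the standard parameter--dependent elliptic theory for the holomorphic family $D_\omega$ developed in~\cite{ana}.
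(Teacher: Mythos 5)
Your proposal is correct and takes essentially the same route as the paper, which states this theorem as a summary of the preceding construction: the prolonged kernel bundle $\hat{\mathcal{L}}$, Lemma~\ref{l:singular_darboux} for (i) (with Lemma~\ref{l:willmore_darboux} and $\deg N_f=\deg K=0$ giving $\W(f^x)=\W(f)$), the equivariance $\rho^*\hat{\mathcal{L}}=\hat{\mathcal{L}}j$ for (ii), and Lemma~\ref{l:emb_Darboux} ruling out zeros of the prolonged sections so that no bubbling occurs in (iii). Your local holomorphic frame and parameter--dependent elliptic theory for $D_\omega$ merely make explicit what the paper leaves implicit by deferring to \cite{ana}, and your observation that constant Darboux transforms prolong to nowhere--vanishing constant sections correctly handles the one case the paper glosses over.
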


We conclude this section with a fact which lies at the heart of the integrable
systems approach to conformal maps of $2$--tori into the $4$--sphere: the
spectral curve is a first integral of the discrete flow given by Darboux
transforms on the space of conformal tori in $S^4$ with zero normal bundle
degree.
\begin{theorem}
  Let $f,f^{\sharp}\colon T^2\to S^4$ be conformal immersions so that
  $f^{\sharp}$ is a Darboux transform of $f$. Then the spectral curves
 of $f$ and $f^{\sharp}$ agree, that is,  $\Sigma=\Sigma^\sharp$.
\end{theorem}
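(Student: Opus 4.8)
The plan is to reduce the statement to the equality of the two complex spectra $\Spec(V/L,D)=\Spec(V/L^\sharp,D^\sharp)$ inside $\Hom(\Gamma,\C_*)$. Since $f^\sharp$ is a Darboux transform of $f$ we have $V=L\oplus L^\sharp$, with $\pi$ identifying $L^\sharp\cong V/L$ and $\pi^\sharp$ identifying $L\cong V/L^\sharp$; both immersions have zero normal bundle degree, so by Theorem~\ref{thm:main1} the two spectra are $1$--dimensional analytic varieties, and by Definition~\ref{def:spectral_curve} the spectral curves $\Sigma,\Sigma^\sharp$ are exactly their normalizations, with the real structure in each case induced by complex conjugation on $\Hom(\Gamma,\C_*)$. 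Hence, once the two spectra coincide as subsets, their normalizations together with their real structures agree and $\Sigma=\Sigma^\sharp$.

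To compare the spectra I would, for each $h\in\Hom(\Gamma,\C_*)$, transport nontrivial holomorphic sections with monodromy $h$ between $V/L$ and $V/L^\sharp$ by projecting prolongations along the splitting $V=L\oplus L^\sharp$. Given $\psi\in H^0_h(\widetilde{V/L})$, let $\hat\psi\in\Gamma(\tilde V)$ be its prolongation (Lemma~\ref{l:prolong}), so $d\hat\psi\in\Omega^1(\tilde L)$ and $\hat\psi$ has monodromy $h$. The key point is that $d\hat\psi$ is of type $(1,0)$: applying $d$ to $d\hat\psi$ and reading off the $L^\sharp$--component of $d^2\hat\psi=0$ in the splitting \eqref{eq:splitted_connection} gives $\delta\wedge d\hat\psi=0$, and since $\delta\in\Gamma(K\Hom_+(L,V/L))$ is a type $(1,0)$ isomorphism this forces $d\hat\psi$ to be of type $(1,0)$. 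Therefore $\pi^\sharp\hat\psi$ is a holomorphic section of $V/L^\sharp$ with monodromy $h$, because $D^\sharp(\pi^\sharp\hat\psi)=(\pi^\sharp d\hat\psi)''=0$; this is precisely the section \eqref{eq:double_darboux} underlying Remark~\ref{rem:bianchi}. Running the identical argument for $f^\sharp$ — where $\delta^\sharp$ is a type $(1,0)$ isomorphism by the Darboux condition $*\delta^\sharp=\tilde J\delta^\sharp$ of Lemma~\ref{l:darboux_equivalent} — I would send the prolongation $\widehat{\psi'}$ of any $\psi'\in H^0_h(\widetilde{V/L^\sharp})$ to $\pi\widehat{\psi'}\in H^0_h(\widetilde{V/L})$.

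It then remains to control the vanishing of these projections. One has $\pi^\sharp\hat\psi=0$ exactly when $\hat\psi\in\Gamma(\widetilde{L^\sharp})$, i.e.\ when $\hat\psi$ is $\nabla^\sharp$--parallel; such sections exist (because $\nabla^\sharp$ is flat) and all carry the single monodromy $h^\sharp$ of the parallel section defining $f^\sharp$. Symmetrically $\pi\widehat{\psi'}=0$ forces $\widehat{\psi'}$ to be $\nabla^L$--parallel, which can occur for at most one monodromy and only if $\nabla^L$ is flat. Thus the first map yields $\Spec(V/L,D)\setminus\{h^\sharp\}\subseteq\Spec(V/L^\sharp,D^\sharp)$ and the second yields the reverse inclusion up to at most one point. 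Since both spectra are closed $1$--dimensional analytic varieties and hence have no isolated points, the finitely many excluded representations lie in the respective closures, so the two inclusions upgrade to $\Spec(V/L,D)=\Spec(V/L^\sharp,D^\sharp)$, giving $\Sigma=\Sigma^\sharp$.

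I expect the main obstacle to be the reverse inclusion $\Spec(V/L^\sharp,D^\sharp)\subseteq\Spec(V/L,D)$. The Darboux relation is genuinely asymmetric: although $\nabla^\sharp$ is flat by hypothesis, the connection $\nabla^L$ on $L$ has curvature $R^L=-\delta^\sharp\wedge\delta$, which need not vanish, so in general $f$ is \emph{not} a Darboux transform of $f^\sharp$ and Remark~\ref{rem:bianchi} cannot be invoked with the roles reversed. What rescues the argument is the observation that prolongations of holomorphic sections with monodromy automatically have derivative of type $(1,0)$, so that \emph{plain projection} along the splitting — rather than a Bianchi-type construction requiring a flat $\nabla^L$ — already transports holomorphic sections with monodromy of $V/L^\sharp$ to those of $V/L$. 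The only remaining subtlety is the bookkeeping of the single exceptional monodromy in each direction, which I would absorb using that the spectra are closed analytic curves without isolated points.
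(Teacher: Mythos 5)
Your forward transport is correct, and it is in fact literally the paper's map: since $\pi^\sharp\psi^\sharp=0$, the section $\varphi$ of \eqref{eq:double_darboux} satisfies $\pi^\sharp\varphi=\pi^\sharp\hat\psi$, so your ``plain projection'' of the prolongation coincides with the monodromy preserving map of Remark~\ref{rem:bianchi} and yields $\Spec(V/L,D)\subset\Spec(V/L^\sharp,D^\sharp)$, exactly as in the paper. The genuine gap is your reverse inclusion. Your wedge computation does give $\delta^\sharp\wedge\beta=0$ for $\beta=d\widehat{\psi'}\in\Omega^1(\widetilde{L^\sharp})$, but the type this forces on $\beta$ is governed by the \emph{right} normal structure of $f^\sharp$: by conformality \eqref{eq:conformal} of the immersion $f^\sharp$ there is a unique complex structure $\tilde J^\sharp$ on $L^\sharp$ with $*\delta^\sharp=\tilde J\delta^\sharp=\delta^\sharp\tilde J^\sharp$, and $\delta^\sharp\wedge\beta=0$ forces $*\beta=\tilde J^\sharp\beta$, not a type condition with respect to $J$. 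For $\pi\widehat{\psi'}$ to be $D$--holomorphic you need $(\pi\beta)''=0$ with respect to $J$ on $V/L$, i.e.\ $\pi\circ\tilde J^\sharp=J\circ\pi$ on $L^\sharp$. That identity says precisely that the sphere congruence right--touches $f^\sharp$ as well, i.e.\ that $S$ fully envelopes both surfaces --- by Darboux's classical theorem the isothermic case --- equivalently that $f$ is a Darboux transform of $f^\sharp$. In the forward direction the analogous compatibility, $J^\sharp=\tilde J$ under $\pi^\sharp$, is exactly the left--enveloping condition of Definition~\ref{def:darboux} and Lemma~\ref{l:darboux_equivalent}; in the reverse direction it fails for a generic (non--isothermic) Darboux pair. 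So plain projection does not circumvent the asymmetry you correctly flagged as the main obstacle: it \emph{is} the Bianchi--type map in both directions, and in the reverse direction it produces a section whose $\bar K$--part with respect to $J$ has no reason to vanish. (Note also that if your two--sided projection argument were valid, isospectrality would follow for essentially any transversal pair of conformal immersions, which is absurd.)

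The paper closes the gap differently, and this is the idea missing from your proposal: it proves only the one inclusion $\Spec(V/L,D)\subset\Spec(V/L^\sharp,D^\sharp)$ via Remark~\ref{rem:bianchi}, and then invokes Theorem~\ref{thm:main1}: both spectra are $1$--dimensional analytic varieties in $\Hom(\Gamma,\C_*)$ asymptotic to the vacuum spectrum, and an analytic curve containing another analytic curve with the same asymptotics must coincide with it --- there is no room for extra components of $\Spec(V/L^\sharp,D^\sharp)$, since any such component would itself have to run out to infinity along the vacuum spectrum. Your proof can be repaired by replacing the reverse projection with this asymptotics argument; your final observation that the spectra are analytic curves without isolated points is then the right tool to absorb the exceptional monodromies in the forward direction (where, incidentally, scaling by $j$ shows that both $h^\sharp$ and $\bar h^\sharp$ can occur as monodromies of $\nabla^\sharp$--parallel sections, so there are up to two excluded points, not one --- harmless, but worth recording).
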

\begin{proof}

From Remark~\ref{rem:bianchi} we know that $\Spec(V/L)\subset
\Spec(V/L^{\sharp})$. But the spectrum is an analytic curve in
$\Harm(T^2,\C_*)$ asymptotic to the vacuum spectrum by
Theorem~\ref{thm:main1}. This implies that the spectra of $V/L$ and
$V/L^{\sharp}$ have to agree.
\end{proof}

\section{Conformal $2$--tori in $S^4$ of finite spectral genus}
At this stage we have constructed to each conformal immersion $f\colon T^2\to
S^4$ of a $2$--torus into the 4--sphere with trivial normal bundle a Riemann
surface, the spectral curve ~$\Sigma$, of possibly infinite genus, with one or
two ends and a fixed point free real structure $\rho$. This curve carries a
complex holomorphic line bundle $\hat{\mathcal{L}}\to \Sigma$, the
prolongation of the kernel bundle~$ \mathcal{L}$, which gives rise to Darboux
transforms of $f$.  The line bundle is compatible with the real structure in
the sense that $\rho^*\hat{\mathcal{L}}=\hat{\mathcal{L}}j$.  For these
``spectral data'' to be of algebro--geometric nature, the curve $\Sigma$ has
to complete to a compact Riemann surface and the line bundle
$\hat{\mathcal{L}}$ has to extend holomorphically to the compactification.
For the generic conformal immersion $f$ this will not be possible. But there
are interesting examples of conformal immersions, including constant mean
curvature \cite{PS89,Hi} and (constrained) Willmore tori \cite{S02,cw_tori},
for which the spectral data become algebro--geometric due to the ellipticity
of the underlying variational problems.  From the asymptotic behavior of the
spectral curve $\Sigma$ outlined in Theorem~\ref{thm:main1}, we know that away
from handles each end of $\Sigma$ is asymptotic to a plane.  Thus, if $\Sigma$
has finite genus, and therefore must have two ends, it can be compactified by
adding two points $o$ and $\rho(o)=\infty$ at the ends.
\begin{definition}
  A conformal immersion $f\colon T^2\to S^4$ has {\em finite spectral genus}
  if its spectral curve $\Sigma$ has finite genus. In this case $\Sigma$ has
  two ends and can be completed to a compact Riemann surface $\bar{\Sigma}$ by
  adding two points $o$ and $\rho(o)=\infty$ at the ends.
\end{definition}
In the mathematical physics literature also the term ``finite gap" is used:
asymptotically, $\Spec(V/L,D)$ with $D=\delbar+Q$ lies near the vacuum spectrum
$\Spec(V/L,\delbar)$ which has a lattice of double points. Generically
$\Spec(V/L,D)$ resolves all the double points into handles, but in the finite
spectral genus case only finitely many handles appear, and the ends can be
compactified by adding two points.

We now show that the $T^2$--family of holomorphic curves from $\Sigma$ to
$\P^3$ given by the prolonged kernel bundle $\hat{\mathcal{L}}$ extends
holomorphically to the compactified spectral curve
$\bar{\Sigma}=\Sigma\cup\{o,\infty\}$.
\begin{theorem}
  Let $f\colon T^2\to S^4$ be a conformal immersion with trivial normal bundle
  whose spectral curve $\Sigma$ has finite genus.  As in
  Theorem~\ref{thm:F-map} we denote by $\hat{F}\colon T^2\times \Sigma\to
  \P^3$ the $T^2$--family of holomorphic curves with twistor projection
  $F\colon T^2\times\Sigma\to S^4$.  Then $\hat{F}$ extends to the
  compactification $\hat{F}\colon T^2\times \bar{\Sigma}\to\P^3$ and
  satisfies:
\begin{enumerate}
\item
$\hat{F}(p,-)\colon \bar{\Sigma}\to \P^3$ is  an algebraic curve for each $p\in T^2$.
\item
The original conformal immersion $f\colon T^2\to S^4$ is obtained by evaluation 
\[
f=F(-,o)=F(-,\infty)
\]
of the $T^2$--family at the points at infinity.
\item If the Willmore energy satisfies $\mathcal{W}(f)<8\pi$ then the map
  $\hat{F}\colon T^2\times\bar{\Sigma}\to \P^3$ is a smooth $T^2$--family of
  algebraic curves.
\end{enumerate}
\end{theorem}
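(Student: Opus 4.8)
The plan is to reduce the statement to a local extension problem across each of the two punctures $o$ and $\infty=\rho(o)$ of $\bar{\Sigma}$, carried out by a removable singularity argument with growth control supplied by the asymptotics of Theorem~\ref{thm:main1}. Recall that the prolonged kernel bundle $\hat{\mathcal{L}}\to\Sigma$ is already a holomorphic line subbundle of the trivial $\Gamma(\tilde V)$-bundle, so for fixed $p$ the map $\hat{F}(p,-)$ is holomorphic on $\Sigma$; the only issue is its behaviour at the two ends. Once I show that $\hat{F}(p,-)$ extends to a holomorphic map $\bar{\Sigma}\to\P^3$ from a compact Riemann surface, claim~(i) is immediate: a holomorphic map of a compact Riemann surface into $\P^3$ is cut out by a base-point-free linear system of meromorphic functions and is therefore an algebraic (indeed projective) curve by Chow's theorem.

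For the local extension near $o$ I would fix a holomorphic coordinate $z$ on $\bar{\Sigma}$ with $z(o)=0$ and $z\neq 0$ on $\Sigma$. By Theorem~\ref{thm:main1} the end at $o$ is asymptotic to the vacuum component $\exp(H^0(K))$, along which $\omega\in\Harm(T^2,\C)$ runs off to infinity inside a translate of $H^0(K)$, and the kernel of $D_{\omega}$ converges to the corresponding vacuum kernel $\ker\delbar_{\omega}$, spanned by a single Fourier mode. I would use this to choose, after a suitable $z$-dependent rescaling and gauge, a frame $\psi(z,-)\in\mathcal{L}_{x(z)}$ of the kernel bundle depending holomorphically on $z$ and extending across $z=0$ without vanishing. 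The prolongation $\hat\psi(z,-)\in\Gamma(\tilde V)$ of Lemma~\ref{l:prolong} then also depends holomorphically on $z$, and the heart of the extension is that, in this normalisation, the homogeneous components of $\hat\psi$ have at worst a pole at $z=0$ after removal of a common factor, so that the projective class $\hat{F}(p,z)=[\hat\psi(z,p)]$ extends holomorphically to $z=0$ rather than developing an essential singularity.

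To identify the boundary value and prove claim~(ii) I would exploit the explicit prolongation of Lemma~\ref{l:prolong}: writing a lift $\hat\psi_0$ with $\pi\hat\psi_0=\psi$, the prolongation is $\hat\psi=\hat\psi_0-\varphi$, where $\varphi\in\Gamma(\tilde L)$ is the unique solution of $\delta\varphi=\pi d\hat\psi_0$. As $x\to o$ the monodromy $h(x)=e^{\int\omega}$ blows up along $H^0(K)$, so the derivative term $\pi d\hat\psi_0$, and hence $\varphi$, dominates the bounded lift $\hat\psi_0$; projectively this forces $[\hat\psi]$ to converge to $[\varphi]\in\P(L_p)$, independently of the scaling chosen in the previous step. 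Since $\varphi\in\Gamma(\tilde L)$, twistor projection gives $F(p,o)=\varphi(p)\H=L_p=f(p)$. The value at the other end then follows without further analysis from the real structure: $\rho$ exchanges $o$ and $\infty$ and acts by right multiplication by $j$, so $F(p,\infty)=F(p,\rho(o))=F(p,o)=f(p)$, which is claim~(ii).

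Claim~(iii) then follows by combining this extension with the third part of Theorem~\ref{thm:F-map}: under $\W(f)<8\pi$ Lemma~\ref{l:emb_Darboux} guarantees that the sections in $\hat{\mathcal{L}}_x$ never vanish, so no bubbling occurs and $\hat F$ is already smooth in $p$ on $T^2\times\Sigma$; a uniform-in-$p$ version of the asymptotic estimates extends this smoothness across the two punctures, where the limit $f(p)$ is itself smooth. I expect the principal obstacle to be exactly the growth control in the second paragraph, namely proving that in the correct normalisation the prolonged kernel bundle has a meromorphic, not essentially singular, limit at each end, uniformly in $p\in T^2$. This is where the fine asymptotic analysis of the holomorphic family $D_{\omega}$ for large $\omega$ from \cite{ana} is indispensable, since a general holomorphic map from a punctured disc into $\P^3$ need not extend over the puncture.
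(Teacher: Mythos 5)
Your overall strategy is the paper's: work locally at the two punctures, show the prolonged kernel bundle has a limit there, identify that limit with the twistor lift of $f$, deal with the second end via the real structure $\rho^*\hat{\mathcal{L}}=\hat{\mathcal{L}}j$, and get algebraicity from compactness and Chow's theorem. But there is a genuine gap exactly where you flag one, and it cannot be closed by Theorem~\ref{thm:main1} alone: that theorem gives the asymptotic shape of the spectrum and generic one--dimensionality of $\ker D_\omega$, but it provides neither a holomorphically varying, nowhere--vanishing frame of $\mathcal{L}$ near the ends nor any pole--order control. The paper imports precisely these facts as Lemma~\ref{lem:ana2} from \cite{ana}: nowhere--vanishing holomorphic frames $\psi^x\in H^0_{h(x)}(\tilde{W})$ outside a compact set, and holomorphic extension through the punctures of the family of complex structures $S^x$ (defined by $S^x\psi^x=\psi^x i$) with boundary values $\pm J$. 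Your second paragraph --- ``the homogeneous components of $\hat\psi$ have at worst a pole at $z=0$ after removal of a common factor'' --- is exactly the content of the theorem, asserted rather than proved; and your third--paragraph dominance heuristic presupposes it: to rule out an essential singularity or oscillation of $[\hat\psi]$ one needs the expansion the paper quotes from (5.6) of \cite{ana}, namely that the gauged connection forms satisfy $\alpha^x=\sum_{k\geq -1}\alpha_k x^{-k}$ with a \emph{simple} pole whose residue $\alpha_{-1}\in H^0(K)$ is a nonzero holomorphic $1$--form on $T^2$.

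Two further points your dominance argument glosses over. First, claim (ii) only needs convergence in $\HP^1$, but the extension of $\hat F$ (needed for (i) and (iii)) requires knowing \emph{which} complex line inside $L_p$ the spans $\hat\psi^x\C$ converge to. The paper computes this: fixing a splitting $V=L\oplus L_0$, writing $L^x=(1+R^x)L_0$ and $S^x=(1+Y^x)J(1+Y^x)^{-1}$, one gets $(1+R^x)E^x=(1+\delta^{-1}(\eta^x-\eta_0))(1+Y^x)E_0$, which tends to $E=\delta^{-1}KE_0\subset L$, the $i$--eigenspace of $\tilde J$, i.e.\ the twistor lift of $f$; this identification uses $S^x\to\pm J$ from Lemma~\ref{lem:ana2}, not merely the blow--up of the monodromy. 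Second, the uniformity in $p\in T^2$ that you defer depends on the leading term being zero--free on $T^2$: this holds here only because $\alpha_{-1}$ is a nonzero holomorphic $1$--form on a torus, hence nowhere vanishing, and because the frames $\psi^x$ are zero--free near the ends --- again inputs from \cite{ana} rather than from Theorem~\ref{thm:main1}. So your outline points in the right direction and your identification of the boundary value (including the $\rho$--argument for the end $\infty$, and claim (iii) via Lemma~\ref{l:emb_Darboux}) matches the paper, but as written the proof defers its actual mathematical content to unspecified asymptotics; a complete argument must invoke, or reprove, the two cited results from \cite{ana}.
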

The proof  of this theorem is based on the following result from \cite{ana}: 
\begin{lemma}
  \label{lem:ana2} Let $W$ be a quaternionic line bundle of degree zero over a
  torus $T^2$ with complex structure $J$ and holomorphic structure $D$ whose
  spectral curve $\Sigma$ has finite genus.
\begin{enumerate}
\item Outside a sufficiently large compact set in $\Sigma$ the fibers
  $\mathcal{L}_x$ of the eigenline bundle $\mathcal{L}$ are spanned by a
  holomorphic family of nowhere vanishing holomorphic sections $\psi^x\in
  H^0_{h(x)}(\tilde{W})$.
\item Outside a sufficiently large compact set $K\subset \Sigma$ the
  holomorphic family of complex structures $S^x$ on $W$ given by
  $S^x\psi^x=\psi^x i$ extends holomorphically to $\bar{\Sigma}\setminus K$
  with $S^o= J$ and $S^{\infty}=- J$.
\end{enumerate}
\end{lemma}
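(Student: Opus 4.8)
The plan is to argue end-by-end. Since $\Sigma$ has finite genus it has exactly two ends, exchanged by $\rho$, one asymptotic to $\exp(H^0(K))$ and one to $\exp(\overline{H^0(K)})$, completed by the points $o$ and $\infty=\rho(o)$. By Theorem~\ref{thm:main1}(ii),(iii), outside a large compact set each end is, away from double points, a graph over a component of the vacuum spectrum along which $\dim\ker D_\omega=1$; there the kernel bundle $\mathcal{L}$ is a genuine holomorphic line bundle and we may choose, near $o$, a holomorphic trivializing family $\psi^x\in H^0_{h(x)}(\tilde W)$. The whole argument rests on comparing $D_\omega=\bar\partial_\omega+Q$ with the vacuum operator $\bar\partial_\omega$ as $\omega\to\infty$ along the end.

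For part (i), I would first recall from the homogeneous computation \eqref{eq:ker_D_omega}--\eqref{eq:spec_homogeneous} that the vacuum kernel over a point of $\exp(H^0(K))$ is spanned by a single Fourier mode $v\,e^{\int\eta}$ with $v\in\C^2$ and $\eta\in\Gamma^*$, which is nowhere vanishing on $T^2$. The analytic input imported from \cite{ana} is that, as $\omega\to\infty$ along the end, $Q$ becomes a relatively small perturbation and the actual eigensection $\psi^x$, suitably normalized, converges to such a vacuum mode. Upgrading this convergence to the $C^0$-topology by elliptic estimates for the elliptic family $D_\omega$ on the compact torus, $\psi^x$ is $C^0$-close to a nowhere-vanishing section and hence itself nowhere vanishing on $T^2$ for $x$ in a punctured neighbourhood of $o$; the same holds near $\infty$ by applying~$\rho$.

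Given part (i), the complex structure $S^x$ is well defined by $S^x\psi^x=\psi^x i$: since $\psi^x$ is a nowhere-vanishing section of the quaternionic line bundle $W$ it generates $W$ fibrewise, and right $\H$-linearity forces $S^x(\psi^x q)=\psi^x i q$. Note that $S^x$ is independent of the $\C_*$-scaling of $\psi^x$, because $i$ commutes with $\C$, so it depends only on $x$ and varies holomorphically. As $x\to o$ the normalized $\psi^x$ converge to the vacuum mode $\psi^o$, which lies in the $+i$-eigenbundle $W_+$ of $J$ (this fixes the labelling of the two ends); thus $J\psi^o=\psi^o i$, and since $S^o$ and $J$ agree on the generator $\psi^o$ we get $S^o=J$. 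As a field of complex structures $S^x$ is bounded, so by Riemann's removable-singularity theorem the holomorphic family extends holomorphically across the puncture with this limit. Finally, the compatibility $\rho^*\mathcal{L}=\mathcal{L}j$ lets us take $\psi^{\rho(x)}=\psi^x j$ near $\infty$; a short quaternionic computation using $ji=-ij$ gives $S^{\rho(x)}=-S^x$, whence $S^\infty=-S^o=-J$.

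The main obstacle is the asymptotic comparison of $D_\omega$ with the vacuum $\bar\partial_\omega$ for large $\omega$: one must control not merely the spectrum (the graph statement of Theorem~\ref{thm:main1}) but the eigensections themselves, uniformly and in a topology strong enough to conclude nowhere vanishing, together with a normalization making the limit a definite nonzero vacuum mode in the prescribed eigenbundle of $J$. This is precisely the analytic content drawn from \cite{ana}; once it is in hand, the definition of $S^x$, the identification $S^o=J$, and the propagation to the second end via $\rho$ are formal.
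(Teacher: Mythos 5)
There is no internal proof of Lemma~\ref{lem:ana2} to compare your attempt against: the paper quotes the lemma verbatim as a result from the companion paper \cite{ana}, and all of the analysis lives there. Your proposal is transparent about this, but the consequence is that it is not a proof of the statement: the ``analytic input imported from \cite{ana}'' --- that suitably normalized eigensections $\psi^x$ converge, uniformly along the end and in a topology at least as strong as $C^0$, to nowhere-vanishing vacuum modes lying in the prescribed eigenbundle of $J$ --- is precisely part (i) of the lemma, and in fact slightly more, since you also need the limit identified inside $W_+$ to get $S^o=J$ in part (ii). Theorem~\ref{thm:main1} controls only the spectrum (where the variety sits in $\Hom(\Gamma,\C_*)$), not the eigensections, so nothing available in the present paper closes that gap; your argument for (i) is essentially a restatement of what must be proved, and the genuine content --- perturbation estimates comparing $D_\omega=\bar\partial_\omega+Q$ with the vacuum family for $\omega\to\infty$ along the end, uniformly past the finitely many handles --- is exactly what \cite{ana} supplies.

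On the other hand, the formal superstructure you erect on the imported input is correct, and it spells out steps the paper leaves implicit: $S^x$ is well defined and independent of the $\C_*$-scaling of $\psi^x$ because $i$ centralizes $\C$; pointwise $S^x$ acts in a quaternionic frame as conjugation by a unit imaginary quaternion, so the family is automatically bounded in norm and a Banach-space-valued removable-singularity argument extends it across the puncture; and the computation $S^{\rho(x)}=-S^x$ from $\psi^{\rho(x)}=\psi^x j$ together with $ji=-ij$ correctly propagates $S^o=J$ to $S^\infty=-J$. Two refinements: your description of the vacuum kernel by a single Fourier mode $v\,e^{\int\eta}$, as in \eqref{eq:ker_D_omega}, is special to the homogeneous trivialization; for a general degree-zero $W$ the nowhere-vanishing of vacuum modes rests instead on the fact that a nontrivial holomorphic section (with monodromy) of the degree-zero complex line bundle $W_+$ or $\bar{W}_+$ can have no zeros for degree reasons, and this is the statement whose $C^0$-stability you actually invoke. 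Finally, you are right that the labelling of the two ends is conventional --- note that the paper itself later applies the lemma with $S^\infty=J$ and $S^o=-J$, opposite to the lemma's statement, so the sign bookkeeping deserves exactly the care you gave it.
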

Applying this lemma to the bundle $V/L$ induced by a conformal immersion is a
key ingredient of the proof of the above theorem.
\begin{proof}
  We have to show that for given $p\in T^2$ the holomorphic curve
  $\hat{F}(p,-)\colon \Sigma\to\P^3$ extends to the compactification
  $\bar{\Sigma}$. Let $L_0$ be a fixed Darboux transform of $f$ such that
  $V=L\oplus L_0$.  From Lemma~\ref{lem:ana2} we know that for $x\in\Sigma$
  near an end the fiber $\hat{\mathcal{L}}_x=\hat\psi^x\C$ of the prolonged
  kernel bundle $\hat{\mathcal{L}}$ is spanned by the prolongation
  $\hat{\psi^x}$ of a nowhere vanishing holomorphic section $\psi^x\in
  H^0_{h(x)}(\widetilde{V/L})$ with monodromy $h(x)$. Thus, we have a Darboux
  transform $L^x=\hat\psi^x\H\subset V$ for such $x\in\Sigma$. Since $\psi^x$
  is holomorphic the connection $\nabla^x$ on $V/L$ rendering $\psi^x$
  parallel is adapted, that is, $(\nabla^x)''=D$ is the holomorphic structure
  induced by $f$ on $V/L$.  Moreover, by Lemma~\ref{lem:ana2} the holomorphic
  family of complex structures $S^x$ given by $S^x\psi^x=\psi^x i$ extends
  holomorphically to the compactification $\bar{\Sigma}$ with $S^{\infty}=J$
  and $S^o=-J$, where $J$ is the complex structure \eqref{eq:conformal} on
  $V/L$ coming from the immersion $f$.

  We express $L^x=(1+R^x)L_0$ as a graph where $R^x\in\Gamma(\Hom(V/L,L))$.
  Then the adapted connection $\nabla^x$ is given by
\[
\nabla^x=\nabla_0+\delta R^x\,,
\]
where $\nabla_0$ is the adapted connection on $V/L$ induced by the splitting
$V=L\oplus L_0$ from Corollary~\ref{c:flat_adapted}. Denote by $\hat{\nabla}$
the unique flat connection on $V/L=E_0\oplus E_0$ compatible with $J$,
$\delbar$, and with unitary monodromy, where $E_0$ is the $i$--eigenspace of
$J$ on $V/L$. Then
\[
\nabla^x=\hat{\nabla}+\eta^x\quad\text{and}\quad \nabla_0=\hat{\nabla}+\eta_0
\]
and we have
\begin{equation}\label{eq:R}
\eta^x=\eta_0+\delta R^x\,.
\end{equation}
Now let $E^x\subset V/L\cong L_0$ denote the $i$--eigenspace of $S^x$.  Then
$(1+R^x)E^x=\hat{\psi^x}\C\subset L^x$ is the complex line bundle over $T^2$
spanned by $\hat{\psi^x}\in\hat{\mathcal{L}}_x$ over $x\in\Sigma$. We thus
have to show that as $x$ tends to $\infty$ the line bundle $(1+R^x)E^x$
approaches the twistor lift $E\subset L$ which is the $i$--eigenspace of the
complex structure $\tilde{J}$ on $L$ induced \eqref{eq:conformal} by the
conformal immersion $f$.  Note that since $\delta\in\Gamma(K\End_+(L,V/L))$ we
have $\delta E=KE_0$.

From $S^{\infty}=J$ we see that $S^x(p)\neq -J(p)$ for all $p\in T^2$ provided
$x$ is in a sufficiently small punctured neighborhood of
$\infty\in\bar{\Sigma}$. Thus, stereographic projection gives
\[
S^x=(1+Y^x)J(1+Y^x)^{-1}
\]
for a holomorphic family $Y^x\in\Gamma(\End_{-}(V/L))$ on that punctured
neighborhood.  The holomorphic family of flat adapted connections
\[
\nabla^x=(1+Y^x)\circ(\hat{\nabla}+\alpha^x)\circ(1+Y^x)^{-1}
\]
gauges under $1+Y^x$ to the holomorphic family of flat connections
$\hat{\nabla}+\alpha^x$ so that $\alpha^x$ is a $\hat{\nabla}$--closed form in
$\Omega^1(\End_+(V/L))$ .  Then 
\[
\eta^x=\frac{1}{1+|Y^x|^2}(\alpha^x +O(1))
\]
and it is shown in  (5.6) of \cite{ana} that
\[
\alpha^x=\sum_{k\geq -1}\alpha_k x^{-k}
\]
has a simple pole at $\infty$ with residue $\alpha_{-1}\in H^0(K)$ 
a non--zero holomorphic $1$--form on $T^2$. 
Observing that $E^x=(1+Y^x)E_0$,
we therefore obtain from \eqref{eq:R} that
\[
(1+R^x)E^x=(1+\delta^{-1}(\eta^x-\eta_0))(1+Y^x)E_0\,.
\]
But this last expression tends to 
\[
E=\delta^{-1}KE_0\subset L
\]
as $x$ tends to $\infty$, where we used that the only term which is not
bounded is given by the simple pole of $\alpha^x$ at $\infty$.
\end{proof}
It is precisely this theorem which will allow us to study conformal tori in
the $4$--sphere of finite spectral genus by algebro--geometric techniques. In
\cite{ana} it is shown that the Willmore energy of $f$ can be computed in
terms of residues of appropriate meromorphic forms on the compact curve
~$\bar{\Sigma}$. Moreover, the $T^2$--family of algebraic curves is in fact
linear as a map into the Jacobian of $\bar{\Sigma}$. But much more is true:
the Jacobian acts via flows, the Davey--Stewartson hierarchy, preserving the
Willmore energy of the conformal torus $f\colon T^2\to S^4$, and $f$ itself is
the flow tangent to the Abel image of $\bar{\Sigma}$ in its Jacobian.
\begin{appendix}
\section{}
\label{appendix}
In this appendix we summarize the basic notions concerning the theory
of quaternionic vector bundles over Riemann surfaces \cite{Klassiker}.
A quaternionic vector bundle $W$ with complex structure $J$ over a
Riemann surface $M$ decomposes into 
\begin{equation}
\label{eq:splitting}
W = W_+ \oplus W_-\,,
\end{equation}
 where
$W_\pm$ are the $\pm i$--eigenspaces of $J$. By restriction $J$
induces complex structures on $W_\pm$ and $W_- = W_+ j$ gives a
complex linear isomorphism between $W_+$ and $W_-$.  The degree of the
quaternionic bundle $W$ with complex structure $J$ is then defined to be
the degree of the underlying complex vector bundle
\begin{equation}
\label{eq:degree}
\deg W := \deg W_+\,,
\end{equation}
which is half of the usual degree of $W$ when viewed as a complex
bundle with $J$.

Given two quaternionic bundles $W$ and $\hat{W}$ with complex
structures $J$ and $\hat{J}$ the complex linear homomorphisms
$\Hom_+(W,\hat{W})$ are complex linearly isomorphic to
$\Hom_\C(W_+,\hat{W}_{+})$. On the other hand, the complex antilinear
homomorphisms $\Hom_-(W, \hat{W})$ are complex linearly isomorphic to
$\Hom_+(\bar{W}, \hat{W})$, where the complex structure on a
homomorphism bundle is induced by the target complex structure.

A {\em quaternionic holomorphic} structure on a quaternionic vector
bundle $W$ with complex structure $J$ is given by a quaternionic
linear operator
\begin{equation}
\label{eq:hol_structure}
D\colon \Gamma(W)\to\Gamma(\bar{K}W)
\end{equation}
satisfying the Leibniz rule $D(\psi\lambda) = (D\psi)\lambda + (\psi
d\lambda)''$, where $\psi\in\Gamma(W)$ and $\lambda\colon M \to \H$.  Note
that $(d\lambda)''$ cannot be defined since $\H$ does not have a preferred
complex structure, whereas $(\psi d\lambda)''$ is well--defined as the
$\bar{K}$--part of a $W$--valued 1--form.  The decomposition of $D$ into $J$
commuting and anticommuting parts gives
\begin{equation}
\label{eq:hol_structure_decomp}
D = \delbar  + Q\,.
\end{equation}
Here $\delbar$ is the double of a complex holomorphic structure on the complex
bundle $W_+$ and $Q\in\Gamma(\bar{K}\End_{-}(W))$ is a $1$--form of type
$\bar{K}$ with values in complex antilinear endomorphisms of $W$.

The quaternionic vector space of holomorphic sections of $W$ is
denoted by
\begin{equation}
\label{eq:hol_sections}
H^{0}(W)= \ker (D)
\end{equation}
and has finite dimension $h^0(W):=\dim H^0(W)$
for compact $M$.  The $L^2$--norm
\begin{equation}
\label{eq:Willmore_energy_holbundle}
\mathcal{W}(W)= \mathcal{W}(W, D) = 2\int_M <Q\wedge *Q>
\end{equation}
of $Q$ is called the {\em Willmore energy} of the holomorphic bundle $W$,
where $<\,,\,>$ denotes the trace pairing on $\End(W)$. The special case
$Q=0$, for which $\mathcal{W}(W)=0$, describes (doubles of) complex
holomorphic bundles $W=W_+\oplus W_+$.  A typical example of a quaternionic
holomorphic structure arises from the $\bar{K}$--part $\nabla''$ of a
quaternionic connection $\nabla$ on $W$.

For analytic considerations, and to make contact to the theory of Dirac
operators with potentials, it is common to regard $W$ as a complex vector
bundle whose complex structure $I$ is given by right multiplication by the
quaternion $i$.  Thus, $W = W_+ \oplus \overline{W}_+$ so that $Q$ is
$I$--linear and $D = \delbar+ Q$ has the decomposition
\begin{equation}
\label{eq:hol_structure_I}
D = \begin{pmatrix} \delbar & 0\\
     0 & \partial 
     \end{pmatrix} + \begin{pmatrix} 0 &-\bar{q} \\  q& 0
    \end{pmatrix}
\end{equation}  
with $q\in\Gamma(K\Hom(W_+,\overline{W}_+))$.

Given a \emph{linear system} of a holomorphic bundle $W$, that is, a
linear subspace $H\subset H^0(W)$, we have the Kodaira map 
\begin{equation}
\label{eq:Kodaira}
\ev^*\colon  W^*\to H^*\,.
\end{equation}
Here $\ev$ denotes the bundle map which, for $p\in M$, evaluates holomorphic
sections at~$p$. In case the Kodaira map is injective, its image $W^*\subset
H^*$ defines a map into the Grassmannian $f\colon M \to \text{Gr}(r, H^*)$,
where $\rank W = r$.  The derivative $\delta=\pi d|_{W^*}$ of this map
satisfies $*\delta= \delta J$, where $d$ denotes the trivial connection on
$H^*$ and $\pi\colon H^* \to H^*/W^*$ denotes the canonical projection. In
other words, $f$ is a holomorphic curve \cite{Klassiker}.

If $M$ is a compact Riemann surface of genus $g$ and $L$ is a holomorphic line
bundle over $M$, the Pl\"ucker formula \cite{Klassiker} gives a lower bound
for the Willmore energy $\W(L)$ of $L$ in terms of the genus $g$ of $M$, the
degree of $L$, and vanishing orders of holomorphic sections of $L$.  For the
purposes of this paper, we need a more general version of the Pl\"ucker
formula which includes linear systems with monodromy.  If $\tilde M$ is the
universal cover of $M$, we denote by $\tilde L$ the pullback to $\tilde M$ of
$L$ by the covering map. The holomorphic structure of $L$ lifts to make
$\tilde L$ into a holomorphic line bundle.

A \emph{linear system with monodromy} is a linear subspace $H\subset
H^0(\tilde L)$ of holomorphic sections of $\tilde L$ such that
\begin{equation}
\label{eq:system_monodromy}
\gamma^*H=H \quad \text{ for all } \quad \gamma\in\pi_1(M)\,,
\end{equation}
where $\pi_1(M)$ acts via deck transformations.  Adapting the proof in
\cite{Klassiker} by replacing the trivial connection with a flat
connection on $H^*$, we obtain the Pl\"ucker formula for an
$n$--dimensional linear system $H$ with monodromy
 \begin{equation}
\label{eq:Pluecker}
\tfrac{1}{4\pi}\W(L)\ge n((n-1)(1-g) - \deg L) + \ord H\,.  
\end{equation}
The order $\ord H$ of the linear system $H\subset H^0(\tilde L)$ with
monodromy is computed as follows: to a point $x\in\tilde M$ we assign
the Weierstrass gap sequence $n_0(x)<\ldots <n_{n-1}(x)$ inductively
by letting $n_k(x)$ be the minimal vanishing order strictly greater
than $n_{k-1}(x)$ of holomorphic sections in $H$. Away from isolated
points this sequence is $n_k(x)=k$ and
\[
\ord_x H = \sum_{k=0}^{n-1}( n_k - k )
\]
measures the deviation from the generic sequence. 
Since $H$ is a linear system with mo\-no\-dro\-my the Weierstrass gap
sequence is invariant under deck transformations. Therefore, $\ord_pH$
for $p\in M$ is well--defined, zero away from finitely many
points, and  the order of the linear system $H$ is given by
\begin{equation}
\label{eq:ord_H}
\ord H = \sum_{p\in M}\ord_pH\,.
\end{equation}
For a linear system $H\subset H^0(L)$ without monodromy the formula
(\ref{eq:Pluecker}) is the usual Pl\"ucker relation as discussed in
\cite{Klassiker}.
\end{appendix}

\end{document}